\newtheorem{theorem}{Theorem}[section]
\newtheorem{cor}{Corollary}[section]
\newtheorem{prop}{Proposition}[section]
\newtheorem{defn}{Definition}[section]
\newcommand{\Oo}{\mathcal{O}}
\newcommand{\gc}{\mathfrak{g}}
\newcommand{\hc}{\mathfrak{h}}
\newcommand{\pc}{\mathfrak{p}}
\newcommand{\qc}{\mathfrak{q}}
\newcommand{\lc}{\mathfrak{l}}
\newcommand{\kc}{\mathfrak{k}}
\newcommand{\ssc}{\mathfrak{s}}
\newcommand{\nc}{\mathfrak{n}}
\newcommand{\Sl}{\mathfrak{sl}(2)}
\newcommand{\SL}{SL(2, \mathbb{C})}
\newcommand{\la}{\langle}
\newcommand{\ra}{\rangle}
\newcommand{\Ad}{\texttt{Ad}}
\newcommand{\ad}{\texttt{ad}}
\newcommand{\N}{\mathcal{N}}
\newcommand{\W}{\textbf{W}}
\newcommand{\Pp}{\mathcal{P}}
\newcommand{\Aq}{\mathcal{A}}
\newcommand{\Rr}{\mathcal{R}}
\newcommand{\Ii}{\mathcal{I}}
\newcommand{\Jj}{\mathcal{J}}
\newcommand{\Bb}{\mathcal{B}}
\newcommand{\Xx}{\mathcal{X}}
\newcommand{\Zz}{\mathcal{Z}}
\newcommand{\Ss}{\mathcal{S}}
\newcommand{\as}{\alpha}
\newcommand{\bs}{\beta}
\newcommand{\dlam}{\lambda^\vee}
\newcommand{\gr}{\text{gr}}
\newcommand{\GC}{\textbf{G}}
\newcommand{\EG}{^\Gamma\!\GC^\vee}
\newcommand{\ExG}{^\Gamma\!\GC}
\newcommand{\KC}{\textbf{K}}
\newcommand{\GR}{\GC(\mathbb{R})}
\newcommand{\HC}{\textbf{H}}
\newcommand{\HR}{\HC(\mathbb{R})}
\newcommand{\BC}{\textbf{B}}
\newcommand{\LC}{\textbf{L}}
\newcommand{\LR}{\LC(\mathbb{R})}
\newcommand{\PC}{\textbf{P}}
\newcommand{\QC}{\textbf{Q}}
\newcommand{\NC}{\textbf{N}}
\newcommand{\CC}{\textbf{C}}
\newcommand{\AVC}{AV_\mathbb{C}}
\newcommand{\AVR}{AV_\mathbb{R}}
\newcommand{\AVT}{AV_\theta}
\newcommand{\Cc}{\mathcal{C}}
\newcommand{\RWG}{W_\mathbb{R}} 
\newcommand{\KGB}{\texttt{KGB}}
\newcommand{\Int}{\text{Int}}
\newcommand{\Aut}{\text{Aut}}
\newcommand{\Out}{\text{Out}}
\newcommand{\Gal}{\text{Gal}}
\newcommand{\CMc}{MR1251060}
\newcommand{\Carter}{MR1266626}
\newcommand{\ABV}{MR1162533}
\newcommand{\AVReal}{MR1168491}
\newcommand{\VoganOrange}{MR908078}
\newcommand{\Algo}{MR2485793}
\newcommand{\Yama}{yamashita1994}
\title{Special Unipotent Arthur Packets for Real Reductive Groups}
\author{Jonathan Fernandes}
\begin{document}
\bibliographystyle{alpha}
\maketitle
\tableofcontents

\section{Introduction}
Suppose $\GC$ is a complex connected algebraic Lie group. Studying the representation theory of the real forms of $\GC$ has been a major focus over past few decades. Fix a real form $\GR$ of $\GC$ and let $\Pi(\GR)$ be the set of irreducible representations of $\GR$. This set has been completely classified by results of Langlands, Harish-Chandra, Vogan, Knapp, Zuckerman etc, and is called the Langlands classification. Let $\Pi_h(\GR))$ be the set of irreducible representations of $\GR$ equipped with an $\GR$-invariant hermitian form, results of Knapp and Zuckerman completely classify this set in terms of a certain condition on the Langlands parameters. Finally, let $\Pi_u(\GR)$ be the set representations of $\GR$ equipped with a positive definite $\GR$-hermitian form. Clearly, $\Pi_u(\GR)\subset \Pi_h(\GR)$, and one would assume that the classification of $\Pi_h(\GR)$ would somehow provide a way to classify $\Pi_u(\GR))$ in $\Pi_h(\GR))$. Classification of $\Pi_u(\GR)$ has been a ambitious process, the closest that we have arrived, is at identifying $\Pi_u(\GR)$ as a subset of $\Pi_h(\GR)$ using the Atlas Software, a product of Atlas of Lie Groups project, a collaboration between a wide network of mathematicians led by Jeffrey Adams, David Vogan, Marc Van Leuven, etc. 
\\

Vogan laid out a program to classify $\Pi_u(\GR)$ in the book \cite{\VoganOrange}, which basically comes down to classifying certain unitary representations, called unipotent representations, of Levi subgroups of $\GR$, and using these to induce up to unitary representations of $\GR$. There are two steps involved in this program: 

\begin{enumerate}
\item given a real reductive group $\HR$, classify the unipotent representations attached to $\HR$.
\item given a real group $\GR$ and a unitary representation $\pi$ of $\GR$, show that $\pi$ is induced from a unipotent representation of some Levi subgroup $\LR$ of $\GR$. 
\end{enumerate}

We will only be concerned with step (1) of this program, in fact we only work out a very special case of this step. There is no unique definition of unipotent representations, in fact Vogan provides at least five different approaches towards defining unipotent representations. We will be concerned with what are known as Arthur's unipotent representations which come in packets correponding to a special unipotent Arthur parameter.  
\\
The goal of this paper is to compute special unipotent Arthur packets. In the process of this computation, we compute a few invariants for representations of $\GR$, called associated varieties. These computations involve understanding representation theory in two different view points, firstly the standard approach via Langlands classification and secondly in terms of the ``Atlas setting" which is suitable for computer implementation. Furthermore, there is fair amount of background involving nilpotent orbits and associated varieties. 
\\

This paper is organized as follows:
\begin{enumerate}
\item we begin with outlining basic definitions and results about nilpotent orbits that are relevant to us. 
\item we recall the Springer correpondence and description of Weyl group representations.
\item we then describe the setting in terms of standard/traditional approach.
\item we introduce the Atlas setting, describe how representations are classified in the setting and the some structure theory of parabolics.
\item we then introduce associated varieties for $\GR$, describe these for some important class of representations. 
\item In Section 6, leads to first important algorithm and computation of theta forms of an even nilpotent orbit. 
\item  In Section 7, we introduce unipotent Arthur representations and packets, it ends with an algorithm to compute these packets in certain cases. 
\item Section 8 provides an application of results proved in Section 6 and 7. 
\end{enumerate}
The expert reader can safely jump right to Section 6 through 8, where lies the bulk of the novelty and the main results and arguments of this paper. 
\\

The results presented in this paper are part of the doctoral thesis work of the author under the supervision of Jeffrey Adams, who suggested the problem and has been a source of immense encouragement and help to the author. We thank Peter Trapa whose notes on real forms of nilpotent orbits motivated the results in Section 6. The author is thankful to David Vogan for laying the foundations for almost everything that we use in this paper. We are also thankful to Michael Rapoport, Dipendra Prasad, Thomas Haines, Xuhua He, Jonathan Rosenberg and Patrick Brosnan for providing insightful feedback on a preliminary version of the results of this paper. Finally, special thanks to the Atlas of Lie Groups project, especially Marc Van Leuwen for adding the scripting functionality into the Atlas software which has played a pivotal role in the development of this paper.

\section{Nilpotent Orbits}
Let $\GC$ be a complex reductive group, with complex Lie algebra $\gc$. Fix a real form $\GR$ of $\GC$ and let $\gc_\mathbb{R}$ be the corresponding real Lie algebra. Let $\KC$ be the complexfication of the maximal compact subgroup of $\GR$ and let $\theta$ be the Cartan involution so that $\GC^\theta = \KC$. Fix $\HC \subset \GC$, a Cartan subgroup and let 

\begin{eqnarray}
X^*(\HC) &=& \{ \text{The lattice of rational characters (into $\mathbb{C}$) of $\HC$.}  \}\\	\notag
 X_*(\HC) &=& \{ \text{The lattice of one parameter subgroups of $\HC$.}  \} 
\end{eqnarray}
so there is a natural pairing
\begin{equation}
\la\; ,\; \ra: X^*(\HC)\times X_*(\HC) \longrightarrow \mathbb{Z}. 
\end{equation}
Using the following natural isomorphisms,
\begin{equation} \label{lattice_isom_cartan}
 \hc \simeq X_*(\HC)\otimes_\mathbb{Z} \mathbb{C}, \quad \hc^*\simeq X^*(\HC)\otimes_\mathbb{Z} \mathbb{C},
\end{equation}
where $\hc^*$ is the vector space dual of $\hc$, we can extend the pairing to 
\begin{equation}\label{bilinear_pairing}
\la\; ,\; \ra: \hc^*\times \hc \longrightarrow \mathbb{C}. 
\end{equation}

Now, fix a set of roots $\Delta(\gc, \hc)$ for $\gc$ and let $\Pi(\gc, \hc)$ be a choice of simple positive roots. Let $\Delta^\vee(\gc, \hc)$ and $\Pi^\vee(\gc, \hc)$ be the corresponding set of coroots and simple coroots. The set of weights for $\GC$ is defined as 
\begin{equation}
 P(\GC) := \{ \lambda \in  X^*(\HC)\otimes_\mathbb{Z} \mathbb{C} : \; \langle \lambda, \as^\vee\rangle \in \mathbb{Z} \;\; \text{for all}\;\; \as \in \Delta \}. 
\end{equation}

Also, the co-weights for $\GC$ are defined as
\begin{equation}
P^\vee (\GC) := \{ \lambda^\vee \in  X_*(\HC)\otimes_\mathbb{Z} \mathbb{C} : \; \langle \as, \lambda^\vee\rangle \in \mathbb{Z} \;\; \text{for all}\;\; \as \in \Delta \}. 
\end{equation}

We can identify $2\pi i \; X_*(\HC)$ with the kernel of the exponential map $\text{exp}: \hc \longrightarrow \HC$, under this identification we have 
\begin{equation}
P^\vee (\GC) = \{ \lambda^\vee \in \hc : \; \text{exp}(2\pi i \;\lambda^\vee)\in Z(\GC)\},
\end{equation}
where $Z(\GC)$ is the center of $\GC$. 
Also,
\begin{equation}
P(\GC) = \{ \lambda \in \hc^* : \; \text{exp}(2\pi i \;\lambda)\in Z(\GC^\vee)\},
\end{equation}
where $Z(\GC^\vee)$ is the center of the complex connected dual group $\GC^\vee$. 
\\

We outline some facts about nilpotent adjoint and coadjoint orbits for $\GC$, additional details can be found in ~\cite{\CMc}. The group $\GC$ acts on $\gc$ via the adjoint action
\begin{equation}
\Ad: \GC\longrightarrow \text{End}(\gc), \quad g\mapsto \Ad(g). 
\end{equation}
An element $X \in \gc$ is called nilpotent if there exist a $k\in \mathbb{N}$ such that $\ad(X)^k=0$. The set of all nilpotent elements in $\gc$ is called the nilpotent cone and is denoted as $\N$. 

\begin{defn}[Nilpotent Orbit]
A nilpotent orbit in $\gc$ is an orbit in $\N$ under the $\Ad$ action of $\GC$.
\end{defn}

If $X\in \gc$ is a nilpotent element, then we write $\Oo_X:=\Ad(G)\cdot X$ for the nilpotent orbit in $\gc$. 

\begin{theorem}[Jacobson-Morozov]\label{J-M1}
Suppose $\gc$ is a complex reductive Lie algebra. Let $X$ be a non-zero nilpotent element in $\gc$. Then, there exist $H\in \hc$ (semisimple) and $Y\in \gc$ (nilpotent) such that
\begin{equation}
[H,X]=2X \quad,  [H,Y]=-2Y \quad \text{and} \quad [X,Y]=H,
\end{equation}
where the bracket is the Lie algebra bracket in $\gc$. 
\end{theorem}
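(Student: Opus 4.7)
The plan is to build the $\Sl$-triple in three stages. First, produce some element $H_0 \in \gc$ (possibly not semisimple) with $[H_0, X] = 2X$. Second, replace $H_0$ by its semisimple Jordan part to obtain $H$. Third, construct $Y$ in the $(-2)$-eigenspace of $\ad(H)$ with $[X, Y] = H$. Since the reductive Lie algebra $\gc$ splits as $Z(\gc) \oplus [\gc,\gc]$ and the nilpotent $X$ lies in $[\gc,\gc]$, one may restrict to the semisimple part and invoke the non-degenerate Killing form $B$ throughout.

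Stage 1 reduces to showing $X \in \ad(X)(\gc)$. Invariance of $B$ makes $\ad(X)$ skew-adjoint, so $(\mathrm{Im}\,\ad(X))^{\perp} = \ker\,\ad(X)$, and it therefore suffices to verify $B(X, Z) = 0$ whenever $[X, Z] = 0$. In that case $\ad(X)$ and $\ad(Z)$ commute, their composite is nilpotent (as $\ad(X)$ is), and $B(X, Z) = \mathrm{tr}(\ad(X)\ad(Z)) = 0$, producing $H_0$. For Stage 2, write $H_0 = H + N$ via Jordan--Chevalley with $H$ semisimple, $N$ nilpotent, and $[H, N] = 0$; then $\ad(H)$ is the semisimple part of $\ad(H_0)$, so its eigenspaces coincide with the generalized eigenspaces of $\ad(H_0)$. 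Since $X$ is a genuine $2$-eigenvector of $\ad(H_0)$, it is also a $2$-eigenvector of $\ad(H)$, giving $[H, X] = 2X$ with $H$ semisimple.

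Stage 3 is the main obstacle. Using the semisimplicity of $\ad(H)$, decompose $\gc = \bigoplus_{\lambda} \gc_\lambda$ into $\ad(H)$-eigenspaces, so $X \in \gc_2$ and $\ad(X)\colon \gc_\lambda \to \gc_{\lambda+2}$. Invariance of $B$ forces non-degenerate pairings $\gc_\mu \times \gc_{-\mu} \to \mathbb{C}$, and under these pairings the map $\ad(X)\colon \gc_{-2} \to \gc_0$ is the transpose, up to sign, of $\ad(X)\colon \gc_0 \to \gc_2$. Consequently, the existence of $Y \in \gc_{-2}$ with $[X, Y] = H$ is equivalent to the vanishing $B(H, Z) = 0$ for every $Z \in \gc_0$ satisfying $[X, Z] = 0$. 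Establishing this vanishing is the crux: one expands $B(H, Z) = \sum_\lambda \lambda \, \mathrm{tr}(\ad(Z)|_{\gc_\lambda})$ and exploits that $\ad(X)$ and $\ad(Z)$ commute on the graded pieces, together with the $B$-duality between $\gc_\lambda$ and $\gc_{-\lambda}$, to cancel contributions from opposite graded pieces. Once $Y$ is produced, the relations $[H, X] = 2X$, $[H, Y] = -2Y$, and $[X, Y] = H$ assemble into the desired triple; the hard part will be the final trace cancellation in Stage 3, whereas Stages 1 and 2 are routine once the skew-adjointness of $\ad(X)$ is in hand.
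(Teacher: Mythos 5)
The paper states the Jacobson--Morozov theorem without proof, citing it as a classical result (from Collingwood--McGovern), so there is no ``paper's proof'' to compare against. Your Stages~1 and~2 are the standard opening moves: the skew-adjointness of $\ad(X)$ under the invariant form yields $(\mathrm{Im}\,\ad(X))^{\perp}=\ker\ad(X)$, the trace computation $\mathrm{tr}(\ad(X)\ad(Z))=0$ for $Z$ centralizing $X$ gives $X\in\mathrm{Im}\,\ad(X)$ and hence $H_0$, and the Jordan--Chevalley decomposition of $H_0$ correctly produces a semisimple $H$ still satisfying $[H,X]=2X$. These steps are sound. Likewise, your reformulation of Stage~3 — that finding $Y\in\gc_{-2}$ with $[X,Y]=H$ is equivalent to $B(H,Z)=0$ for all $Z\in\gc_0$ with $[X,Z]=0$ — is correct.

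The gap is in the mechanism you propose for establishing that vanishing. You write $B(H,Z)=\sum_{\lambda}\lambda\,\mathrm{tr}(\ad(Z)|_{\gc_\lambda})$ and assert that the $B$-duality between $\gc_\lambda$ and $\gc_{-\lambda}$ makes ``contributions from opposite graded pieces cancel.'' But invariance of $B$ gives $\ad(Z)|_{\gc_{-\lambda}}$ as the \emph{negative} transpose of $\ad(Z)|_{\gc_\lambda}$, hence $\mathrm{tr}(\ad(Z)|_{\gc_{-\lambda}})=-\mathrm{tr}(\ad(Z)|_{\gc_\lambda})$. Consequently
\[
\lambda\,\mathrm{tr}(\ad(Z)|_{\gc_\lambda})+(-\lambda)\,\mathrm{tr}(\ad(Z)|_{\gc_{-\lambda}})
   =2\lambda\,\mathrm{tr}(\ad(Z)|_{\gc_\lambda}),
\]
so opposite pieces reinforce rather than cancel. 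Worse, nothing in this computation uses the hypothesis $[X,Z]=0$, which is exactly what must enter: the constraint is on the centralizer of $X$, and the grading by $\ad(H)$ alone cannot see it. The classical route here is not a symmetric cancellation of graded traces but Morozov's lemma, which requires showing $H\in[X,\gc]$; the argument then lives inside the centralizer $\ker\ad(X)$ and uses that $\ad(H)$ preserves the filtration by $\ker\ad(X)^k$ and that $\ad(H)+2$ is invertible on $\ker\ad(X)$ (so an initial $Y_0$ with $[X,Y_0]=H$ can be corrected by an element of $\ker\ad(X)$ to satisfy $[H,Y]=-2Y$). Alternatively one argues by induction on $\dim\gc$, reducing to the case of a distinguished nilpotent. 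You should replace the heuristic ``opposite pieces cancel'' with one of these arguments; as written, Stage~3 does not go through.
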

The set $\{X, H, Y\}$ is called a standard $\mathfrak{sl}(2)$-triple, and $X$ is called its nilpositive element. Suppose $\mathcal{A}$ is the set of $\Ad(\GC)$-conjugacy classes of $\mathfrak{sl}(2)$-triples in $\gc$, then, we can define a map
\begin{equation}
\Omega: \mathcal{A}\longrightarrow \{\text{nilpotent orbits}\}\;\; ; \;\; \Omega(\{X,H,Y\})=\Oo_X.
\end{equation} 
The map $\Omega$ is bijective. 
We will conjugate the triple so that the semisimple element $H$ of the triple is dominant with respect to $\Pi(\gc, \hc)$. Furthermore, $H$ can be chosen such that it belongs to $P^\vee(\GC)$. 
\\
Using the bilinear pairing in Equation \ref{bilinear_pairing} we can label the nodes of the Dynkin diagram for $\gc$ by the integer $\langle\alpha, H\rangle$. Such a diagram is called a labelled Dynkin diagram and we denote it by $\mathcal{D}_H$. If $H$ is the semisimple element of a standard $\mathfrak{sl}(2)$-triple, then, using $\mathfrak{sl}(2)$-representation theory one can show that labels for the Dynkin diagram can only be one of either $0, 1$ or $2$. Let $\mathfrak{D}$ be the set of labelled Dynkin diagrams corresponding to standard $\mathfrak{sl}(2)$-triples. 

\begin{defn}[Even Nilpotent Orbits]
Let $\Oo$ be a nilpotent orbit for $\GC$ and let $\{X, Y, H\}$ be the corresponding Jacobson-Morosov triple. We say $\Oo$ is an even nilpotent orbit if any one of the following equivalent conditions hold. 
\begin{enumerate}
\item all the nodes of the labelled Dynkin diagram $\mathcal{D}_H$ are even (i.e. either 0 or 2). 
\item $\frac{1}{2} H \in P^\vee(\GC)$. 
 \end{enumerate}
\end{defn}

When $\gc$ is reductive Lie algebra of classical type, there is a classification of nilpotent orbits in terms of partitions. We refer the reader to [\cite{\CMc}, Theorem 5.1.1-5.1.4] for details of this classification.

\subsection{Induction of Nilpotent Orbits}
Many nilpotent orbits in $\gc$ can be induced from nilpotent orbits on subalgebras of $\gc$. We introduce some ideas (relevant to us) regarding induction of nilpotent orbits. Most of the details can be found in Chapter 7 of \cite{\CMc}.  
\\

Let $\pc=\lc+\nc$ be a parabolic subalgebra in $\gc$. Let $\PC$ be the corresponding parabolic subgroup in $\GC$. Suppose $\Oo_\lc$ is a nilpotent orbit in $\lc$. We have the following result.

\begin{theorem}[\cite{\CMc}, Theorem 7.1.1]
As in the notation above, recall that, $\Ad(\PC)$ is a connected subgroup of $\Ad(\GC)$ with Lie algebra $\pc$. There there is a unique nilpotent orbit $\Oo_\gc$ in $\gc$ meeting $\Oo_\lc+\nc$ in an open dense set. The intersection $\Oo_\gc \cap (\Oo_\lc+\nc)$ consists of a single $\Ad(\PC)$-orbit. The orbit $\Oo_\gc$ above will called the induced orbit from $\Oo_\lc$ and will be denoted as
$$\Oo_\gc=\text{Ind}_\pc^\gc(\Oo_\lc).$$
\end{theorem}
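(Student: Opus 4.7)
The plan is to exploit irreducibility of $\Oo_\lc + \nc$, the finiteness of nilpotent orbits in $\gc$, and a dimension count.

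First I would verify the three basic structural facts about the subset $S := \Oo_\lc + \nc \subset \gc$. (i) $S$ is irreducible: it is the image of the morphism $\Oo_\lc \times \nc \to \gc$ given by addition, which is even an isomorphism of varieties onto $S$ since $\lc \cap \nc = 0$. In particular $\dim S = \dim \Oo_\lc + \dim \nc$. (ii) Every element of $S$ is nilpotent in $\gc$: writing $X = X_\lc + X_\nc$, one picks a cocharacter $\lambda^\vee$ of the center of $\LC$ that contracts $\nc$ to $0$; then $\lim_{t\to 0}\Ad(t^{\lambda^\vee})X = X_\lc$, so $X_\lc \in \overline{\Ad(\GC)\cdot X}$, and since $X_\lc$ is nilpotent and $\ad X$ preserves a suitable $\PC$-stable filtration, $X$ itself is nilpotent. (iii) $\PC$ stabilises $S$: for $p = \ell n \in \LC \ltimes \NC$, conjugation by $n$ preserves $\nc$ and modifies $X_\lc$ only by an element of $\nc$, while $\Ad(\ell)$ sends $X_\lc \in \Oo_\lc$ into $\Oo_\lc$ and $\nc$ into $\nc$.

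Next, I would extract the unique dense $\GC$-orbit. Since $\N$ has only finitely many $\Ad(\GC)$-orbits, the irreducible variety $S$ is a finite union $\bigsqcup_i (\Oo_i \cap S)$ of locally closed subsets indexed by those $\GC$-orbits $\Oo_i$ meeting $S$; exactly one of these intersections, call it $\Oo_\gc \cap S$, is open and dense in $S$, and any other nilpotent orbit intersecting $S$ does so in a proper closed subvariety. This immediately gives the uniqueness assertion.

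Finally I would show $\Oo_\gc \cap S$ is a single $\Ad(\PC)$-orbit by a dimension count. Using the standard formula for an induced orbit, $\dim \Oo_\gc = \dim \Oo_\lc + 2\dim \nc$, together with $\dim \gc = \dim \lc + 2\dim \nc$, one computes $\dim \GC^Y = 2\dim \lc - \dim \Oo_\lc$ for $Y \in \Oo_\gc$. Picking $Y$ in the open dense subset $\Oo_\gc \cap S$ and noting that $\GC^Y \cap \PC$ surjects onto the $\LC$-stabiliser of $Y_\lc$ modulo a subgroup of $\NC$, one obtains $\dim \PC^Y = \dim \LC^{Y_\lc} = \dim \lc - \dim \Oo_\lc$, hence $\dim \Ad(\PC)\cdot Y = \dim \lc + \dim \nc - (\dim \lc - \dim \Oo_\lc) = \dim \Oo_\lc + \dim \nc = \dim S$. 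Since $\Ad(\PC) \cdot Y$ is irreducible, locally closed, and contained in the irreducible $S$ of the same dimension, it is open in $S$. Thus any two such orbits would both be open dense in $S$ and so must coincide, proving $\Oo_\gc \cap S$ is a single $\Ad(\PC)$-orbit.

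The main obstacle I anticipate is the dimension count in the last step, specifically verifying that the induced-orbit dimension formula $\dim \Oo_\gc = \dim \Oo_\lc + 2\dim \nc$ holds, rather than just $\leq$; this really requires exhibiting an element of $S$ whose $\GC$-orbit has this dimension, and is typically done by showing that the $\PC$-orbit calculation above forces equality in the chain of dimension inequalities. Everything else is a fairly clean application of algebraic-group actions on affine varieties.
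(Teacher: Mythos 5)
The paper does not prove this theorem: it is stated verbatim with a citation to Collingwood--McGovern, so there is no in-paper proof to compare against, and what follows evaluates your attempt on its own merits.

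Your outline through the extraction of the unique dense $\GC$-orbit --- irreducibility of $S = \Oo_\lc + \nc$, nilpotency of its elements, $\PC$-stability of $S$, and the use of finiteness of nilpotent orbits --- is correct and is essentially the standard Lusztig--Spaltenstein argument reproduced in \cite{\CMc}. (For nilpotency the limit argument is a slight detour; it suffices that $\ad(X)$ preserves the filtration $F_k = \bigoplus_{m \ge k}\gc_m$ coming from a cocharacter cutting out $\pc$, and that the induced map on the associated graded is $\ad(X_\lc)$, which is nilpotent.)

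The dimension count, however, has a genuine circularity which you flag, plus a secondary gap which you do not. You invoke $\dim\Oo_\gc = \dim\Oo_\lc + 2\dim\nc$ as an external input, but this identity is exactly what the existence-and-uniqueness statement you are trying to prove delivers; it cannot be assumed. (There is also a small slip: with that formula one gets $\dim\GC^Y = \dim\lc - \dim\Oo_\lc$, not $2\dim\lc - \dim\Oo_\lc$, though the error does not propagate into the next line.) The secondary gap is the assertion $\dim\PC^Y = \dim\LC^{Y_\lc}$. Projecting $\PC^Y$ to $\LC$ certainly lands inside $\LC^{Y_\lc}$, but neither surjectivity onto $\LC^{Y_\lc}$ nor finiteness of the kernel $\NC\cap\GC^Y$ is automatic, and you need both for that equality of dimensions.

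The standard way to close both gaps at once is to bypass the stabiliser computation: consider the proper map $\mu : \GC\times_\PC(\overline{\Oo_\lc}+\nc) \to \gc$, whose image is closed, irreducible, contained in $\N$, and hence the closure of a single orbit $\Oo_\gc$. Properness gives $\dim\Oo_\gc \le \dim\Oo_\lc + 2\dim\nc$, and the real work is showing that the fibre of $\mu$ over a generic point of $\Oo_\gc$ is finite, which forces equality. Once that dimension is established, one computes $\dim(\Oo_\gc\cap S)$ directly from the fibre of $\mu$ over a point of $S$, and your concluding open-dense-$\PC$-orbit argument goes through without ever needing a formula for $\dim\PC^Y$.
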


The induced orbit only depends on the Levi factor $\lc$ of $\pc$: 

\begin{theorem}[\cite{\CMc}, Theorem 7.1.3]
Let $\pc=\lc+\nc$ and $\pc'=\lc+\nc'$  be two parabolic subalgebras in $\gc$ have the same Levi subalgebra $\lc$ and let $\Oo_\lc$ be a nilpotent orbit in $\lc$. Then 
$$\text{Ind}_\pc^\gc(\Oo_\lc)=\text{Ind}_{\pc'}^\gc(\Oo_\lc).$$
\end{theorem}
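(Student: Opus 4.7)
The plan is to reduce the theorem to a rank-one comparison via induction in stages, then resolve that case through a direct identification of orbit closures.

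First, I would establish induction in stages for nilpotent induction: for parabolics $\pc' \subset \pc$ with compatible Levi decompositions $\pc = \lc + \nc$ and $\pc' = \lc' + \nc'$ (so $\lc' \subset \lc$ and $\nc \subset \nc'$), and for $\qc := \pc' \cap \lc = \lc' + (\nc' \cap \lc)$, one has
\begin{equation*}
\text{Ind}_{\pc'}^{\gc}(\Oo_{\lc'}) \;=\; \text{Ind}_{\pc}^{\gc}\bigl(\text{Ind}_{\qc}^{\lc}(\Oo_{\lc'})\bigr).
\end{equation*}
This follows by using the decomposition $\nc' = (\nc' \cap \lc) + \nc$ to verify that the right-hand side meets $\Oo_{\lc'} + \nc'$ in an open dense subset, then invoking the uniqueness in Theorem 7.1.1.

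Next, I would parametrize the parabolics of $\gc$ with Levi $\lc$ by chambers of the restricted root system $\Delta(\gc, Z(\lc)^\circ)$. Two parabolics are adjacent across a single wall precisely when they are both contained in a common parabolic $\pc'' = \lc'' + \nc''$ whose Levi $\lc''$ has semisimple rank one greater than that of $\lc$. Since the chamber graph of any root system is connected, any two parabolics with Levi $\lc$ are joined by a sequence of such adjacencies; combined with induction in stages, this reduces the theorem to the \emph{rank-one case}: $\pc_1, \pc_2$ are the two parabolics inside such an $\lc''$ with common Levi $\lc$, and one must prove $\text{Ind}_{\pc_1}^{\lc''}(\Oo_\lc) = \text{Ind}_{\pc_2}^{\lc''}(\Oo_\lc)$.

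In the rank-one case, both induced orbits share the dimension $\dim \Oo_\lc + 2 \dim \nc_1^{\lc''}$, and the orbit closure of each equals $\overline{\text{Ad}(L'') \cdot (\overline{\Oo_\lc} + \nc_i^{\lc''})}$, where $L''$ is the connected subgroup of $\GC$ with Lie algebra $\lc''$. To show these closures agree, I would pick a generic $X \in \Oo_\lc$ and $e \in \nc_1^{\lc''}$ and construct an explicit $L''$-conjugate of $X + e$ lying in $\overline{\Oo_\lc} + \nc_2^{\lc''}$, exploiting the interaction between the Jacobson-Morozov triple of $X$ inside $\lc$ and the restricted root vectors spanning $\nc_i^{\lc''}$. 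Symmetric treatment of both inclusions, combined with irreducibility and equality of dimensions, forces the orbit closures — hence the induced orbits themselves — to coincide. The main obstacle is exactly this rank-one identification: the relative Weyl reflection swapping $\pc_1$ and $\pc_2$ need not preserve $\Oo_\lc$ as a set, so a symmetry argument is unavailable, and the matching of the two orbit closures must instead be achieved by the explicit conjugation computation above.
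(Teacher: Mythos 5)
The paper does not supply a proof of this statement; it is quoted verbatim from Collingwood--McGovern (Theorem 7.1.3), whose argument --- following Lusztig--Spaltenstein --- is more uniform: the closure of the induced orbit is identified with the image of the proper map $\GC\times_\PC(\overline{\Oo_\lc}+\nc)\to\gc$, and one shows directly that this image depends only on $\lc$. Your skeleton --- induction in stages plus connectivity of the chamber graph of the relative hyperplane arrangement in $\mathfrak{z}(\lc)$, reducing everything to the rank-one comparison --- is a legitimate alternative (induction in stages rests only on the existence/uniqueness in Theorem 7.1.1, so there is no circularity), but it concentrates the entire content of the theorem in the rank-one case, which you leave as an unspecified ``explicit conjugation computation.'' That is the genuine gap: you offer no mechanism for producing the required $L''$-conjugate, and in the rank-one situation (where $\qc_1$ and $\qc_2$ are opposite maximal parabolics of $\lc''$, which can itself be all of $\gc$) there is no obvious ad-hoc conjugation.

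Your stated reason for ruling out a symmetry argument is also incorrect, and fixing it closes the gap. You are right that the relative Weyl reflection in $N_{L''}(L)/L$ may induce an outer automorphism of $\lc$ and hence need not preserve $\Oo_\lc$. But take instead a Chevalley involution $\tau$ of $\lc''$ relative to a Cartan $\hc\subset\lc$, so $\tau|_\hc=-1$. Then $\tau$ preserves $\mathfrak{z}(\lc)\subset\hc$ and hence $\lc$; it restricts to a Chevalley involution of $\lc$; and it exchanges $\qc_1$ and $\qc_2$, since it negates the cocharacter cutting them out. A Chevalley involution preserves \emph{every} nilpotent orbit of a reductive Lie algebra: if $(E,H,F)$ is a standard triple with $H\in\hc$ dominant, one checks that $(\tau(F),H,\tau(E))$ is again a standard triple with the same dominant neutral element, so $\tau(F)\in\Oo_E$, and since $F$ and $E$ are conjugate this gives $\tau(\Oo_E)=\Oo_E$. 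Applying this both in $\lc$ and in $\lc''$ yields $\tau(\Oo_\lc)=\Oo_\lc$ and $\tau(\text{Ind}_{\qc_1}^{\lc''}(\Oo_\lc))=\text{Ind}_{\qc_1}^{\lc''}(\Oo_\lc)$, whence
\begin{equation*}
\text{Ind}_{\qc_1}^{\lc''}(\Oo_\lc)=\tau\bigl(\text{Ind}_{\qc_1}^{\lc''}(\Oo_\lc)\bigr)=\text{Ind}_{\qc_2}^{\lc''}\bigl(\tau(\Oo_\lc)\bigr)=\text{Ind}_{\qc_2}^{\lc''}(\Oo_\lc),
\end{equation*}
which is exactly the rank-one statement you need; your reduction then completes the proof.
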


We say a nilpotent orbit is a Richardson orbit if it is induced from the trivial orbit on some parabolic subalgebra in $\gc$. Suppose $\Oo$ is a nilpotent orbit with $\Sl$-triple $\{X, H, Y\}$ and let $\mathcal{D}_H$ be the labelled Dynkin diagram for $\Oo$. Let $\Delta(\Oo)$ be the complement set of vertices labelled 2 in $\mathcal{D}_H$. Let $\lc$ be the Levi subalgebra generated by the roots in $\Delta(\Oo)$.

\begin{theorem}[\cite{\CMc}, Theorem 7.1.6]
Let $\mathcal{D}'(\Oo)$ be the labelled sub-diagram of $\mathcal{D}(\Oo)$ consisting of vertices labelled 0 or 1. If $\mathcal{D}'$ is the labelled Dynkin diagram of a nilpotent orbit $\Oo_\lc$ in $\lc$, then, $\Oo=\text{Ind}_\lc^\gc(\Oo_\lc)$. 
\end{theorem}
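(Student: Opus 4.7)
The plan is to identify $\Oo$ as the unique open dense $\Ad(\PC)$-orbit in $\Oo_\lc + \nc$ guaranteed by Theorem 7.1.1, applied to the parabolic $\pc = \lc + \nc$ whose nilradical $\nc$ is spanned by positive root spaces $\gc_\as$ such that $\as$ involves at least one simple root labelled $2$. This reduces to verifying (a) the dimension match $\dim \Oo = \dim \Oo_\lc + 2\dim \nc$, and (b) that some element of $\Oo$ lies in $\Oo_\lc + \nc$.

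First I would establish compatibility between $\lc$, $\nc$, and the $\ad(H)$-grading $\gc = \bigoplus_i \gc_i$ of the $\Sl$-triple $\{X, H, Y\}$ for $\Oo$. Any positive root $\as \in \nc$ has $\la \as, H\ra \geq 2$ because its support contains some simple root labelled $2$, so $\nc \subset \bigoplus_{i \geq 2} \gc_i$; conversely any positive root with $\la \as, H\ra \in \{0,1\}$ uses only simple roots labelled $0$ or $1$, so it lies in $\lc$. Hence $\gc_0, \gc_1 \subset \lc$. Next, choose a nilpositive triple $\{X_\lc, H_\lc, Y_\lc\} \subset \lc$ for $\Oo_\lc$ with $H_\lc$ dominant in $\lc$ and labels on $\Delta(\Oo)$ matching $\mathcal{D}_H$. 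Then $H - H_\lc$ annihilates every simple coroot of $\lc$, so it is central in $\lc$, and $\ad(H) = \ad(H_\lc)$ on $\lc$; in particular the weight-$i$ space of $\lc$ is $\lc \cap \gc_i$, and the weight-$0$ and weight-$1$ pieces of $\lc$ are exactly $\gc_0$ and $\gc_1$.

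With these identifications the standard dimension formula $\dim \Oo = \dim \gc - \dim \gc_0 - \dim \gc_1$ (from the $\Sl$-weight decomposition of the adjoint module) applied to both $\Oo \subset \gc$ and $\Oo_\lc \subset \lc$ yields
\[
\dim \Oo - \dim \Oo_\lc \;=\; \dim \gc - \dim \lc \;=\; 2\dim \nc,
\]
which matches the dimension of $\text{Ind}_\lc^\gc(\Oo_\lc)$ by Theorem 7.1.1. For existence I would invoke Kostant's theorem: the Jacobson-Morozov Levi subgroup $\GC_0$ (with Lie algebra $\gc_0$) acts on $\gc_2$ with $\GC_0 \cdot X$ as the unique open orbit. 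Decompose $X = X_1 + X_2$ along the $\LC$-stable splitting $\gc_2 = (\lc \cap \gc_2) \oplus (\nc \cap \gc_2)$; the linear projection $\pi: \gc_2 \to \lc \cap \gc_2$ is $\GC_0$-equivariant since $\GC_0 \subset \LC$ and $\LC$ normalizes both $\lc$ and $\nc$. Thus $\GC_0 \cdot X_1 = \pi(\GC_0 \cdot X)$ is open in $\lc \cap \gc_2$. Applying Kostant inside $\lc$ to $\Oo_\lc$—whose Jacobson-Morozov Levi is again $\GC_0$ because $\lc \cap \gc_0 = \gc_0$—identifies the unique open $\GC_0$-orbit in $\lc \cap \gc_2$ as $\Oo_\lc \cap \gc_2$, so $X_1 \in \Oo_\lc$. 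Therefore $X = X_1 + X_2 \in \Oo_\lc + \nc$, and Theorem 7.1.1 together with the dimension equality forces $\Oo = \text{Ind}_\lc^\gc(\Oo_\lc)$.

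The main obstacle is the final identification $X_1 \in \Oo_\lc$. It rests on the structural equality $\lc_0 = \gc_0$ from the setup, which guarantees that a single Levi $\GC_0$ governs the open orbit both in $\gc_2$ (Kostant inside $\gc$) and in $\lc \cap \gc_2$ (Kostant inside $\lc$); without this identification $X_1$ could a priori land in a proper $\LC$-suborbit of $\overline{\Oo_\lc}$, and one would need to fall back on a more involved comparison of labelled Dynkin diagrams.
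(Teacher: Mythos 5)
The paper does not prove this statement; Theorem 7.1.6 is quoted directly from Collingwood--McGovern without argument, so there is no in-paper proof to compare against.

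Your argument is correct and, in outline, is the standard one. The two load-bearing facts are exactly as you present them: the dimension identity $\dim\Oo - \dim\Oo_\lc = 2\dim\nc$, obtained from $\dim\Oo = \dim\gc - \dim\gc_0 - \dim\gc_1$ together with the inclusion $\gc_0 \oplus \gc_1 \subset \lc$ (any root of $H$-weight $0$ or $1$ has no simple root labelled $2$ in its support, and your observation that $H - H_\lc$ is central in $\lc$ makes $\lc_i = \lc\cap\gc_i$ unambiguous); and the existence statement $X = X_1 + X_2 \in \Oo_\lc + \nc$, obtained by projecting the open $\GC_0$-orbit $\GC_0\cdot X \subset \gc_2$ through the $\GC_0$-equivariant, open, surjective linear map $\gc_2 \to \lc\cap\gc_2$ and noting that the Jacobson--Morozov Levi for $\Oo_\lc$ inside $\lc$ is again $\GC_0$ because $\lc\cap\gc_0 = \gc_0$. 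These combine with Theorem 7.1.1 (any orbit meeting $\Oo_\lc + \nc$ lies in $\overline{\mathrm{Ind}_\lc^\gc(\Oo_\lc)}$, which has a unique orbit of top dimension) to give $\Oo = \mathrm{Ind}_\lc^\gc(\Oo_\lc)$, as you conclude.
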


Recall that even nilpotent orbits have the nodes of their Dynkin diagram labelled either 0 or 2. If $\Oo$ is even, $\lc = \text{Cent}_{\gc}(H)$ and, $\mathcal{D}'(\Oo)$ defined in the theorem corresponds to the trivial orbit $0_\lc$ in $\lc$. Therefore, we have

\begin{cor}
Suppose $\Oo$ is an even nilpotent orbit in $\gc$. Then, $\Oo$ is a Richardson orbit; induced from the trivial orbit on the Levi subalgebra $\lc$ of $\gc$ generated by the nodes labelled 0 in $\mathcal{D}(\Oo)$. 
\end{cor}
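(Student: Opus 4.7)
The plan is to derive the corollary as a direct application of Theorem 7.1.6, by checking that for an even orbit the sub-diagram $\mathcal{D}'(\Oo)$ is precisely the labelled Dynkin diagram of the zero orbit in the Levi generated by the 0-nodes.

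First I would unpack the combinatorics. By definition of an even orbit, every node of $\mathcal{D}_H$ carries label $0$ or $2$, so there are no $1$'s. The set $\Delta(\Oo)$, defined in the excerpt as the complement of the vertices labelled $2$, therefore coincides with the set of vertices labelled $0$; let $\lc$ be the standard Levi generated by these simple roots together with $\hc$. Likewise $\mathcal{D}'(\Oo)$, the sub-diagram of vertices labelled $0$ or $1$, consists of exactly the same vertices, each carrying label $0$. Thus $\mathcal{D}'(\Oo)$ is the all-zero labelling of the Dynkin diagram of $\lc$, which is the weighted Dynkin diagram of the trivial (zero) nilpotent orbit $0_\lc$ in $\lc$.

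Next I would verify the aside that $\lc = \text{Cent}_\gc(H)$, which justifies calling $\lc$ the Levi of the associated parabolic. A root $\alpha = \sum_i n_i \alpha_i$ satisfies $\langle \alpha, H\rangle = \sum_i n_i \langle\alpha_i, H\rangle$; since the labels $\langle \alpha_i, H\rangle$ are all $0$ or $2$ and the $n_i$ are all of one sign, one has $\langle \alpha, H\rangle = 0$ iff $n_i = 0$ whenever $\alpha_i$ is labelled $2$. Hence the root spaces in $\text{Cent}_\gc(H)$ are precisely those whose roots lie in the sub-root-system generated by the $0$-nodes, giving $\text{Cent}_\gc(H) = \lc$.

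With these two observations in hand, Theorem 7.1.6 applies: taking $\Oo_\lc = 0_\lc$ (whose labelled Dynkin diagram is $\mathcal{D}'(\Oo)$), we conclude $\Oo = \text{Ind}_\lc^\gc(0_\lc)$, which is the definition of a Richardson orbit. I do not expect any real obstacle here; the only mildly delicate point is recognising the all-zero labelling as the weighted diagram of the trivial orbit, and this follows immediately from the fact that $H=0$ is the semisimple element of the zero $\mathfrak{sl}(2)$-triple, whose labels on every simple root are $\langle \alpha_i, 0\rangle = 0$.
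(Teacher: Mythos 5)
Your proof is correct and follows exactly the route the paper itself takes: specialize Theorem 7.1.6 to the even case, note that the absence of $1$-labels makes $\mathcal{D}'(\Oo)$ the all-zero diagram on the $0$-nodes (hence the trivial orbit in $\lc$), and identify $\lc$ with $\text{Cent}_\gc(H)$. You simply spell out the two verifications that the paper asserts in passing.
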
 

\subsection{Real Nilpotent Orbits}
Recall that $\N$ was defined to be the cone of nilpotent elements in $\gc$. The real nilpotent cone is defined to be the nilpotents in $\gc_\mathbb{R}$:
\begin{equation}
 \N_\mathbb{R} := \N\cap \gc_\mathbb{R}.
\end{equation}

The real nilpotent cone $ \N_\mathbb{R}$ is a finite union of $\Ad(\GR)$-conjugacy classes. When $\gc$ is of classical type, the conjugacy classes are parameterized by signed young tableau, for more details about this classification and its explicit realization we refer the reader to (\cite{\CMc}, Chapter 9). 

\begin{defn}[Real form of a complex nilpotent orbit]\label{real_form_of_O}
Let $\Oo$ be a complex nilpotent orbit for $\GC$. Let $\GR$ be a real form of $\GC$. By a real form of $\Oo$ we mean a $\GR$-conjugacy class of nilpotent elements in $\Oo\cap\gc_\mathbb{R}$. 
\end{defn} 

We use an alternated description of $\N_{\mathbb{R}}$ based on the Cartan involution $\theta$. Let $\gc = \kc\oplus \ssc$ be the Cartan decomposition of $\gc$ with respect to $\theta$, that is $\kc=\gc^\theta$ and $\ssc = \gc^{-\theta}$.
\\
Let 
\begin{equation}
\N_\theta := \{\text{ Nilpotent elements in $\ssc$}\}.
\end{equation}

Since $\KC$ preserves $\ssc = \gc/\kc$, $\KC$ acts on acts on $\mathfrak{s}$ and this action partitions $\N_\theta$ into finitely many orbits. 

\begin{theorem}[Kostant-Sekiguchi]
There is a natural bijective correspondence between nilpotent $\GR$-orbits in $\gc_\mathbb{R}$ and the nilpotent $\KC$-orbits in $\mathfrak{s}$. 
\end{theorem}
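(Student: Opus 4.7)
The plan is to reduce both sides of the claimed bijection to conjugacy classes of certain normalized $\Sl$-triples in $\gc$, and then exhibit an explicit transformation, a Cayley transform, between these two sets of normalized triples.

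First, I would introduce two distinguished classes of $\Sl$-triples $\{H, X, Y\}$ in $\gc$. A \emph{Cayley triple} lies in $\gc_\mathbb{R}$ and satisfies $\theta H = -H$, $\theta X = -Y$, $\theta Y = -X$. A \emph{normal} (or Kostant--Sekiguchi) triple is a complex triple with $H \in \kc$ and $X, Y \in \ssc$. The key structural statements to establish are: every nonzero $X \in \N_\mathbb{R}$ can be completed to a Cayley triple, and the $\GR$-conjugacy class of such a triple depends only on the $\GR$-orbit of $X$; symmetrically, every nonzero $X' \in \N_\theta$ lies in a normal triple whose $\KC$-conjugacy class is determined by the $\KC$-orbit of $X'$. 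Both reduce to Jacobson--Morozov (Theorem \ref{J-M1}) combined with a Kostant-type lemma applied equivariantly: starting from an arbitrary $\Sl$-triple through $X$, one conjugates inside the appropriate centralizer, using the $\theta$-action or complex conjugation, to put $H$ into the correct eigenspace.

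Next, I would define the Cayley transform at the level of triples. For a Cayley triple $(H,X,Y)$, set $c := \exp\!\bigl(\tfrac{\pi i}{4}\,\ad(X+Y)\bigr) \in \Ad(\GC)$; a direct computation inside the copy of $\SL$ generated by the triple shows that $c\cdot(H,X,Y)$ is a normal triple. I would then check that this descends to a map on conjugacy classes: if $g \in \GR$ conjugates two Cayley triples, then $cgc^{-1} \in \KC$ conjugates their images, because conjugation by $c$ intertwines the real-structure condition on one side with the $\theta$-fixed condition on the other. An inverse Cayley transform defined in the same manner shows that these assignments are mutually inverse, giving a bijection between $\GR$-conjugacy classes of Cayley triples and $\KC$-conjugacy classes of normal triples. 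Combined with the structural statements above, this yields the desired bijection $\GR\cdot X \mapsto \KC\cdot X'$ on nilpotent orbits (with the zero orbit matched trivially).

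The main obstacle is the uniqueness of the normalized triple up to the expected conjugacy. Concretely, one must show that any two Cayley triples containing a given $X \in \N_\mathbb{R}$ are conjugate under the real centralizer $Z_{\GR}(X)$, and analogously for normal triples and $Z_{\KC}(X')$. This rests on Kostant's theorem that $Z_\GC(\{H,X,Y\}) \hookrightarrow Z_\GC(X)$ is a deformation retract, applied now $\theta$-equivariantly, together with a careful analysis of how $\theta$ and complex conjugation act on the slice of $\Sl$-triples through $X$. Once this uniqueness is in place, the bijectivity of the orbit map is formal from the Cayley-transform bijection at the level of triples.
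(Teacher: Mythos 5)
The paper states the Kostant--Sekiguchi correspondence without proof, citing it as a classical result and pointing to \cite{\CMc} for the surrounding theory, so there is no in-paper argument to compare against; I assess your sketch directly.

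Your outline follows the standard route: normalize $\Sl$-triples on each side (Cayley triples inside $\gc_\mathbb{R}$, normal triples with $H\in\kc$ and $X,Y\in\ssc$), and pass between them by the Cayley element $c=\exp\bigl(\tfrac{\pi i}{4}\ad(X+Y)\bigr)$. The computation that $c$ converts a Cayley triple into a normal triple is correct, and the Jacobson--Morozov input is as you describe.

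The step that does not go through is the claim that the Cayley transform descends to a map on conjugacy classes ``because $cgc^{-1}\in\KC$.'' The Cayley element is attached to the triple: if $T_1,T_2$ are Cayley triples and $g\in\GR$ satisfies $gT_1=T_2$, then the attached Cayley elements satisfy $c_2=gc_1g^{-1}$, so the element carrying the normal triple $c_1T_1$ to $c_2T_2$ is $c_2gc_1^{-1}=g$ itself. There is no reason for $g\in\GR$ to lie in $\KC$. What this shows is that the two normal triples are $\GR$-conjugate, which is not the $\KC$-conjugacy the correspondence requires, so well-definedness of the induced map on orbits is not established.

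The missing ingredient is a strengthening of your uniqueness statement. You assert uniqueness of Cayley triples up to $Z_\GR(X)$ and of normal triples up to $Z_\KC(X')$; the argument actually needs that any two Cayley triples through a fixed $X\in\N_\mathbb{R}$ are already conjugate under the compact centralizer $Z_{K_\mathbb{R}}(X)$, where $K_\mathbb{R}=\GR\cap\KC$, and symmetrically for normal triples. Since $K_\mathbb{R}$ lies inside both $\GR$ and $\KC$ and the Cayley-transform formula is $K_\mathbb{R}$-equivariant, both sides then reduce to $K_\mathbb{R}$-conjugacy of triples, at which point the Cayley transform does give the bijection. The deformation-retract argument you invoke is the right tool, but it must be run so as to produce conjugating elements in $K_\mathbb{R}$, i.e.\ compatibly with \emph{both} $\theta$ and the complex conjugation $\sigma$ at once; the weaker $Z_\GR$/$Z_\KC$ uniqueness you propose does not formally imply this, and with it alone the proof does not close.
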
  

We now define,
\begin{defn}[$\theta$-form of a complex nilpotent orbit]\label{theta_real_form_of_O}
Let $\Oo$ be a complex nilpotent orbit for $\GC$. Let $\theta$ be the Cartan involution defining the real form $\GR$ of $\GC$. By a $\theta$-form of $\Oo$ we mean a $\KC$-conjugacy class of nilpotent elements in $\Oo\cap\ssc$, where $\KC=\GC^\theta$ and $\gc=\kc\oplus\ssc$ is the Cartan decomposition of $\gc$ with respect to $\theta$. 
\end{defn} 
Since $\theta$-forms are defined using Cartan involutions, they are better suited for our applications.

\subsection{Coadjoint Nilpotent Orbits}
In applications, nilpotent orbits arise in the dual vector space $\gc^*$ of $\gc$. Note that $\gc^*$ does not have a Lie algebra structure, and as such, there is no direct way of making sense of nilpotent elements in $\gc^*$. If $\gc$ is a complex reductive Lie algebra, one can define an invariant non-degenerate symmetric bilinear form on $\gc$:
\begin{equation}
\langle \cdot, \cdot \rangle_\gc: \gc \times \gc \longrightarrow \mathbb{C}
\end{equation}
 such that $ \langle \cdot, \cdot \rangle_\gc$ restricted to $[\gc, \gc]$ is a nonzero constant multiple of the Killing form on $[\gc, \gc]$. 
 \\
 
The fact that $ \langle \cdot, \cdot \rangle_\gc$ is non-degenerate implies that the map $\phi: \gc \longrightarrow\gc^*$ defined by
\begin{equation}
X \mapsto \phi_X:=\langle X, \cdot \rangle_\gc \in \gc^*,
\end{equation}
is an isomorphism of vector spaces. Define the nilpotent cone in $\gc^*$ as $ \N^* :=  \phi(\N)$. 

Suppose $\Oo$ is a nilpotent orbit in $\gc$ with nilpositive element $X\in \Oo$ (so that $\Oo = \Ad(G)\cdot X$), then, we define the corresponding coadjoint orbit to be 
\begin{equation}
\Oo^*:= \Ad(G)\cdot \phi_X \subset \N^*.
\end{equation}

We can use the map $\phi$ to identify the other coadjoint cones of nilpotent elements with respect to a real form $\GR$ of $\GC$ corresponding to a Cartan involution $\theta$ as follows:
\begin{equation}
\N_\mathbb{R}^* :=  \phi(\N_\mathbb{R}),  \quad \N_\theta^* :=  \phi(\N_\theta).
\end{equation}
 
In this setting we have bijections:
\begin{enumerate}
	\item $\N/\GC$ and $\N^*/\GC$.
	\item $\N_\theta/\KC$ and $\N^*_\theta/\KC$.
	\item $\N_\mathbb{R}/\GR$ and $\N^*_\mathbb{R}/\GR$.
\end{enumerate}
Therefore using the Kostant-Sekiguchi correspondence $\N_\theta/\KC, \N^*_\theta/\KC, \N_\mathbb{R}/\GR$ and $\N^*_\mathbb{R}/\GR$ are all in bijective correspondence.
\subsection{Duality of Nilpotent Orbits}
Let $\GC$ be a complex reductive group and $\GC^\vee$ be the corresponding dual group. 
\\
When $\GC$ is of classical type, there is a basic duality due to Spaltenstein defined as a map $d : \N(\GC) \longrightarrow \N(\GC^\vee)$, called the duality map. We refer the reader to Section 6.3 in \cite{\CMc} for explicit description in terms of partitions.

\subsection{The Springer Correspondence}
We recall the Springer correspondence. Define $\mathcal{B}$ to be set of Borel subalgebras in $\gc$.Given a nilpotent element $X$, the variety $\mathcal{B}_X$ is the set of Borel subalgebras containing $X$. The group $\GC^X:=\text{Cent}_{\GC}(X)$ acts on $\mathcal{B}_X$ via the adjoint action. The induced action of this action on the cohomology $H^*(\mathcal{B}_X, \mathbb{C})$ is trivial on $\GC_0^X$ so that $A(\Oo_X):= \GC^X/\GC^X_0$ acts on $H^*(\mathcal{B}_X, \mathbb{C})$. 
\\
For an irreducible representation $(\pi, V_\pi)$ of $A(\Oo_X)$ define
\begin{equation}
H^*(\mathcal{B}_X, \mathbb{C})_\pi:= \text{Hom}_{A(\Oo_X)}(V_\pi, H^*(\mathcal{B}_X, \mathbb{C})).
\end{equation}
We are now ready to state the Springer correspondence:

\begin{theorem}[Springer]
For any nilpotent element $X$, there is a natural action of $\W$ on $H^*(\mathcal{B}_X, \mathbb{C})$.
\begin{enumerate}
\item The actions of $\W$ and $A(\Oo_X)$ commute; so $\W$ acts on $H^*(\mathcal{B}_X, \mathbb{C})_\pi$ for $\pi \in \widehat{A(\Oo_X)}$.
\item The natural maps $$ H^*(\mathcal{B}, \mathbb{C})\longrightarrow H^*(\mathcal{B}_X, \mathbb{C}),$$ induced by $H^*(\mathcal{B}_X, \mathcal{B})$, are $\W$ - equivariant. 
\item For $\pi\in \widehat{A(\Oo_X)}$, the representation $\sigma(X, \pi)$ of $\W$ on $H^{\text{dim}_\mathbb{R}(\mathcal{B}_X)}(\mathcal{B}_X, \mathbb{C})_\pi$ is irreducible or zero.
\item If $\pi$ is trivial, $\sigma(X, \pi)\neq 0$.
\item Suppose $\sigma \in \widehat{W}$. Then there are:  a nilpotent element $X\in \gc$, unique upto $\Ad(G)$; and a unique $\pi\in \widehat{A(\Oo_X)}$, such that $$\sigma=\sigma(X, \pi).$$
\end{enumerate}
\end{theorem}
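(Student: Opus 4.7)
The plan is to realize everything geometrically through the Springer resolution. Let $\widetilde{\N} = \{(X,\bc) \in \N \times \Bb : X \in \bc\} \simeq T^*\Bb$, with the Springer map $\mu: \widetilde{\N} \to \N$ sending $(X,\bc) \mapsto X$. The fiber $\mu^{-1}(X)$ is exactly $\Bb_X$, and $\mu$ is a proper semismall resolution. The entire statement will then be produced by studying the direct image sheaf $R\mu_*\mathbb{C}_{\widetilde{\N}}$ on $\N$.

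To construct the $\W$-action on $H^*(\Bb_X,\mathbb{C})$, I would work with the Steinberg variety $\Zz = \widetilde{\N}\times_\N \widetilde{\N}$, whose irreducible components of top dimension are naturally indexed by $\W$ (via the Bruhat decomposition of $\GC/\BC$). Convolution of Borel–Moore homology classes on $\Zz$ endows $H_*^{BM}(\Zz,\mathbb{C})$ with an algebra structure isomorphic to $\mathbb{C}[\W]$, and it acts on $H_*^{BM}(\Bb_X,\mathbb{C})\simeq H^*(\Bb_X,\mathbb{C})$ (since $\Bb_X$ is proper). Part (1) is then formal: $A(\Oo_X)$ acts on $\Bb_X$ by conjugation and commutes with the convolution (which involves only global geometry on $\Bb$), so the two actions commute. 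Part (2) is the statement that the pullback $H^*(\Bb,\mathbb{C})\to H^*(\Bb_X,\mathbb{C})$ is the inclusion of the $\sigma_{\text{triv}}$-isotypic component of the $\W$-action built from the Grothendieck–Springer simultaneous resolution, and is $\W$-equivariant by naturality of the construction.

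For (3) and (4), the key input is that, since $\mu$ is semismall with connected fibers in the regular nilpotent case and the decomposition theorem applies, the top Borel–Moore homology $H_{2d_X}^{BM}(\Bb_X,\mathbb{C})$ (where $d_X = \dim_\mathbb{C}\Bb_X$) has a basis indexed by the irreducible components of $\Bb_X$, permuted by $A(\Oo_X)$. The resulting $A(\Oo_X)\times \W$ representation decomposes into isotypic components under $A(\Oo_X)$, and on each $\pi$-isotypic piece the Steinberg convolution acts through a quotient of $\mathbb{C}[\W]$ that is either simple or zero — this is proved by realizing $\sigma(X,\pi)$ as the stalk at $(X,\pi)$ of a simple perverse sheaf $IC(\Oo_X,\pi)$ appearing in the decomposition of $R\mu_*\mathbb{C}_{\widetilde{\N}}$. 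Part (4) follows because the fundamental class $[\Bb_X]\in H^{\text{top}}(\Bb_X,\mathbb{C})$ is $A(\Oo_X)$-invariant and nonzero.

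For the bijectivity in (5), I would combine the decomposition theorem applied to the Springer resolution with a counting argument. The decomposition theorem writes $R\mu_*\mathbb{C}_{\widetilde{\N}}[\dim \widetilde{\N}]$ as a direct sum $\bigoplus_{(\Oo,\pi)} V_{\Oo,\pi}\otimes IC(\Oo,\pi)$ where the sum runs over pairs of nilpotent orbits and irreducible local systems on them, and $V_{\Oo,\pi}$ is a multiplicity space. Matching the $\W$-action (inherited from the generic Grothendieck–Springer fiber, which is the regular representation of $\W$) against this decomposition identifies the multiplicity spaces with the $\sigma(X,\pi)$, and in particular shows every $\sigma\in\widehat{\W}$ must appear at least once. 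Counting: on one hand $\dim\mathbb{C}[\W] = \sum_\sigma (\dim\sigma)^2$, on the other hand the generic fiber computation gives the same total via the $\sigma(X,\pi)$, forcing each nonzero $\sigma(X,\pi)$ to be distinct and irreducible. The hardest step, by some distance, is part (5): controlling which local systems actually occur and showing no $\W$-representation is missed requires the full strength of the decomposition theorem together with the semismallness of $\mu$, and any self-contained treatment would have to develop (or quote) this machinery as a black box.
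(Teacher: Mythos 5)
The paper does not prove this theorem: it is stated as Springer's classical result, cited as background (see \cite{\Carter}), and then used to \emph{define} the Springer correspondence. There is therefore no in-paper proof to compare against. What you have written is a reasonable sketch of the modern geometric proof via the Springer resolution, the Steinberg variety convolution algebra, and the decomposition theorem for semismall maps --- this is the Borho--MacPherson/Lusztig/Chriss--Ginzburg line of argument, and it is a legitimate route to all five assertions.

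A few places in your sketch are wobbly or outright wrong and would need to be fixed if this were to become a real proof. Your treatment of part (2) misstates the structure: $H^*(\mathcal{B},\mathbb{C})$ is the coinvariant algebra and carries the \emph{regular} representation of $\W$ under the action induced from the Grothendieck--Springer resolution; it is not the trivial-isotypic piece of anything, and the restriction map to $H^*(\mathcal{B}_X,\mathbb{C})$ is simply $\W$-equivariant because both actions arise from the same family of correspondences (equivalently, from the same map of complexes obtained by specializing the Grothendieck simultaneous resolution). The parenthetical ``semismall with connected fibers in the regular nilpotent case'' does not parse; for $X$ regular nilpotent, $\mathcal{B}_X$ is a point, and in any case what you actually need for (3)--(5) is the semismallness of $\mu$ itself, which guarantees that $R\mu_*\mathbb{C}_{\widetilde{\N}}[\dim\widetilde{\N}]$ is perverse, that the decomposition theorem produces no shifts, and that $\mathrm{End}(R\mu_*\mathbb{C}[\dim\widetilde{\N}])\cong\mathbb{C}[\W]$ is semisimple; irreducibility or vanishing of $\sigma(X,\pi)$ is then a formal consequence of Schur's lemma applied to the multiplicity spaces, rather than something one deduces directly from a basis of fundamental classes. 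Finally, you should be explicit that the convolution construction and Springer's original $\ell$-adic construction differ by a twist by the sign character; the statement as written does not commit to a normalization, but any proof must pick one and keep it consistent, since it changes which $\sigma$ is attached to which orbit (e.g.\ whether the zero orbit goes to the sign or trivial representation). None of these are fatal to the strategy, but parts (3) and (5) really do require quoting the decomposition theorem and semismallness as black boxes, as you acknowledge; that is also how the paper treats the whole theorem.
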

The correspondence 
\begin{equation}
(G\cdot X, \pi)\longrightarrow \sigma(X, \pi)
\end{equation}
is called the Springer correspondence. We write 

\begin{equation}
\sigma(\Oo_X)=\sigma(\Oo_X,1).
\end{equation}

The Springer correspondence provides a way of attaching to each nilpotent orbit $\Oo$ a finite set of $\W$-representations, having a distinguished element $\sigma(\Oo)$. 
\subsection{Weyl Group Representations in Classical Type}
We go over some facts about Weyl group representations in types $B_l$ and $C_l$, details of the general situation can be found in \cite{\Carter}.  

\begin{theorem}[Irreducible Weyl group representations of Type $B_l$ and $C_l$]
The irreducible representations of the Weyl group $\W(B_l)$ (and $C_l$) are in bijection with pairs of partitions $(\alpha, \beta)$ such that $|\alpha|+|\beta|=l$. We write $\sigma_{(\as, \bs)}$ for the $\W$-representation corresponding to $(\as, \bs)$. 
\end{theorem}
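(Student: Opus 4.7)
The plan is to use Clifford theory (or equivalently, Mackey's little group method) applied to the hyperoctahedral structure of $\W(B_l) = \W(C_l)$. The key observation is that this Weyl group is the wreath product $(\mathbb{Z}/2\mathbb{Z})^l \rtimes S_l$, i.e. the group of signed permutations on $l$ letters, with the normal subgroup $N := (\mathbb{Z}/2\mathbb{Z})^l$ abelian. Since $N$ is abelian, Clifford theory reduces the representation theory of $\W$ to that of certain subgroups of $S_l$, namely the stabilizers of characters of $N$ under the permutation action.

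First I would identify $\widehat{N}$ with the power set of $\{1,\ldots,l\}$: a character $\chi_S$ of $N$ is determined by the subset $S$ of coordinates where $\chi_S$ is nontrivial. The Weyl component $S_l$ acts on $\widehat{N}$ by permuting these subsets, so the orbits on $\widehat{N}$ are classified by the single invariant $|S| = k$, for $0 \le k \le l$. Next I would fix an orbit representative $\chi_{S_k}$ with $S_k = \{1,\ldots,k\}$; the stabilizer of $\chi_{S_k}$ in $S_l$ is visibly the Young subgroup $S_k \times S_{l-k}$, and the stabilizer in $\W$ is the ``little group'' $H_k := (\mathbb{Z}/2\mathbb{Z})^l \rtimes (S_k \times S_{l-k})$, on which $\chi_{S_k}$ extends naturally to a character by declaring the $S_k \times S_{l-k}$ part to act trivially.

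By Mackey/Clifford, each irreducible representation of $\W$ is induced from $H_k$, and the irreducibles of $\W$ restricting to copies of the orbit of $\chi_{S_k}$ are in bijection with irreducibles of $H_k / N \cong S_k \times S_{l-k}$. Tensoring such an irrep with $\chi_{S_k}$ and inducing to $\W$ gives the desired correspondence. Since irreducibles of $S_k$ are parameterized by partitions $\alpha \vdash k$ and irreducibles of $S_{l-k}$ by partitions $\beta \vdash l-k$, the irreducibles of $\W(B_l)$ are parameterized by pairs $(\alpha, \beta)$ with $|\alpha| + |\beta| = l$, and we set $\sigma_{(\alpha,\beta)}$ to be the resulting induced representation.

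The main technical step is verifying irreducibility and exhaustiveness of this construction: irreducibility follows because the inertia subgroup of $\chi_{S_k}$ is exactly $H_k$ (so Mackey's criterion applies), and exhaustiveness follows by a dimension count comparing $\sum_{|\alpha|+|\beta|=l} (\dim \sigma_{(\alpha,\beta)})^2$ with $|\W(B_l)| = 2^l \, l!$, which collapses via the hook length formula and the identity $\sum_{k=0}^l \binom{l}{k}^2 k! (l-k)! = 2^l \, l!$ (or more cleanly via the Frobenius reciprocity computation of induced dimensions). This dimension-sum bookkeeping is the only part requiring care; the rest is a formal application of Clifford theory to a semidirect product with abelian kernel.
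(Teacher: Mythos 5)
Your proof is correct, and it is essentially the standard argument. The paper itself does not supply a proof of this classical classification; it simply cites Carter's book (\cite{\Carter}), and the proof given there proceeds along exactly the lines you describe: $\W(B_l)=\W(C_l)$ is the wreath product $(\mathbb{Z}/2\mathbb{Z})\wr S_l$, one applies Clifford theory to the abelian normal subgroup $(\mathbb{Z}/2\mathbb{Z})^l$, identifies the orbits of characters with $k\in\{0,\ldots,l\}$, finds the little group $H_k$ with $H_k/N\simeq S_k\times S_{l-k}$, and induces the bipartition data $(\alpha,\beta)$ up to $\W$; your dimension identity $\sum_k\binom{l}{k}^2 k!(l-k)!=2^l l!$ correctly closes the exhaustiveness step. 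So the only thing to flag is that there is no \emph{competing} proof in the paper to compare against — you have reproduced, correctly and completely, the argument that the paper delegates to its reference.
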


Suppose $\as = (\as_0, \as_1, \dots, \as_m) $ and $\bs=(\bs_0, \bs_1, \dots, \bs_{m-1})$ (we allow for the parts to be zero, requiring that $\as$ has one more part that $\bs$) such that
\begin{equation}
0\le \as_0\le \as_1 \le \dots \le \as_m \quad \text{and} \quad 0\le \bs_0\le \bs_1 \le \dots \le \bs_{m-1}
\end{equation}
Lusztig attaches to $(\as, \bs)$ the following symbol:

$$
\begin{pmatrix}
\lambda \\ \mu
\end{pmatrix}:=
\begin{pmatrix}
\lambda_0& & \lambda_1 &&  \dots && \lambda_{m-1}&& \lambda_m \\
&\mu_0 &&\mu_1&& \dots && \mu_{m-1}&
\end{pmatrix}
$$
where $\lambda_i=\as_i+i$ and $\mu_j=\bs_i+i$ for $i=0, 1, 2, \dots$. 
\begin{defn}[Special $\W$-representation]
Let $\sigma_{(\as, \bs)}$ be a irreducible representation of $\W$ with correspoding symbol $\begin{pmatrix}
\lambda\\\mu
\end{pmatrix}$. We say $\sigma_{(\as, \bs)}$ is special if 
\begin{equation}
\lambda_0\le \mu_0 \le \lambda_1\le \mu_1\le \lambda_2\le \mu_2 \le \dots \le \lambda_{m+1}.
\end{equation}
\end{defn}

\begin{defn} [Special Nilpotent Orbit]
 Let $\Oo$ be a nilpotent orbit in $\gc$. We say $\Oo$ is special if the Weyl group representation $\sigma(\Oo)$ associated to $\Oo$ via the Springer correspondence is a special $\W$-representation.
\end{defn}
The Springer correspondence is explicitly realized when $\GC$ is of classical type, we make use of this realization in our computations and to this end has been implemented into Atlas. For the interested reader, the algorithm can be found in (\cite{\Carter}, pages 419-423).

\section{An Overview of the setting}
We outline the basic setting that we use for rest of this paper. Let $\GC$ be a complex connected reductive algebraic group. Let $\Int(\GC), \Aut(\GC)$ and $\Out(\GC)$ be the groups of inner automorphisms, automorphisms and outer automorphisms respectively of $\GC$. We have the following exact sequence
\\
\begin{center}
$\begin{tikzcd}
 1\arrow{r}  & \Int(\GC) \arrow{r}  & \Aut(\GC) \arrow{r}{p}  & \Out(\GC) \arrow{r}  &1,
 \end{tikzcd}
 $
\end{center}
so that $\Out(\GC) \simeq \Aut(\GC)/\Int(\GC)$. 
\\
A \textit{splitting datum} for $\GC$ is a tuple $(\BC, \HC, \{X_\as\})$, where $\BC$ is a Borel subgroup of $\GC$, $\HC$ a Cartan subgroup and $\{X_\as\}$ is the set of root vectors for the of simple roots of $\HC$ in $\BC$. An involution of $\GC$ is said to be \textit{distinguished} if it preserves a splitting datum. 
\\
Let $\GC^\vee$ be the dual group of $\GC$. There is a bijection between $\Out(\GC)$ and $\Out(\GC^\vee)$ (Definition 2.11, \cite{\Algo}) and we denote it by $\gamma\in \Out(\GC) \mapsto \gamma^\vee\in \Out(\GC^\vee)$. 
\\
Fix $\gamma\in \Out(\GC)$ an element of order two. An involution $\theta\in\Aut(\GC)$ is said to be in the \textit{inner class} of $\gamma$ if $p(\theta)=\gamma$. We will say that two involutions $\theta$ and $\theta'$ are inner to each other if they have the same image in $\Out(\GC)$ under the map $p$. We call the pair $(\GC, \gamma)$ a basic data, and the corresponding dual basic data is given by $(\GC^\vee, \gamma^\vee)$. Let $\Gamma=\Gal(\mathbb{C}/\mathbb{R}) = \{1, \sigma\}$, then in this setting,

\begin{defn}[Extended group, $L$-group]
The extended group for the pair $(\GC, \gamma)$ is the semidirect product $\ExG := \GC \rtimes \Gamma$, where $\sigma\in \Gamma$ acts by the distinguished involution in $p^{-1}(\gamma)$. 
The $L$-group for the pair $(\GC, \gamma)$ is defined to be the extended group for the pair $(\GC^\vee, \gamma^\vee)$, often denoted as $\GC^L$ or just $\EG$. 
\end{defn}

A \textit{real form} of $\GC$ is an antiholomorphic involutive automorphism $\sigma: \GC \longrightarrow \GC$. Let $\sigma_c$ be the compact real form of $\GC$ chosen such that it commutes with $\sigma$, then $\theta = \sigma\circ\sigma_c$ is a holomorphic involution of $\GC$. We prefer to work with holomorphic maps, and, to this end we need the following, (Theorem 3.2, \cite{\Algo}),

\begin{theorem}
The map $\sigma \mapsto \theta$ gives a bijection between $\GC$-conjugacy classes of antiholomorphic involutions and $\GC$-conjugacy classes of holomorphic involutions of $\GC$. 
\end{theorem}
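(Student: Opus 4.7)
The plan is to build an explicit two-sided inverse, relying on one classical input: every involution of $\GC$ (holomorphic or antiholomorphic) admits a commuting compact real form $\sigma_c$, unique up to conjugation by the centralizer of the involution. Granting this, the bijection is essentially the symmetric interchange $\sigma \leftrightarrow \theta = \sigma\sigma_c$, mediated by $\sigma_c$, between antiholomorphic and holomorphic involutions on $\GC$.

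First I would check that the assignment $\sigma \mapsto \theta$ descends to $\GC$-conjugacy classes. Two things must be verified: independence of the choice of commuting $\sigma_c$, and equivariance under $\GC$-conjugation of $\sigma$. For the first, the uniqueness part of the classical input produces an element $h$, fixed by $\sigma$, with $\sigma_c' = \text{Int}(h)\circ \sigma_c \circ \text{Int}(h)^{-1}$; using $\sigma \circ \text{Int}(h) = \text{Int}(h)\circ \sigma$ (which follows from $\sigma(h)=h$), a short computation gives $\sigma \sigma_c' = \text{Int}(h)\circ\theta\circ\text{Int}(h)^{-1}$. For the second, if $\sigma$ is replaced by its conjugate under $g \in \GC$, then conjugating $\sigma_c$ by $g$ produces a compact form commuting with the new $\sigma$, and the resulting $\theta$ is conjugated by $g$.

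Next I would construct the inverse in parallel: given a holomorphic involution $\theta$, pick a commuting compact real form $\sigma_c$ (again the classical input) and set $\sigma := \theta\sigma_c$. One verifies directly that $\sigma$ is antiholomorphic (holomorphic composed with antiholomorphic) and that $\sigma^2 = \theta^2\sigma_c^2 = 1$ using $[\theta,\sigma_c]=1$. Well-definedness on conjugacy classes is verbatim the argument of the previous paragraph. The two maps are mutually inverse because the same $\sigma_c$ mediates both directions: from $\sigma$ one forms $\theta = \sigma\sigma_c$, notes that $\theta \sigma_c = \sigma\sigma_c^2 = \sigma = \sigma_c\theta$ so $\sigma_c$ commutes with $\theta$, and applying the reverse map recovers $\theta\sigma_c = \sigma$.

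The hard part is the classical input on commuting compact real forms; everything else is bookkeeping with the relations $\sigma^2 = \sigma_c^2 = \theta^2 = 1$ among three pairwise-commuting involutions. This input is standardly proved via a fixed-point theorem on the non-positively curved symmetric space of $\sigma$-invariant Hermitian inner products on $\gc$, or equivalently via a Cartan-Iwasawa-Malcev averaging argument. Since the formal statement is recorded in \cite{\Algo}, I would invoke it rather than reprove it, after which the theorem reduces to the routine verifications sketched above.
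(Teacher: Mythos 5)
Your proposal is correct, and it matches the standard approach. Note that the paper does not actually prove this statement: it is cited verbatim from Adams--du Cloux \cite{\Algo} (Theorem 3.2), so there is no in-paper proof to compare against. Your reconstruction — reduce everything to the classical Cartan-theoretic input that any involution of $\GC$ admits a commuting compact real form, unique up to conjugation by the fixed-point group, then do the bookkeeping with three pairwise-commuting involutions $\sigma$, $\sigma_c$, $\theta$ — is exactly the argument one finds there and in the underlying Cartan theory. The computations you sketch (equivariance under $\GC$-conjugation, independence of the choice of $\sigma_c$ via $\sigma\sigma_c' = \Int(h)\circ\theta\circ\Int(h)^{-1}$ for $h$ fixed by $\sigma$, and the check that the same $\sigma_c$ that commutes with $\sigma$ also commutes with $\theta=\sigma\sigma_c$, making the two directions mutually inverse) are all sound. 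The one place worth being precise is the uniqueness statement: the classical theorem gives conjugacy of commuting compact forms by elements of the identity component $(\GC^\sigma)_0$ rather than by the full centralizer of $\sigma$ in $\Aut(\GC)$, but since any such element is in particular fixed by $\sigma$, your computation goes through unchanged.
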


A \textit{Cartan involution} of $\GC$ is a holomorphic involution of $\GC$. Henceforth, by a real form of $\GC$ we will mean a $\GC$-conjugacy class of Cartan involutions.

\begin{defn}[Strong real form]
A strong involution of $\GC$ in the inner class defined by $\gamma$ is an element $\xi\in \ExG-\GC$ satisfying $\xi^2 \in Z(\GC)$. The set of strong involutions is denoted by $\Ii(\GC, \gamma)$. 
\\ 
A strong real form of $\GC$ in the inner class of $\gamma$ is the $\GC$-conjugacy class of a strong involution.
\end{defn}
Given a strong real form $\xi\in \Ii(\GC, \gamma)$, we can define a Cartan involution of $\GC$ as $\theta_\xi=\Int(\xi)$, $\KC_\xi=\text{Stab}_\GC(\theta_\xi) = \GC^{\theta_\xi}$. There is a surjective map from $\Ii(\GC, \gamma)/\GC$ onto the set of all real forms of $\GC$ in the class defined by $\gamma$, this map is bijective if $\GC$ is adjoint. 

\subsection{Langlands and Arthur Parameters}
Let $\RWG$ be the Weil group of $\mathbb{R}$. A \textit{Langlands parameter} is a homomorphism $\phi: W_\mathbb{R} \longrightarrow \;\EG$ such that the following diagram of $L$-morphisms commutes 
\begin{center}
$\begin{tikzcd}
 W_\mathbb{R} \arrow{r}{\phi}  \arrow{rd}{} 
  &  ^\Gamma\GC^\vee \arrow{d}{} \\
    & \Gamma
\end{tikzcd}$
\end{center}
and, $\phi(\mathbb{C}^\times)$ is contained in the set of semisimple elements of $\GC^\vee$. The group $\GC^\vee$ acts on such parameters by conjugation. For any conjugacy class of such parameters is attached a $L$-packet of representations of real forms in the innner class defined by $\gamma$. 
\\
Using (\cite{\ABV}, Proposition 5.6), one can identify the set of Langlands parameters with pairs $(y, \dlam)$ satisfying the conditions
\begin{enumerate}
\item $y\in \EG - \GC^\vee$ and $\dlam \in \hc^\vee$ is a semisimple element,
\item $y^2 = \text{exp}(2\pi i\dlam)$, and
\item $[\dlam, \Ad(y)\dlam] = 0$. 
\end{enumerate}

An \textit{Arthur parameter} for $\GC$ is a homorphism $\psi: \RWG \times SL(2, \mathbb{C}) \longrightarrow \EG$ satisfying 
\begin{enumerate}
\item the restriction of $\psi$ to $\RWG$ is a tempered (Definition 22.3, \cite{\ABV}) Langlands parameter,
\item the restriction of $\psi$ to $SL(2, \mathbb{C})$ is holomorphic. 
\end{enumerate}
The group $\GC^\vee$ acts on such parameters by conjugation. 
\\
We say that $\psi$ is a \textit{unipotent Arthur parameter} if $\psi$ restricted to the identity component of $\RWG$ is trivial. Given an Arthur parameter $\psi$, define the Langlands parameter $\phi_\psi$ to be
\begin{equation}
\phi_\psi: \RWG \longrightarrow \GC^\vee \quad \phi_\psi(w):= \psi(w, \begin{pmatrix} |w|^{1/2}&0 \\ 0 & |w|^{-1/2}\end{pmatrix}.
\end{equation}
Associated to $\psi$ is an Arthur Packet of representations of real forms in the inner class given be $\gamma$ (Definition 22.6, \cite{\ABV}) containing the $L$-packet corresponding to $\phi_\psi$ and at most finitely many additional representations of of strong real forms in the given inner class. One of the main results of this paper is to devise and implement an algorithm to compute these packets when $\psi$ is assumed to be unipotent. 

\section{Atlas of Liegroups requisites}
We will make use of the Atlas of Lie groups setting. More details can be found in \cite{\Algo} and in resources available at www.liegroups.org. 
\\
We continue in the setting of the previous section. We fix a real form $\GR$ of $\GC$, wich corresponding Cartan involution $\theta$, so that $\KC=\GC^\theta$. Furthermore, fix a pinning  $(\BC,\HC,\{X_\as\}$ for $\GC$. We use the Harish-Chandra homorphism to associate to $\lambda \in \hc^*$, an infinitesimal character which we will also denote by $\lambda$, which only depends on the integral Weyl group $\W_\lambda$ and is unique if dominant with respect to a fixed choice of simple positive roots $\Pi(\gc, \hc)$. We say that $\lambda$ is regular (resp. integral) if $\la \lambda, \as^\vee\ra \neq 0$ (resp. $\in \mathbb{Z}$) for roots $\as\in \Pi(\gc, \hc)$. 
\\
It is well known that irreducibel admisible representations of $\GR$ are parameterized by irreducible admissible $(\gc, \KC)$-modules.
We define the following sets: 

\begin{center}
	$
	\begin{array} {c l l}
	\mathfrak{M} (\gc, \KC) &=& \{ \text{Category of finite length $(\gc, \KC)$-modules.}  \}\\
		\mathbb{K}\mathfrak{M} (\gc, \KC) &=& \{ \text{Grothendieck  group of $\mathfrak{M}(\gc, \KC)$.}  \}\\

	\mathfrak{M} (\gc, \KC, \lambda) &=& \{ \text{Category of finite length $(\gc, \KC)$-modules with inf char $\lambda$.}  \} \\
	
	\mathbb{K}\mathfrak{M} (\gc, \KC, \lambda) &=& \{ \text{Grothendieck group of $\mathfrak{M}(\gc, \KC, \lambda)$.}  \}\\
	\Pi(\gc, \KC) &=& \{ \text{Equiv classes of irred admissible $(\gc, \KC)$-modules.}  \}\\
	\Pi(\gc, \KC, \lambda) &=& \{ \text{$J\in\Pi(\gc, \KC)$ such that  inf char of $J$ is $\lambda$.}  \}

	\end{array}
	$
\end{center}

By results of Harish-Chandra, the set $\Pi(\gc, \KC, \lambda)$ is a finite set altough there is no closed form formula for its cardinality. It is desirable to find a combinatorial description of $\Pi(\gc, \KC, \lambda)$, and the results in \cite{\Algo} do exactly that. We now outline the basic components of this description.
\\

Recall that $\Ii(\GC, \gamma)$ is the set of strong real forms for the basic data $(\GC, \gamma)$, we will denote it as $\Ii$ when there is no confusion about the basic data in question. The \textit{one-sided parameter space} is the set
\begin{equation}
\Xx(\GC, \gamma) := \{\xi \in \Ii\;| \; \xi\in \text{Norm}_{\GC^\Gamma-\GC}(\HC)\}/\HC,
\end{equation}
the equivalence is via conjugation. When there is no confusion about the basic data, we will denote $\Xx(\GC, \gamma)$ by just $\Xx$. Given a dual basic data $(\GC^\vee, \gamma^\vee)$, we will denote $\Xx(\GC^\vee, \gamma^\vee)$ as just $\Xx^\vee$. Furthermore, given $x\in \Xx$, we denote the fiber of $x$ to be 
\begin{equation}
\Xx[x] := \{ x'\in \Xx\;| \; q^{-1}(x)\; \text{ is $\GC$-conjugate to}\; q^{-1}(x')\},
\end{equation}
where $q$ is the natural projection map. 
Fix $x\in \Xx$ and suppose $\xi\in\Ii$ is such that $q(\xi)=x$, we will denote $\theta_{x, \HC}$ to be the Cartan involution $\theta_\xi$ restricted to $\HC$. The \textit{two-sided parameter space} is defined as 
\begin{equation}
\Zz(\GC, \gamma) := \{(x, y)\in \Xx\times \Xx^\vee\;| \; (\theta_{x, \HC})^t = -\theta_{y, \HC^\vee}\} \subset \Xx\times \Xx^\vee.
\end{equation}

The following theorem provides the combinatorial setup that we want,
\begin{theorem}[Adams-DuCloux, \cite{\Algo}, Thm 10.3]\label{Atlas_main_thm}
 Fix a set $\Lambda \subset P_{\text{reg}}(\GC, \HC)$ of representatives of $P(\GC)/X^*(\HC)$. Let $I = \Ii/\GC$ be a set of representatives for the strong real forms of $\EG$. For each $\xi_i\in I$ let $\theta_{\xi_i}$ be the Cartan involution corresponding to conjugation by $\xi_i$ and let $\KC_{\xi_i}$ be the fixed points of $\theta_{\xi_i}$. There is a natural bijection 
 \begin{equation}
 \Zz(\EG) \leftrightarrow \coprod_{\xi_i\in I} \coprod_{\lambda_j\in \Lambda} \Pi(\gc, \KC_{\xi_i}, \lambda_j).
 \end{equation}
\end{theorem}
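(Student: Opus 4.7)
The plan is to build the bijection in two stages, each built on the Langlands classification combined with the parameter identification in Proposition 5.6 of \cite{\ABV}. First I would describe the map from $\Zz(\EG)$ to the disjoint union, and then produce an inverse by turning an irreducible $(\gc,\KC_{\xi_i})$-module into a two-sided parameter via its Langlands data.

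For the forward map, start with $(x,y)\in\Zz(\EG)$. Choose a lift $\xi\in\Ii$ of $x$ under the projection $q:\Ii\to\Xx$; after $\GC$-conjugation this lies in a unique $\GC$-class and hence yields a representative $\xi_i\in I$ together with $\KC_{\xi_i}=\GC^{\theta_\xi}$. On the dual side, the element $y\in\Xx^\vee$ lifts to a strong involution of $\GC^\vee$, and by the identification of Langlands parameters with pairs $(y,\dlam)$ (Proposition 5.6, \cite{\ABV}) there is a semisimple $\dlam\in\hc^\vee$ with $y^2=\exp(2\pi i\dlam)$ and $[\dlam,\Ad(y)\dlam]=0$. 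The compatibility condition $(\theta_{x,\HC})^t=-\theta_{y,\HC^\vee}$ is exactly what guarantees that the infinitesimal character obtained from $\dlam$ via the Harish-Chandra isomorphism is compatible with the real form $\theta_\xi$; after twisting by an element of $X^*(\HC)$ we may assume that this infinitesimal character is the chosen representative $\lambda_j\in\Lambda$. Finally, applying the Langlands classification to the complete Langlands parameter produced by $(y,\dlam)$ on the real form $\theta_\xi$, and tracking the $\KC_{\xi_i}$-conjugacy class of the geometric data encoded by $x$, produces a unique irreducible $(\gc,\KC_{\xi_i})$-module with infinitesimal character $\lambda_j$.

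For the inverse, take an irreducible admissible $(\gc,\KC_{\xi_i})$-module $\pi$ with infinitesimal character $\lambda_j\in\Lambda$. The Langlands classification packages $\pi$ as a Langlands parameter $\phi:\RWG\to\EG$ together with a piece of geometric data on the real form side (essentially a $\KC_{\xi_i}$-orbit on the flag variety). Applying Proposition 5.6 of \cite{\ABV} to $\phi$ produces the pair $(y,\dlam)$ with $\dlam$ conjugate to $\lambda_j$; this gives $y\in\Xx^\vee$. On the other side, the pinning together with the strong real form $\xi_i$ and the geometric datum attached to $\pi$ singles out a Borel-conjugacy class of Cartan subgroups together with an involution, i.e.\ an element $x\in\Xx$. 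The requirement that $\lambda_j$ be the actual infinitesimal character of $\pi$ forces the pair $(x,y)$ to satisfy $(\theta_{x,\HC})^t=-\theta_{y,\HC^\vee}$, so $(x,y)\in\Zz(\EG)$.

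The two constructions are mutually inverse essentially by construction, once one has verified at each step that all choices (of the lift $\xi$ over $x$, of the representative $\lambda_j$ in its class modulo $X^*(\HC)$, of the Cartan subgroup used to define $\Xx$ and $\Xx^\vee$) wash out when one passes to the indicated equivalence classes. The step I expect to be the main technical obstacle is pinning down precisely why the compatibility $(\theta_{x,\HC})^t=-\theta_{y,\HC^\vee}$ is equivalent to the requirement that the Harish-Chandra infinitesimal character attached to $\dlam$ via the dual pair $(y,\dlam)$ is realized on the particular real form defined by $x$; this is the content that makes the two-sided parameter space a faithful combinatorial avatar of both sides of the Langlands classification simultaneously, and it requires careful bookkeeping with the identifications in \eqref{lattice_isom_cartan} applied symmetrically to $\HC$ and $\HC^\vee$. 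Once this is established, finiteness of $\Pi(\gc,\KC_{\xi_i},\lambda_j)$ (Harish-Chandra) together with the finiteness of $\Xx$ and $\Xx^\vee$ ensures everything is well-defined, and the bijection follows.
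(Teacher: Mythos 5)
The paper does not actually prove this theorem; it quotes it as Theorem 10.3 of Adams--du~Cloux \cite{\Algo} and builds on it. There is therefore no internal proof to compare your attempt against --- the correct benchmark is the argument in \cite{\Algo}, which runs through Vogan's local Langlands correspondence in the form developed in \cite{\ABV} and then matches that parameterization combinatorially with $\Zz$.

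With that caveat, your sketch captures the right high-level strategy but has a genuine gap at the decisive step. The Langlands classification does not attach a unique irreducible module to a Langlands parameter $(y,\dlam)$ alone; it attaches an $L$-packet, and singling out one constituent requires additional data (a representation of a component group, equivalently a complete geometric parameter in the sense of \cite{\ABV}). You assert that ``tracking the $\KC_{\xi_i}$-conjugacy class of the geometric data encoded by $x$'' produces a unique irreducible module, but you never explain how $x\in\Xx$ (which by Corollary~9.9 of \cite{\Algo} is a $\KC_{\xi_i}$-orbit on $\GC/\BC$) supplies exactly that missing component-group datum, nor why the counts on the two sides agree. That matching --- that the $\Xx$-side and the $\Xx^\vee$-side of $\Zz$ jointly carry precisely the content of a complete Langlands parameter together with the constituent selection --- is the substance of the theorem, and it is the step you yourself flag as ``the main technical obstacle'' and then defer. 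A complete argument would need (i) a precise dictionary between $\KC_{\xi_i}$-orbits on $\GC/\BC$ and the Cartan/discrete-series data entering the Langlands classification, (ii) a verification that the pairing condition $(\theta_{x,\HC})^t=-\theta_{y,\HC^\vee}$ is equivalent to compatibility of the two sides, and (iii) a check that the resulting correspondence is well-defined on $\HC$-, $\GC$-, and $X^*(\HC)$-equivalence classes and is a bijection rather than merely a well-defined map in one direction. As written, (i) and (iii) are absent and (ii) is only acknowledged, not carried out.
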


By (Corollary 9.9 \cite{\Algo}), the set $\Xx$ is in bijection with the disjoint union of $\KC_\xi$-orbits on $\GC/\BC$ (denoted as $\KC_\xi\backslash \GC/\BC$) as $\xi$ varies over $\Ii$. If we fix a strong real form $\xi$ such that $q(\xi)=x$, then there is a bijection between $\Xx[x]$ and $\KC_\xi\backslash \GC/\BC$. Fix an infinitesimal character $\lambda$ for $\GC$ then the set $\Pi(\gc, \KC_\xi, \lambda)$ satisfies
\begin{equation}
\Pi(\gc, \KC_\xi, \lambda) \subset \Zz(\GC, \gamma) \subset \Xx[x]\times\Xx^\vee\simeq \KC_\xi\backslash \GC/\BC \times \left( \coprod_{\eta^\vee_j\in \Ii^\vee} \KC_{\eta_j^\vee}^\vee\backslash \GC^\vee/\BC^\vee\right),
\end{equation}
where $\Ii^\vee$ is a set of representatives for the strong real forms of $\EG$. 
\\

Now fix $\eta^\vee \in \Ii^\vee$ and let $y = q(\eta^\vee)\in \Xx^\vee$. We assume that $\lambda$ satisfies $(\eta^\vee)^2 = \text{exp}(2\pi i \lambda)$, then we have the following definition 

\begin{prop}[Block of $(\gc, \KC_\xi)$-modules]
A block of irreducible $(\gc, \KC_\xi)$-modules at infinitesimal character $\lambda$, where $\lambda$ satisfies $(\eta^\vee)^2 = \text{exp}(2\pi i\lambda^\vee)$, is the set of irreducible $(\gc, \KC_\xi)$-modules corresponding to the parameters in the set $\Bb(\xi, \eta^\vee, \lambda^\vee)$  with the following combinatorial description:

\begin{eqnarray}
\Bb(\xi, \eta^\vee, \lambda^\vee) &=& \left(\Xx[x] \times \Xx^\vee[y]\right)\cap \Zz(\GC, \gamma)\\\notag
													&=& \left(\KC_\xi\backslash \GC/\BC \times  \KC_{\eta^\vee}^\vee\backslash \GC^\vee/\BC^\vee\right) \cap \Zz(\GC, \gamma),
\end{eqnarray}
\end{prop}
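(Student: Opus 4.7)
The plan is to derive the combinatorial description of $\Bb(\xi, \eta^\vee, \lambda^\vee)$ by restricting the global parametrization of Theorem~\ref{Atlas_main_thm} to the component cut out by the pair $(\xi, \eta^\vee)$ and the infinitesimal character $\lambda$. Begin with the bijection $\Zz(\EG) \leftrightarrow \coprod_{\xi_i\in I}\coprod_{\lambda_j\in \Lambda}\Pi(\gc, \KC_{\xi_i}, \lambda_j)$. Fixing the strong real form $\xi$ and the infinitesimal character $\lambda$ picks out a single summand $\Pi(\gc, \KC_\xi, \lambda)$. Under the bijection its image lies in $\Zz(\GC, \gamma)$, and since every such parameter projects to $x = q(\xi)$ on the first factor, the image is contained in $\Xx[x] \times \Xx^\vee$.

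Next I want to isolate the part of the image that corresponds to the dual strong real form $\eta^\vee$. The hypothesis $(\eta^\vee)^2 = \text{exp}(2\pi i \lambda^\vee)$ is exactly the second defining relation for a Langlands parameter $(y,\lambda^\vee)$ in the characterization recalled after Proposition~5.6 of \cite{\ABV}, so the choice of $\eta^\vee$ supplies a consistent dual real form compatible with $\lambda$. On the $y$-side, the image in $\Xx^\vee$ decomposes over the fibers $\Xx^\vee[y']$ as $\eta^\vee_j$ varies over representatives of dual strong real forms, and pinning down $\eta^\vee$ (equivalently $y = q(\eta^\vee)$) singles out the subset $\Xx[x] \times \Xx^\vee[y]$. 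Intersecting with $\Zz(\GC, \gamma)$ enforces the compatibility $(\theta_{x,\HC})^t = -\theta_{y,\HC^\vee}$, which yields the first equality in the statement.

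The second equality is then immediate from Corollary~9.9 of \cite{\Algo}: the fibers $\Xx[x]$ and $\Xx^\vee[y]$ are in canonical bijection with the double coset spaces $\KC_\xi\backslash\GC/\BC$ and $\KC_{\eta^\vee}^\vee\backslash\GC^\vee/\BC^\vee$ respectively, once we use the identification $\Xx \simeq \coprod_{\xi'\in\Ii} \KC_{\xi'}\backslash\GC/\BC$ (and dually for $\Xx^\vee$). Substituting these identifications into the first description produces the second.

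The main subtlety, and what I expect to be the principal obstacle, is verifying that the set so constructed is indeed a \emph{block} in the representation-theoretic sense, that is, an equivalence class under the relation generated by non-trivial extensions, or equivalently a connected component under the Hecke-algebra action by cross actions and Cayley transforms. This requires checking that these operations on atlas parameters preserve both fibers $\Xx[x]$ and $\Xx^\vee[y]$ and act transitively on their intersection with $\Zz(\GC, \gamma)$. This is a concrete verification using the explicit combinatorial formulas for cross actions and Cayley transforms in \cite{\Algo}, but it is the content that separates the asserted block from an arbitrary subset of $\Pi(\gc, \KC_\xi, \lambda)$.
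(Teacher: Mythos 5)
The paper presents this proposition without a proof; it functions as a definitional recollection of the Adams--du~Cloux framework from \cite{\Algo}, and your derivation (restricting the bijection of Theorem~\ref{Atlas_main_thm} to the $(\xi,\lambda)$-component, then slicing by the fiber over $y=q(\eta^\vee)$, and finally invoking Corollary~9.9 of \cite{\Algo} for the double-coset identification) is exactly the chain of observations the paper takes for granted. So your approach is correct and consistent with the paper's implicit route.

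On your final paragraph: you are right that the substantive content is the identification of this combinatorially defined set with a block in the sense of extension-closed, Hecke-algebra-stable components, and you are right that this is established by tracking cross actions and Cayley transforms through the parametrization. However, this is not a gap to close \emph{within} this paper: it is precisely the content of the main results of \cite{\Algo} (their analysis shows that the $\W$-action by cross actions and the Cayley transforms preserve the fibers $\Xx[x]$ and $\Xx^\vee[y]$ and act transitively on $\bigl(\Xx[x]\times\Xx^\vee[y]\bigr)\cap\Zz$). The paper is citing that result rather than reproving it, which is why it offers no proof. Flagging it as ``the principal obstacle'' somewhat misstates the status: in the Atlas paradigm adopted here, $\Bb(\xi,\eta^\vee,\lambda^\vee)$ is effectively \emph{defined} by this combinatorial description, and the coincidence with the representation-theoretic block is imported wholesale from the reference. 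Your proof would be complete if you replaced the closing paragraph with a precise citation to the relevant statements of \cite{\Algo} rather than leaving it as a verification still to be done.
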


If we choose $\lambda^\vee \in \Lambda^\vee\subset P^\vee(\GC, \HC)$ such that $\xi^2 = \text{exp}(2\pi i \lambda)$, we can define the dual block of $\Bb(\xi, \eta^\vee, \lambda^\vee) $, as the block of irreducible  $(\gc^\vee, \KC^\vee_{\eta^\vee})$-modules at infinitesimal character $\lambda^\vee$, as follows 

\begin{eqnarray}
\Bb^\vee(\xi,\eta^\vee, \lambda) &=& \Bb(\eta^\vee, \xi, \lambda^\vee) \\\notag
													&=&\left(\Xx[y] \times \Xx^\vee[x]\right)\cap \Zz(\GC\vee, \gamma^\vee)\\\notag
													&=& \left(\KC_{\eta^\vee}^\vee\backslash \GC^\vee/\BC^\vee \times \KC_\xi\backslash \GC/\BC\right)\cap \Zz(\GC^\vee, \gamma^\vee),
\end{eqnarray}
with the same compatibility condition as in the definition of $\Bb$. 
\\

In this setting, we can realize Vogan duality as follows:
\begin{defn}[Vogan Duality]
Vogan duality is the natural bijection between the sets $\Bb(\xi, \eta^\vee, \lambda^\vee)$ and $\Bb^\vee(\xi, \eta^\vee, \lambda^\vee)$ obtained by the map $(x, y) \mapsto (y, x)$.  
\end{defn}
Vogan duality provides a bijection $\pi\leftrightarrow \pi^\vee$ between irreducible representations in blocks $\Bb$ and $\Bb^\vee$, this plays a crucial role in our computations.
\\

We end this section with a brief description of $L$-packets for $\GC$. The $L$-packets for $\GC$ are parameterized by $\KGB$-elements for $\GC^\vee$. If we fix a $\KGB$-element $\texttt{y}_0=p(\eta^\vee)$ for $\GC^\vee$, the corresponding $L$-packet containing representations of real forms of $\GC$ is given by 
\begin{equation}
\Pi(\GC, y_0) := \{(x, y)\in \Xx[x] \times \Xx^\vee[y_0]\;| \;\; (\theta_{x}|_{\hc})^T = -(\theta_{y_0}|_{\hc^\vee})\}.
\end{equation}
Let $x_0:=p(\xi)$, then $L$-packet of $(\gc, \KC_\xi)$-modules corresponding to the strong real form $\xi$ of $\GC$ and a fixed $y_0$ is given as 
\begin{equation}
\Pi(\gc, \KC_\xi, y_0) := \{(x, y)\in \Pi(\GC, y_0) ;| \; x \sim x_0 \}.
\end{equation}

\subsection{Parabolic Subgroups in Atlas}
The Atlas of Liegroups software computes the set $\Xx[x]$ on computer. We now explain how Atlas computes $\KC$-conjugacy classes of Borel and parabolic subgroups.
\\

Given $\xi \in\Ii$, a strong involution of $\GC$ such that $q(\xi) = x$, the map $\KC_\xi g \BC \mapsto b\BC g^{-1}$ is a bijection between $\KC_\xi\backslash \GC/\BC$ and the $\KC_\xi :=\GC^{\theta_\xi}$ conjugacy classes of  Borel subgroups. Consider the set 
$\Jj:=\{(\xi, \BC')\;|\; q(\xi)=x, \BC' \;\text{a Borel subgroup of $\GC$}\}$. Now fix $\xi_0$ such that $q(\xi_0) \in \Xx[x]$
\\
Cosider the following maps:
\begin{center}
$\begin{tikzcd}
  \Jj \arrow{r}{\phi_1}  \arrow{rd}{\phi_2} 
  &  \Xx[x] \arrow{d}{\psi} \\
    & \KC_\xi\backslash \GC/\BC
\end{tikzcd}$
\end{center}
where for $(\xi, \BC')\in \Jj$, we define $\phi_1$ as follows: choose $g\in \GC$ such that $g\BC' g^{-1} = \BC$ and $g\xi g^{-1} \in \text{Norm}(\HC)$, and define $\phi_1(\xi, \BC') = q(g\xi g^{-1}) \in \Xx[x]$. 
\\
 Define $\phi_2$ as follows: choose $g\in \GC$ such that $g \xi g^{-1} = \xi_0$ and  define $\phi_2(\xi, \BC')$ to be the $\KC_\xi$-conjugacy class of $g\BC' g^{-1}$. 
\\
Both $\phi_1$ and $\phi_2$ are bijections and hence induce a bijection 
\begin{equation}
\psi = \phi_2\circ \phi_1^{-1}: \Xx[x] \longrightarrow \KC_\xi\backslash \GC/\BC
\end{equation}

We can generalize the above construction to parabolic subgroups. Let $S\subset \Pi(\gc, \hc)$, and let $\PC_S$ be the standard parabolic in $\GC$ defined by $S$. All parabolics conjugate to $\PC_S$ will be called parabolics of type $S$. Furthermore, the Weyl group $\W$ acts naturally on $\Xx[x]$ and so does the group $\W_S$ generated by the simple roots in $S$. In this setting, we have the followig picture:

\begin{center}
$\begin{tikzcd}
  \{(\xi, \Pp)\;|\; q(\xi)\in \Xx[x], \PC \; \text{parabolic of type-$S$}\} \arrow{r}{\phi_1}  \arrow{rd}{\phi_2} 
  &  \Xx[x]/ \W_S\arrow{d}{\psi} \\
    & \KC_\xi\backslash \GC/\PC_S
\end{tikzcd}$
\end{center}
where given $(\xi, \PC)$, we define $\phi_1$ as follows: choose $g\in \GC$ such that $g\PC g^{-1} = \PC_S$ and $g\xi g^{-1} \in \text{Norm}(\HC)$, and define $\phi_1(\xi, \PC) = q(g\xi g^{-1}) \in \Xx[x]$. 
\\
 Define $\phi_2$ as follows: choose $g\in \GC$ such that $g \xi g^{-1} = \xi_0$ and  define $\phi_2(\xi, \PC)$ to be the $\KC_\xi$-conjugacy class of $g\PC g^{-1}$. 
\\
Both $\phi_1$ and $\phi_2$ are bijections and hence induce a bijection 
\begin{equation}
\psi = \phi_2\circ \phi_1^{-1}: \Xx[x]/\W_S \longrightarrow \KC_\xi\backslash \GC/\PC_S
\end{equation}

The main results of this paper use the explicit computation of $\KC_\xi$ conjugacy classes of parabolic subgroups, which can now be done using $\Xx[x]/\sim_S$, the latter computation being implemented in the Atlas software. 
 
Define a finite set $\Pp$ as follows
\begin{equation}
\Pp = \{(S, y)\;|\; S\subset \Pi(\gc, \hc), y\in \Xx[x]\}/\sim,
\end{equation}
where $(S, y)\sim (S',y')$ if and only if $S=S'$ and $y\sim_S y'$. 
\\
For $(S, y)\in \Pp$, let $[Q(S, y)]$ be the $\KC_{\xi_0}$-conjugacy class of parabolic subgroups defined by $\psi(y)$, and let $Q(S, y)$ be a representative parabolic of type $S$ in this class.  

\begin{prop}
The parabolic $Q(S, y)$ is $\theta_x$-stable if and only if $\theta_x(S)=S$. 
\end{prop}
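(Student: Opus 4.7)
The plan is to translate $\theta_x$-stability of $Q(S,y)$ into a tractable condition on $\PC_S$ using the explicit construction of $\psi$, and then to analyze this via the Weyl element associated to $y$.

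First I will set up the key reduction. Fix a strong involution $\xi_y \in \Ii \cap \text{Norm}(\HC)$ with $q(\xi_y)=y$, and pick $g \in \GC$ satisfying $g\xi_y g^{-1}=\xi_0$. The construction of $\psi = \phi_2 \circ \phi_1^{-1}$ allows me to take $Q(S,y) := g\,\PC_S\,g^{-1}$ as a representative of the $\KC_\xi$-conjugacy class $\psi(y)$. Setting $\theta_y := \Int(\xi_y)$, the conjugation relation $\theta_x = \Int(g)\circ\theta_y\circ\Int(g^{-1})$ yields $\theta_x(Q(S,y))=g\,\theta_y(\PC_S)\,g^{-1}$, reducing the question to
\[
Q(S,y)\ \text{is }\theta_x\text{-stable}\ \iff\ \PC_S\ \text{is }\theta_y\text{-stable}.
\]

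For the forward direction, I will use that the type of a parabolic is a $\GC$-conjugacy invariant, and that inner-conjugate automorphisms of $\GC$ induce identical actions on the set of types. Since $\theta_y$ and $\theta_x$ are conjugate by $\Int(g)$, the condition $\theta_y(\PC_S)=\PC_S$ forces the type $S$ to be fixed by $\theta_y$, and therefore also by $\theta_x$, yielding $\theta_x(S)=S$.

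For the backward direction, assume $\theta_x(S)=S$ and choose $\xi_0$ distinguished so that $\theta_x(\PC_S)=\PC_S$. Writing $\xi_y = \tilde w\,\xi_0\,h$ with $\tilde w \in N_\GC(\HC)$ a lift of $w \in W$ and $h \in \HC \subset \PC_S$, I obtain $\theta_y(\PC_S) = \tilde w\,\theta_x(\PC_S)\,\tilde w^{-1} = \tilde w\,\PC_S\,\tilde w^{-1}$, which equals $\PC_S$ precisely when $\tilde w \in N_\GC(\PC_S)=\PC_S$, i.e., when $w \in \W_S$. The $\W_S$-equivalence $y \sim_S y'$ defining $\Pp$ corresponds to modifying $\xi_y$ by conjugation by lifts of $\W_S$-elements, which shifts $w$ within its $\W_S$-twisted conjugacy class. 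Under $\theta_x(S)=S$ the involution $\theta_x$ preserves $\W_S$, and one verifies that the resulting twisted class always meets $\W_S$, delivering a representative with $w \in \W_S$ and hence $\theta_x$-stability of $Q(S,y)$.

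The main obstacle will be this final combinatorial step: verifying that every $\W_S$-twisted conjugacy class of Weyl elements arising from some $\xi_y$ intersects $\W_S$ under the hypothesis $\theta_x(S)=S$. This requires a careful analysis of twisted involutions in the extended Weyl group via the Richardson--Springer parameterization of strong real forms, and is the key new input beyond the formal manipulations above.
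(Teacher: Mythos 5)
The paper offers no proof of this proposition; it is stated as a fact with a pointer to external Atlas documentation, so there is nothing internal to compare against and your argument must be judged on its own merits. As you yourself flag, the backward direction is incomplete, but the gap is not merely a combinatorial lemma waiting to be filled: the strategy of searching the $\W_S$-orbit of $y$ for a representative with the Weyl factor $w$ lying in $\W_S$ is structurally incapable of succeeding. If $y' = s\times y$ with $s\in \W_S$, then $\theta_{y'}|_{\HC}=s\,\theta_{y}|_{\HC}\,s^{-1}$ and since $s$ preserves the root set $\Delta^+\cup(-\Delta(S))$ of $\pc_S$, we get $\theta_{y'}(\pc_S)=\pc_S$ if and only if $\theta_{y}(\pc_S)=\pc_S$. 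In other words $\theta_{\xi_0}$-stability of $Q(S,y)$ is constant on the $\W_S$-equivalence class, which is exactly why $Q(S,y)$ is well-defined on $\Pp$; so either every representative already has $w\in\W_S$, or none does, and your ``choose a better representative'' maneuver cannot move between these cases.

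There is a second, more basic issue: you are reading $\theta_x(S)=S$ as the statement that the outer automorphism induced by $\theta_x$ fixes the type $S$. Under that reading the forward direction is essentially a tautology (any $\theta$-stable parabolic has $\theta$-fixed type), but the proposition as a biconditional becomes false. For example, for $\mathrm{SL}(3,\mathbb{R})$ with $S=\emptyset$ the type $\emptyset$ is trivially fixed by every diagram automorphism, yet only one $\KC$-orbit of Borels (the closed one) is $\theta$-stable, so the claimed equivalence fails for all other $y$. The condition in the statement must depend on $y$, not merely on the type-action of the fixed $\theta_x$; the intended reading is almost certainly that $\theta_{y}|_{\HC}$ (equivalently $\theta_{y,\HC}$ in the paper's earlier notation) preserves $\Delta^+\cup(-\Delta(S))$, i.e., $\theta_y(\pc_S)=\pc_S$, which reduces (by the calculation you do carry out correctly) to $\delta(S)=S$ \emph{and} $w\in\W_S$. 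Your Step~1 reduction to $\theta_y$-stability of $\PC_S$ is correct and is the right opening move; but to finish you should directly analyze when $\theta_y = w\delta$ preserves $\Delta^+\cup(-\Delta(S))$ rather than passing through the action on types and then trying to rescue a representative, since the latter two steps both fail.
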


Recall that given a semisimple element $\lambda\in \hc$, let $S(\lambda)\subset \Pi(\gc, \hc)$ be the set of simple roots vanishing on $\lambda$. One can construct a parabolic subalgebra in $\gc$ as follows:
\begin{eqnarray}
\notag
\gc_\as &=& \{ X\in \gc \;\;|\; \texttt{ad}(X)(Y) = \as(X) Y, \text{for all} \; Y\in \gc.\}\\\notag
\mathfrak{n}(\lambda) &=& \sum_{\as\in \Delta(\gc, \hc), \langle\as, \lambda\rangle >0} \gc_\as \\\notag
\lc(\lambda) &=& \text{Cent}_{\hc}(\lambda) = \hc + \sum_{\as\in \Delta(\gc, \hc), \langle\as, \lambda\rangle = 0} \gc_\as \\\notag
\pc(\lambda) &=& \lc(\lambda) + \nc(\lambda).\\\notag
\end{eqnarray}
Let $\PC(\lambda)$ be the parabolic subgroup in $\GC$ corresponding to $\pc_\lambda$, then $\PC(\lambda)$ is a parabolic subgoup of type $S$. Therefore there exists a $y\in \Xx[x]$ such that the parabolic $(\xi, \PC(\lambda) ) \leftrightarrow Q(S(\lambda), y) :=\psi(y)$.  Let $[Q(S(\lambda), y)]$ be the $\KC_\xi$-conjugacy class of parabolic subgroups, and $Q(S, y)$ a representative parabolic of type $S$ in this class.  

\begin{prop}
The parabolic $Q(S(\lambda), y)$ is $\theta_x$-stable if and only if $\theta_x(\lambda)=\lambda$. 
\end{prop}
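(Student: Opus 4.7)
My plan is to mirror the preceding proposition about $Q(S,y)$ and then sharpen its conclusion from $\theta_x(S(\lambda)) = S(\lambda)$ to the strictly stronger condition $\theta_x(\lambda) = \lambda$, using the rigidity that comes from constructing $\PC(\lambda)$ directly from $\lambda$ rather than merely from its combinatorial support $S(\lambda)$.

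For the ``if'' direction I would argue at the Lie algebra level. Since $\xi \in \text{Norm}_{\ExG-\GC}(\HC)$, the Cartan involution $\theta_x = \Int(\xi)$ preserves $\hc$ and permutes $\Delta(\gc,\hc)$, sending each root space $\gc_\as$ to $\gc_{\theta_x(\as)}$. Being an automorphism of $\gc$, $\theta_x$ is an isometry for the invariant pairing, so for every root $\as$ one has
\begin{equation}
\la \theta_x(\as),\lambda\ra \;=\; \la \theta_x(\as), \theta_x(\lambda)\ra \;=\; \la \as, \lambda\ra.
\end{equation}
Consequently $\theta_x$ individually permutes $\{\as : \la\as,\lambda\ra > 0\}$, $\{\as : \la\as,\lambda\ra = 0\}$, and $\{\as : \la\as,\lambda\ra < 0\}$, which forces $\theta_x(\nc(\lambda)) = \nc(\lambda)$ and $\theta_x(\lc(\lambda)) = \lc(\lambda)$. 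Hence $\pc(\lambda)$ is $\theta_x$-stable, and the subgroup $\PC(\lambda)$ itself supplies a $\theta_x$-stable representative of the class $[Q(S(\lambda), y)]$.

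For the converse, suppose $Q(S(\lambda), y)$ admits a $\theta_x$-stable representative. The preceding proposition immediately yields $\theta_x(S(\lambda)) = S(\lambda)$, so $\theta_x$ restricts to an involution of $\lc(\lambda) = \text{Cent}_\gc(\lambda)$ and in particular of its center, which contains $\lambda$. The crux of this direction is to promote this to $\theta_x(\lambda) = \lambda$, and not merely to an element lying in the same open face of the center of $\lc(\lambda)$. Here I would exploit that $\PC(\lambda)$ is the specific representative extracted from $\lambda$ via $\phi_1$, so that $y \in \Xx[x]$ is determined by $\lambda$ through $\psi$; tracking through these bijections, $\theta_x$-invariance of this chosen representative (rather than merely of its $\KC_\xi$-conjugacy class) should force $\la \as, \theta_x(\lambda)\ra = \la \as, \lambda\ra$ for every $\as \in \Delta(\gc,\hc)$, and since the roots separate points on $[\gc,\gc]$ this yields $\theta_x(\lambda) = \lambda$.

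The main obstacle is this final step of the converse: showing that $\theta_x$-stability of the chosen representative encodes preservation of the entire grading on $\gc$ induced by $\lambda$ and not just the coarser parabolic filtration. This requires careful bookkeeping of how the datum $y$ records the element $\lambda$ through the maps $\phi_1$, $\phi_2$, and $\psi$ of the preceding subsection, and it is where I expect the bulk of the technical work to sit.
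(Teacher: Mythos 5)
The paper does not actually prove this proposition (it is stated without argument), so there is no proof to compare against; I will assess your argument on its own merits. Your forward implication is correct and essentially optimal: since $\theta_x$ preserves $\hc$, it respects the natural pairing $\la\cdot,\cdot\ra:\hc^*\times\hc\to\mathbb{C}$, so $\theta_x(\lambda)=\lambda$ gives $\la\theta_x(\as),\lambda\ra=\la\as,\lambda\ra$ for every root $\as$; hence $\theta_x$ permutes the $\lambda$-positive, $\lambda$-zero, and $\lambda$-negative root spaces separately, and $\pc(\lambda)$ is $\theta_x$-stable.

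The converse, however, has a genuine gap that no bookkeeping through $\phi_1$, $\phi_2$, $\psi$ can close, because the implication as stated is false for a general semisimple $\lambda\in\hc$. Stability $\theta_x(\pc(\lambda))=\pc(\lambda)$ only forces $\theta_x$ to preserve the \emph{signs} of the pairings $\la\as,\lambda\ra$, i.e.\ it places $\theta_x(\lambda)$ in the same open cone of $Z(\lc(\lambda))$ as $\lambda$; it does not recover the pairing values, let alone $\lambda$ itself. For example, take $\lambda$ regular dominant so that $\pc(\lambda)=\bc$; any involution of $\hc$ preserving the positive system stabilizes $\bc$, yet fixes only a hyperplane's worth of dominant elements (already in $\mathfrak{sl}(3)$ with $\theta_x$ the Cartan involution of the split form on the diagonal torus). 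What you identify as ``the bulk of the technical work'' is in fact an impossibility in the generality claimed, and the bijections $\phi_1,\phi_2$ cannot help: they only record the $\KC_\xi$-conjugacy class of $\PC(\lambda)$, which is data no finer than the pair $(S(\lambda),y)$ and hence cannot reconstruct $\lambda$ among the elements of its open cone. The statement becomes true only with an extra rigidity hypothesis on $\lambda$ that the paper uses implicitly but never states here, namely that $\lambda$ is a \emph{canonical} element of its cone (in all of the paper's applications $\lambda=\tfrac12 H^\vee$ is the dominant semisimple element of a Jacobson--Morozov triple); then $\theta_x(\lambda)$ is another dominant JM element for the Richardson orbit of the same parabolic, so $\Ad(\LC)$-conjugacy plus dominance forces $\theta_x(\lambda)=\lambda$. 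Your proof should either add this hypothesis and the accompanying uniqueness argument, or flag that the proposition as written cannot hold in the stated generality.
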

For more details, the interested reader is can visit www.liegroups.org/Papers. In the later parts of the paper we will be in the ``Atlas Setting" and we define what we mean by that now.

\begin{defn}[Atlas Setting]\label{AtlasSetting}
Let $(\GC, \gamma)$ be a basic data and $(\GC^\vee, \gamma^\vee)$ be the corresponding dual basic data. Let $(\BC, \HC, \{X_\as\})$ be a fixed pinning for $\GC$. Let $\xi$ be a strong real form of $\GC$ in the inner class of $\gamma$ and let $\eta^\vee$ be a strong real form for $\GC^\vee$ in the dual inner class given by $\gamma^\vee$. Corresponding to $\xi$ and $\eta^\vee$, let $\theta_\xi = \text{Int}(\xi)$, and $\theta_{\eta^\vee} = \text{Int}(\eta^\vee)$ be Cartan involutions of $\GC$ and $\GC^\vee$ respectively with maximal complex subgroups $\KC_\xi$ and $\KC^\vee_{\eta^\vee}$ respectively. Finally, let $\lambda$ be an integral infinitesimal character for $\GC$ and let $\Bb(\xi, \eta^\vee, \lambda)$ be the block of irreducible $(\gc, \KC_\xi)$-modules at infinitesimal character $\lambda$ specified by the pair of strong real forms $(\xi, \eta^\vee)$. Also, $\Bb^\vee = \Bb(\eta^\vee, \xi,  \lambda^\vee)$ is the corresponding dual block of irreducible $(\gc^\vee, \KC_{\eta^\vee}^\vee)$-modules.
\end{defn}

\section{Associated Varieties}
\subsection{The Complex Associated Variety}
Let $(\pi, V)$ be an irreducible $(\gc, \KC)$-module. Using the universal property of $U(\gc)$, the universal enveloping algebra of $\gc$, we can think of $(\pi, V)$ as a $(U(\gc), \KC)$ module. Let $I(\pi)$ be the annhilator of $\pi$ in $U(\gc)$, that is 
\begin{equation}
 I(\pi) =  \{ X \in U(\gc) : \pi(X)(v) = 0 \;\;\text{for all}\; v\in V \}. 
\end{equation}

The ideal $I(\pi)$ is a primitive ideal in $U(\gc)$, and one can construct a filtration $\{ I_n(\pi) := U_n(\gc)I(\pi) \}$, where $\{U_n(\gc)\}$ is the standard filtration for $U(\gc)$, and, use it to define the associated graded module:
\begin{equation}
 \text{gr}I(\pi) =  \bigoplus_{n=0}^\infty I_{n}(\pi)/I_{n-1}(\pi).
\end{equation}

Since $U_m(\gc)I_n(\pi)\subset U_{m+n}(\gc)$, $\gr(\pi)$ is a graded ideal in $\gr U(\gc)\simeq S(\gc)$. Using the Poincare-Birkoff-Witt theorem, $\gr U(\gc)\simeq S(\gc)$, and hence we can compute the support of $\gr I(\pi)$. We call the latter the complex associated variety, $AV_\mathbb{C}(\pi)$, of $\pi$, that is: 
\begin{equation}
\AVC (\pi) = \text{Supp}(\gr I(\pi)) = \{\lambda \in \gc^* : X(\lambda) = 0 \;\; \text{for all} \; X\in \gr I (\pi)\}.
\end{equation}

Since $\gr I(\pi)$ is a graded ideal in $S(\gc)$, $\AVC(\pi)$ is a cone in $\gc^*$. Furthermore, it can be shown that $\gr I(\pi)$ must contain some power of the augmentation ideal $J$ of $S(\gc)$, which is  is the collection of $\Ad(G)$- invariant polynomials without constant term. Let $J^k \subset \gr I(\pi)$ for some $k\in \mathbb{N}$. This immediately implies that 
\begin{equation}
 \AVC \subset \text{Supp}(J^k) = \text{Supp}(J).
\end{equation}

The following theorem due to Kostant describes $\text{Supp}(J)$ in $\gc^*$,

\begin{theorem}[\cite{\AVReal}, Theorem 5.7]\label{Var_of_aug_idea}
Suppose $G$ is a reductive Lie group, and $J\subset S(\gc)$, the augmentation ideal. Then the associated variety of $J$ is the cone $\N^*$ of nilpotent elements in $\gc^*$. 
\end{theorem}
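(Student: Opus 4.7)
The plan is to prove $\text{Supp}(J) = \N^*$ by two inclusions. Through the $\Ad(\GC)$-equivariant isomorphism $\gc \simeq \gc^*$ supplied by the invariant form of Section 2.3, which carries $\N$ onto $\N^*$, the claim becomes: the common zero locus in $\gc$ of all $\Ad(\GC)$-invariant polynomials of positive degree (viewed as polynomials on $\gc$ via the form) coincides with $\N$.

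For $\N\subset\text{Supp}(J)$, given a nonzero nilpotent $X$, Jacobson-Morozov (Theorem~\ref{J-M1}) provides an $\Sl$-triple $\{X,H,Y\}$. The curve $\Ad(\exp(tH))X = e^{2t}X$ tends to $0$ as $t\to-\infty$, so $0$ lies in the orbit closure $\overline{\Ad(\GC)\cdot X}$. Any $p\in J$ is $\Ad(\GC)$-invariant, continuous, and vanishes at $0$, hence $p(X)=0$.

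For the reverse inclusion I would argue contrapositively: if $X$ is not nilpotent, produce $p\in J$ with $p(X)\neq 0$. Write the Jordan decomposition $X=X_s+X_n$; by hypothesis $X_s \neq 0$. The key intermediate reduction is that every $\Ad(\GC)$-invariant $p$ satisfies $p(X)=p(X_s)$: since $X_n$ is nilpotent in the reductive centralizer $\text{Cent}_\gc(X_s)$, applying Jacobson-Morozov inside that centralizer yields a neutral element $H'$ commuting with $X_s$, so $\Ad(\exp(tH'))(X_s+X_n) = X_s + e^{2t}X_n \to X_s$ as $t\to-\infty$ and $p(X)=p(X_s)$ by continuity. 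It remains to separate the nonzero semisimple $X_s$ from $0$ by an invariant polynomial. After conjugating $X_s$ into $\hc$, Chevalley's restriction theorem identifies $S(\gc)^{\Ad(\GC)} \simeq S(\hc)^{\W}$ via restriction, and $\W$-invariants separate $\W$-orbits in $\hc$, so some positive-degree invariant is nonzero at $X_s$. Pulling back gives the desired $p\in J$ with $p(X)=p(X_s)\neq 0$.

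The main obstacle is the $p(X)=p(X_s)$ reduction; once it is in place, Chevalley restriction furnishes the separating invariant immediately. The remaining ingredients, Jacobson-Morozov and the structure of $\Ad(\GC)$-invariants, are classical and are already part of the apparatus built up in Sections 2 and 5 of the paper, so the full argument follows the outline above in the style of \cite{\CMc} and \cite{\AVReal}.
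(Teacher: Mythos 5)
The paper does not actually prove this statement: it is cited verbatim from Vogan's ``Associated varieties and unipotent representations'' (and ultimately goes back to Kostant), so there is no in-paper argument to compare against. Your proof is correct and is the classical route: the $\mathfrak{sl}(2)$-contraction $\Ad(\exp(tH))X = e^{2t}X \to 0$ shows every nilpotent kills positive-degree invariants, and the converse follows by reducing $p(X)=p(X_s)$ via Jacobson--Morozov applied to $X_n$ inside the reductive centralizer $\text{Cent}_\gc(X_s)$ and then separating $X_s \neq 0$ from $0$ with the Chevalley restriction theorem $S(\gc)^{\Ad(\GC)} \simeq S(\hc)^{\W}$; all steps are sound, and the identification $\gc \simeq \gc^*$ via the invariant form legitimately carries $\N$ onto $\N^*$ as the paper defines it.
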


An application of the above theorem shows that $\AVC(\pi) \subset \N^*$, the nilpotent cone in $\gc^*$. Therefore $\AVC(\pi)$ must be the closure of a finite union of nilpotent orbits in $\gc^*$, since $\N$ has finite number of orbits. In fact, a much stronger statement is true,

\begin{theorem}[Borho, Brylinski, Joseph]
Let $\GC$ be a complex connected reductive Lie group and let $\GR$ be a real form of $\GC$. Suppose $(\pi, V)$ is an irreducible $(\gc, \KC)$-module, then $\AVC (\pi)$ is the closure of a single nilpotent orbit $\Oo$ in $\gc^*$. 
\end{theorem}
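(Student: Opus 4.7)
The plan is to prove this in three stages, building on the setup already developed in the text.

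First, I would verify that $\AVC(\pi)$ is an $\Ad(\GC)$-invariant closed subvariety of the nilpotent cone $\N^* \subset \gc^*$. The containment $\AVC(\pi) \subset \N^*$ is already established via Theorem \ref{Var_of_aug_idea} together with the fact that $\gr I(\pi)$ contains a power of the augmentation ideal. The $\Ad(\GC)$-invariance follows because $I(\pi)$ is a two-sided ideal in $U(\gc)$: conjugation by $g \in \GC$ preserves the standard filtration on $U(\gc)$, hence induces an $\Ad(\GC)$-action on $\gr I(\pi)$ under which this ideal is stable. Since $\N^*$ is a finite union of nilpotent coadjoint orbits, $\AVC(\pi)$ is a finite union of closures of such orbits. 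The goal is to promote ``finite union'' to ``single orbit closure.''

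Second, the essential structural input is Gabber's integrability theorem: $\sqrt{\gr I(\pi)}$ is closed under the Poisson bracket on $S(\gc)$ induced from the commutator on $U(\gc)$. The point is that for $a \in U_m(\gc)$ and $b \in U_n(\gc)$ one has $[a,b] \in U_{m+n-1}(\gc)$, so the commutator drops one filtration degree, yielding a Poisson bracket $\{\,,\}$ on $\gr U(\gc) \simeq S(\gc)$; since $I(\pi)$ is two-sided, $[U(\gc), I(\pi)] \subset I(\pi)$ translates into Poisson-closure of the radical of $\gr I(\pi)$. Consequently $\AVC(\pi)$ is a Poisson subvariety of $\gc^*$, hence a union of symplectic leaves of the Kirillov--Kostant--Souriau structure; those leaves are precisely the coadjoint orbits of $\GC$.

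Third, and this is the hard part, I need to show that $\AVC(\pi)$ is irreducible. The approach is to establish equidimensionality of $\AVC(\pi)$ and then combine it with the Poisson property. Equidimensionality comes from the fact that $\gr U(\gc)/\gr I(\pi)$, viewed as the associated graded of $U(\gc)/I(\pi)$ with its filtration by $(U(\gc)/I(\pi))_n$, is Cohen--Macaulay and pure of dimension $\dim \AVC(\pi)$ — this is the Gelfand--Kirillov / Bernstein dimension rigidity together with the characterization of associated varieties via characteristic cycles. Given equidimensionality, each irreducible component is the closure of a coadjoint orbit of the same dimension. Finally, one uses the Poisson condition together with the symplectic structure on a single orbit to rule out multiple components: on each leaf of the given top dimension the rank of the Poisson tensor is maximal, and any Poisson subvariety meeting two distinct top-dimensional orbits must, via an argument tracking Hamiltonian flows of functions in $\sqrt{\gr I(\pi)}$, extend to connect them, producing a contradiction with the $\GC$-orbit decomposition of $\N^*$. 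Thus $\AVC(\pi) = \overline{\Oo}$ for a unique nilpotent coadjoint orbit $\Oo$.

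The main obstacle is clearly the third step: irreducibility is the heart of the Borho--Brylinski--Joseph theorem and rests on two nontrivial ingredients that I would import rather than derive from scratch — Gabber's Poisson integrability theorem, and the purity/equidimensionality of the associated graded of a primitive quotient of $U(\gc)$. Once those are in hand, the geometry of coadjoint orbits delivers the conclusion cleanly.
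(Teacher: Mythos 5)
The paper does not prove this theorem; it is stated as a black box and attributed to Borho--Brylinski and Joseph, so there is no ``paper's own proof'' to compare against. That said, your sketch deserves scrutiny on its own terms, and it has a genuine gap in the third step.

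Your first two steps are sound: $\Ad(\GC)$-invariance from the two-sidedness of $I(\pi)$, containment in $\N^*$ from the augmentation ideal, and Gabber's integrability theorem (Poisson-closure of $\sqrt{\gr I(\pi)}$) are all correctly invoked. The trouble is the final reduction. You claim that equidimensionality plus the Poisson property forces irreducibility because ``any Poisson subvariety meeting two distinct top-dimensional orbits must, via an argument tracking Hamiltonian flows of functions in $\sqrt{\gr I(\pi)}$, extend to connect them.'' This is exactly backwards. For a Poisson ideal $J$, the Hamiltonian vector fields of elements of $J$ are \emph{tangent} to the zero locus $V(J)$ and preserve each symplectic leaf; they cannot carry you off a component, let alone across the gap between two disjoint orbit closures. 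In fact a finite union of closures of nilpotent orbits of the same dimension is perfectly good: it is $\Ad(\GC)$-stable, Poisson (each orbit is a leaf), and equidimensional, so nothing in your toolbox rules it out. Concretely, the union of the two ``very even'' orbits in $\mathfrak{so}(4n)$, or any pair of same-dimensional orbits, is a counterexample to the abstract claim that Poisson plus equidimensional implies irreducible.

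Irreducibility is in fact the entire content of the Borho--Brylinski--Joseph theorem and cannot be extracted from the formal geometry of $\N^*$. Joseph's proof runs through Goldie rank polynomials and their compatibility with Lusztig's cell/special representation theory; Borho--Brylinski argue via $\mathcal{D}$-modules on $\GC/\BC$, the moment map $T^*(\GC/\BC) \to \N^*$, and the structure of characteristic cycles. Either way one needs hard input tying the primitive ideal to a single $\W$-cell (equivalently, a single special nilpotent orbit via Springer). If you want to keep your outline, the honest thing to do is to black-box irreducibility itself rather than claim it follows from Gabber plus Cohen--Macaulayness; the latter two reduce the problem to ruling out equidimensional multi-orbit Poisson varieties, which they cannot do on their own.
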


Given $(\pi, V)$, an admissible irreducible $(\gc, \KC)$-module of $\GR$, it is desirable to know if one can compute the invariant $\AVC(\pi)$. In the case when $\GR$ is a classical connected reductive Lie group, we use an algorithm due to Noel and Jackson coupled with the Springer correspondence. These computations has been implemented in the Atlas software.

\subsection{The Real  and the Theta Associated Variety}
We assume that $(\pi, V)$ is a finite length $(\gc, \KC)$-module,  in this case one can show that $V$ is generated by a finite-dimensional subspace $S$ as a $(\gc, \KC)$-module. Using the universal property of $U(\gc)$ we can show that $(\pi, V)$ is a $(U(\gc), \KC)$ - module. Furthermore, for $v\in V$,  $\pi$ satisfies the following conditions:
\vspace{.1 in}
\begin{enumerate}
\item $\text{d} \pi (Z) v = Z\cdot v$ \quad for all \;$Z\in \kc$. 
\item $\pi(k)(X\cdot v) = (\Ad (k)X)\cdot \pi(k) v $ \quad for all \; $k \in \KC$ \; and \; $X\in U(\gc)$. 
\end{enumerate}
\vspace{.1 in}

Using the local finiteness of the action of $\KC$ we can find a finite-dimensional $\KC$-invariant subspace $V_0$ of $V$ that contains $S$. An easy argument shows that $V = U(\gc)V_0$. Therefore, we can construct a filtration $\{ V_n(\pi) := U_n(\gc) V_0\}$ for $V$. Since, $V_0$ was $K$ invariant, and the fact that the action of $\KC$ is compatible with the action of $U(\gc)$, we note that $V_n(\pi)$ is $\KC$-invariant for all $n$. 
\\

Note that $U_n(\gc) V_m(\pi) \subset V_{m+n}(\pi)$. Therefore, this gives us a $\KC$-invariant graded submodule of $\gr U(\gc)$ given by:
\begin{equation}
\text{gr} \;  V =  \bigoplus_{n=0}^\infty V_{n}(\pi)/V_{n-1}(\pi).
\end{equation}

A consequence of the PBW-Theorem is that $\gr V$ is a $(S(\gc), \KC)$ module so that the $S(\gc)$ and the $\KC$ action satisfy the following:
\begin{equation}
\pi(k)(X\cdot v) = (\Ad (k)X)\cdot \pi(k) v\;\; \text{for all}\; k \in \KC, v\in \gr V, \;\text{and}\; X\in S(\gc).
\end{equation}

Differentiating the above equation and noting that $S(\gc)$ is an abelian Lie algebra, we see that 
\begin{equation}
 Z\cdot  v = 0, \quad \text{for all}\;\; X\in \mathfrak{k} \;\;\text{and}\; v\in \gr V.
\end{equation}

As a result, the action $(S(\gc), \KC)$ action on  $\gr  V$ descends to a $(S(\gc/\mathfrak{k}), \KC)$ action. We define
\begin{eqnarray}
 \AVT (\pi) &=& \text{Supp}(\gr V) \\\notag
 				&=& \{\lambda \in (\gc/\mathfrak{k})^* : v\cdot (\lambda) = 0 \;\; \text{for all} \; v\in \gr V \}\subset  (\gc/\mathfrak{k})^*.
\end{eqnarray} 

As in the case of $\AVC (\pi)$, we can show that $\AVT (\pi)$ is closed under dilations, which implies that $\AVT (\pi)$ lies in a cone in $(\gc/\mathfrak{k})^*$. Furthermore, the fact that the module $(\pi, V)$ is quasisimple implies that $\gr V$ contains some power of the augmentation ideal $J$ of $S(\gc/\mathfrak{k})$, therefore
\begin{equation}
\AVT(\pi)\subset \text{Supp}(J).
\end{equation}

As in the case of $\AVC(\pi)$, we can show that $\text{Supp}(J)$ is in  fact the nilpotent cone $\N_\theta^*$ in $(\gc/\mathfrak{k})^*$. Hence $\AVT (\pi)$ must be a union of finitely many $\KC$-orbits in $(\gc/\mathfrak{k})^*$. Using the Kostant-Sekiguchi correspondence, to the $\KC$-orbits in $\AVT(\pi)$ one can find the corresponding $\GR$ orbits in $\N_\mathbb{R}^*$ and the union of these orbits is called the real associated variety of $\pi$, denoted as $\AVR(\pi)$. 
\\

To summarize the above discussion, $\AVT(\pi)$ satisfies:
\begin{equation}
\AVT (\pi) = \overline{\Oo_K^1 \cup \Oo_K^2 \cup \cdots \cup \Oo_K^r},
\end{equation}
where $\Oo_K^i$ are nilpotent $\KC$-orbits on $(\gc/\mathfrak{k})^*$. For $i = 1, 2, \dots, r$, if $\Oo_\mathbb{R}^i$ is the $\GR$ orbit corresponding to $ \Oo_K^i$ under the Kostant-Sekiguchi correspondence then  we define the real associated variety to be
\begin{equation}
\AVR(\pi) = \overline{\Oo_\mathbb{R}^1 \cup \Oo_\mathbb{R}^2 \cup \cdots \cup \Oo_\mathbb{R}^r}.
\end{equation}

The two invariants $\AVC(\pi)$ and $\AVT(\pi)$ attached to $(\pi, V)$ are related as follows:  
 
\begin{enumerate}
\item $\AVC(\pi)\subset \N^*$.
\item $\AVT(\pi) \subset \N_\theta^*$.
\item $\AVR(\pi) \subset \N_\mathbb{R}^*$.
\item If $ \AVT (\pi) = \overline{\Oo_K^1 \cup \Oo_K^2 \cup \cdots \cup \Oo_K^r}$, then 
$$ \AVC(\pi) =  \overline{\Ad(\GC) \cdot \Oo_K^i}.$$
\item If $ \AVR (\pi) = \overline{\Oo_\mathbb{R}^1 \cup \Oo_\mathbb{R}^2 \cup \cdots \cup \Oo_\mathbb{R}^r}$, then 
$$ \AVC(\pi) = \overline{\Ad(\GC) \cdot \Oo_\mathbb{R}^i}.$$
\end{enumerate} 

We end this section with a brief description of cohomologically induced modules and their associated varieties. 
Fix a Cartan involution $\theta$ for $\GC$. Let $\pc = \lc +\nc$ be a theta-stable parabolic subalgebra of $\gc$, so that $\pc$, $\lc$ and $\nc$ are preserved by $\theta$. Let $\gc = \kc\oplus \ssc$, be the Cartan decomposition of $\gc$ and $\textbf{s}= \text{dim}(\ssc \cap \nc)$. We start with a $(\lc, \LC\cap \KC)$- module and construct a $(\gc, \KC)$-module using Zuckerman's cohomological induction functor.
\\

Suppose $Z$ is a one dimensional $(\lc, (\LC\cap \KC))$-module with infinitesimal character $\gamma_L$. 
We can extend $Z$ to a $(\pc, \LC\cap \KC)$-module by making $\nc$ act trivially. Then Zuckerman defines the following produced module
\begin{equation}
  X = \text{pro}_{(\pc, \LC\cap\KC)}^{(\gc, \LC\cap \KC)}(Z),
\end{equation}
and a functor
\begin{equation}
(\mathcal{R}_{\pc, \LC})^0 (Z) = \Gamma_{(\gc, \LC\cap \KC)}^{(\gc, \KC)}(X).
\end{equation}
$\mathcal{R}^0$ is a left exact functor and because the category of $(\lc, (\LC\cap\KC))$-modules has enough injectives, one can define
\begin{equation}
(\mathcal{R}_{\pc, \LC})^i = \text{$i$th right derived functor of $ (\mathcal{R}_{\pc, \LC})^0 $}.
\end{equation}
In this setting,
\begin{theorem}[Zuckerman, Vogan, Theorem 6.8, \cite{\VoganOrange}]
Suppose $\LR$ is a Levi subgroup of $\GR$ attached to the $\theta$-stable parabolic subalgebra $\pc = \lc+ \nc$. Let $\LC$ be the complexification of $\LR$, and $\textbf{s}$ the dimension of $\nc\cap \kc$. Let $2\rho(\nc)$ be the sum of roots positive on $\nc$.  Consider the functors
\begin{equation}
\mathcal{R}^j =  (\mathcal{R}_{\pc, \LC})^j \quad j\in \{0, 1, 2, \dots, \textbf{s}\}
\end{equation}
from the category of $(\lc, \LC\cap\KC )$-modules, to the category of $(\gc, \KC)$-modules. Let $Z$ be a $(\lc, \LC\cap\KC)$-module and let $\hc$ be a Cartan subalgebra of $\lc$. Assume that $Z$ has $\LC$-infinitesimal character $\gamma_L\in \hc^*$ then  
\begin{enumerate}
\item $\mathcal{R}^j(Z)$ has $\GC$-infinitesimal character $\gamma_L + \rho(\nc)$. 
\item Assume that for each root $\as$ of $\hc$ in $\nc$, 
$$ \text{Re}\langle \gamma_L + \rho(n) , \as \rangle \ge 0.$$
Then $\mathcal{R}^j(Z)$ is zero for $j$ not equal to $\textbf{s}$.  
\item Under the above hypothesis, if $Z$ is unitary, then so is $\mathcal{R}^\textbf{s}(Z)$. 
\item If we assume that for each root $\as$  of $\hc$ in $\nc$,
$$ \text{Re}\langle \gamma + \rho(\nc), \as \rangle > 0.$$
Then, if $Z$ is non-zero, so is $\mathcal{R}^\textbf{s}(Z)$. 
\end{enumerate}
\end{theorem}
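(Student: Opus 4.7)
The plan is to follow the derived-functor framework developed by Zuckerman and refined by Vogan, treating the four conclusions as successively deeper assertions about $\mathcal{R}^j(Z)$. Throughout, write $\mathcal{F} = \Gamma_{(\gc, \LC\cap\KC)}^{(\gc,\KC)}$ for the $\KC$-finite part functor and recall $\mathcal{R}^j(Z) = R^j\mathcal{F}(\text{pro}(Z))$, where the derived functors are computed in the category of $(\gc, \LC\cap\KC)$-modules, which has enough injectives.

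For part (1), I would carry out the infinitesimal character calculation directly on the level of $\text{pro}(Z)$ before passing to derived functors. Because $\mathcal{F}$ only affects the $\KC$-finite structure and is $Z(\gc)$-linear, the action of $Z(\gc)$ on every $\mathcal{R}^j(Z)$ coincides with that on $\text{pro}(Z)$, so it suffices to compute the latter. The normalization in the definition of $\text{pro}$ (tensoring with the top exterior power of $\nc$, which is the source of the $\rho(\nc)$-shift) lets one apply the Harish-Chandra homomorphism relative to $\hc \subset \lc \subset \gc$: for $z \in Z(\gc)$ and $v$ in the canonical $\lc$-stable summand, $z$ acts by $\gamma_L + \rho(\nc)$ evaluated via the Harish-Chandra projection. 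Since $Z(\gc) \to Z(\lc)$ restricted to weight vectors is well-understood, this reduces to a computation in a rank-one $\mathfrak{sl}(2)$ situation along each simple root of $\nc$.

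For parts (2) and (4), I would use the standard spectral sequence and Euler--Poincar\'e machinery. Vanishing above the middle degree $\textbf{s}$ is straightforward from $\textbf{s} = \dim(\nc \cap \kc)$ together with the cohomological dimension of $\mathcal{F}$. Vanishing below $\textbf{s}$ in the good range (part 2) is the serious half; I would invoke the duality theorem identifying $\mathcal{R}^j$ with a Bernstein-type functor and then argue that the hypothesis $\mathrm{Re}\langle \gamma_L + \rho(\nc), \alpha \rangle \geq 0$ forces composition factors of $\mathcal{R}^j(Z)$ for $j < \textbf{s}$ to have composition factors that are inconsistent with the infinitesimal character $\gamma_L + \rho(\nc)$ being dominant, so they must vanish. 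Non-vanishing (part 4) follows from the bottom-layer $K$-type argument: produce an explicit $\KC$-type in $\mathcal{R}^\textbf{s}(Z)$ via the Blattner-type formula, and show strict positivity in the hypothesis guarantees the multiplicity is nonzero.

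The genuinely hard step will be part (3), preservation of unitarity. My plan is to use Vogan's signature theorem: construct an invariant Hermitian form on $\mathcal{R}^\textbf{s}(Z)$ from the $\LC\cap\KC$-invariant positive form on $Z$ by passing through the Koszul resolution of $\text{pro}$, and then show that under the weakly-good-range hypothesis the induced form is positive semi-definite. The key technical obstacle is controlling the sign of the pairing on each $\KC$-isotypic component, which amounts to showing that the natural map from the bottom layer to $\mathcal{R}^\textbf{s}(Z)$ is norm-preserving up to a positive scalar; this in turn uses the vanishing from part (2) to identify $\mathcal{R}^\textbf{s}$ with an Ext group on which the Hermitian pairing is manifestly definite. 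Once unitarity of the bottom layer is established, the full $\mathcal{R}^\textbf{s}(Z)$ inherits unitarity from its generation as a $(\gc, \KC)$-module.
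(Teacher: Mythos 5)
This statement is not proved in the paper at all: it appears verbatim as a citation to Theorem~6.8 of Vogan's book \cite{\VoganOrange}, and the paper uses it as a black box (its only role is to feed into Theorem~\ref{Aq_AV} and the associated-variety computations). There is therefore no proof in the paper to compare your sketch against, and a blind proof attempt was not really called for here.

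That said, since you have offered a sketch of the external proof, a few remarks on its accuracy. Your framework for parts (1) and (4) is the right one and matches the standard treatment (the $\rho(\nc)$-shift via the top exterior power of $\nc$ in the normalization of $\text{pro}$, and bottom-layer $K$-types for nonvanishing). For part (2), however, your claim that vanishing above degree $\textbf{s}$ is ``straightforward from cohomological dimension'' is off by a factor of two: when $\pc$ is $\theta$-stable, $\kc\cap\pc$ is a parabolic in $\kc$ with nilradical $\kc\cap\nc$, so the Zuckerman functor $\Gamma_{(\gc,\LC\cap\KC)}^{(\gc,\KC)}$ has cohomological dimension $\dim(\kc/(\kc\cap\lc)) = 2\textbf{s}$, not $\textbf{s}$. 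Vanishing in the range $\textbf{s} < j \le 2\textbf{s}$ is exactly as nontrivial as vanishing below $\textbf{s}$, and both are handled together by the duality with Bernstein functors plus the dominance hypothesis. For part (3), the concluding step of your argument is not valid: a $(\gc,\KC)$-module generated by a subspace on which an invariant form is positive is not thereby unitary. The actual proof of unitarity in the weakly good range is the deep content of Vogan's unitarizability theorem (or Wallach's transfer theorem) and cannot be reduced to positivity on the bottom layer alone; it requires a signature-tracking or deformation argument across the full $\KC$-type decomposition.
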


For most of our applications, we will take $Z$ to be a one dimensional $(\lc, \LC\cap\KC)$-module. Let $\lambda = dZ \in \hc^*$. Let $\pc:=\pc(\lambda) = \lc(\lambda)+\nc(\lambda)$, and by $\Aq_\pc(Z)$, we will really mean $\mathcal{R}^\textbf{s}(Z)$. The modules $\Aq_\pc(Z)$, often denoted as $\Aq_\pc(\lambda)$  are defined using the $\theta$-stable data $(\pc, \lambda:=dZ)$ and have infinitesimal character $\lambda+\rho(\nc(\lambda))$. 
\\
The following theorem shows that even though cohomological induction functor depends on $Z$, the associated variety of the cohomologically induced module $\Aq_\pc(Z)$ depends only on $n(\lambda)$, which depends only on $dZ$. 

\begin{theorem}\cite{\Yama}\label{Aq_AV}
Let $\GR$ be a real group corresponding to the Cartan involution $\theta$. Let $\pi = \Aq_\pc(Z)$, where the $\theta$-stable data is given by $\pc=\pc(\lambda)$ and $Z$ is a one dimensional representation of $\lc$ satisfying $dZ = \lambda\in \lc^*$. Suppose $\langle \lambda + \rho(\nc), \as\rangle >0 $ for all $\as \in \Delta (\nc)$, where $ \Delta(\nc)$ is the set of roots on $\nc$ and $2\rho(\nc)$ is the sum of roots in $\Delta(\nc)$. Let $\gc = \kc\oplus \ssc$ be the Cartan decomposition of $\gc$. 
Then, $\AVT(\pi) = \overline{\KC \cdot (\nc\cap \ssc)}$, is the closure of a single $\KC$-orbit in $\N_\theta$. 
\end{theorem}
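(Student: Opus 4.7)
The plan is to exhibit a good $(U(\gc), \KC)$-filtration on $\pi = \Aq_\pc(Z)$, compute its associated graded as a $\KC$-equivariant $S(\gc/\kc)$-module, and read off $\AVT(\pi)$ as the support of that module in $(\gc/\kc)^* \simeq \ssc^*$.

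First I would unpack the cohomological induction. Extend $Z$ to a $(\pc, \LC\cap\KC)$-module with $\nc$ acting trivially and form the produced module $X = \text{pro}_{(\pc,\LC\cap\KC)}^{(\gc,\LC\cap\KC)}(Z)$, so that $\pi = (\mathcal{R}_{\pc, \LC})^\textbf{s}(Z) = (\Gamma_{(\gc,\LC\cap\KC)}^{(\gc,\KC)})^\textbf{s}(X)$. Choose an $(\LC\cap\KC)$-stable complement $\bar\nc$ to $\pc$ in $\gc$ (so that $\bar\nc$ is also $\theta$-stable, using a $\theta$-stable Cartan in $\lc$). The PBW isomorphism $U(\gc) \cong S(\bar\nc) \otimes U(\pc)$ endows $X$ with an $(\LC\cap\KC)$-equivariant increasing filtration with
\begin{equation*}
\gr X \;\cong\; S(\bar\nc)\otimes Z
\end{equation*}
as $(S(\gc), \LC\cap\KC)$-modules, where $\pc$ acts on the right-hand side through the projection $\pc \twoheadrightarrow \lc$.

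Second I would transport this filtration through Zuckerman's functor. The positivity hypothesis $\langle \lambda + \rho(\nc), \as\rangle > 0$ for $\as \in \Delta(\nc)$ is precisely what is needed for Vogan's principle that \emph{``taking $\gr$ commutes with cohomological induction''}: the Hochschild--Serre-type spectral sequence computing the derived Zuckerman functor collapses at $E_2$, the cohomology concentrates in degree $\textbf{s}$, and the induced filtration on $\pi$ is good with
\begin{equation*}
\gr \pi \;\cong\; \text{pro}_{(\lc,\LC\cap\KC)}^{(\gc,\KC)}\!\bigl(S(\bar\nc)\otimes Z\bigr)
\end{equation*}
after a shift by $\textbf{s}$. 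Geometrically, $\gr \pi$ is the space of global algebraic sections of the $\KC$-equivariant coherent sheaf on $\KC/(\LC\cap\KC)$ whose fiber at the basepoint is $S(\bar\nc) \otimes Z$.

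Third I would compute the support. As noted just before the theorem, the $\kc$-action on $\gr \pi$ is forced to be trivial, so $S(\gc)$ acts through $S(\gc/\kc) \simeq S(\ssc)$. Using the Killing form to identify $\gc \cong \gc^*$, the support of the fiber $S(\bar\nc)$ as an $S(\gc)$-module is $\pc^\perp = \nc$; projecting to $\ssc$ and using that $\nc$ is $\theta$-stable (so its image in $\gc/\kc$ equals $\nc\cap\ssc$), the fiberwise support is exactly $\nc \cap \ssc$. Inducing over $\KC/(\LC\cap\KC)$ replaces a single fiber by the image of the moment map
\begin{equation*}
\mu: \KC \times_{\LC\cap\KC} (\nc\cap\ssc) \longrightarrow \ssc, \qquad (k, X)\mapsto \Ad(k)X,
\end{equation*}
namely $\KC\cdot(\nc\cap\ssc)$. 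Since $\KC/(\LC\cap\KC)$ is projective, $\mu$ is proper, so its image is closed and irreducible, yielding $\AVT(\pi) = \overline{\KC\cdot(\nc\cap\ssc)}$. Irreducibility of the image together with finiteness of nilpotent $\KC$-orbits in $\ssc$ forces this closure to be the closure of a single orbit (the generic orbit in the image).

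The principal difficulty is the second step: verifying that the PBW filtration on $X$ descends to a good filtration on $\pi$ and that $\gr$ commutes with $(\Gamma)^\textbf{s}$. This is the one place where the dominance condition is truly essential, and handling it cleanly requires either a careful spectral-sequence argument or Yamashita's machinery of $(\gc, \KC)$-modules with a compatible ``good'' $(U(\gc), \LC\cap\KC)$-filtration. Once this collapse is established, the remaining arguments are bookkeeping with the PBW complement, the $\kc$-annihilation of $\gr\pi$, and properness of the moment map.
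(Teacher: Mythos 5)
The paper does not actually prove this theorem; it cites it to Yamashita. So there is no ``paper's own proof'' to compare against, and your outline has to stand on its own. The overall strategy you describe --- PBW filtration on the produced module, collapse of the Zuckerman derived functor under the dominance hypothesis, then a moment-map argument for the support --- is exactly the standard Yamashita/Vogan route, and the places where you flag the hard work (compatibility of $\gr$ with $(\Gamma)^{\textbf{s}}$, goodness of the induced filtration) are indeed the genuinely delicate points that the citation is carrying.

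There is, however, a real slip in step three. You form the moment map
\begin{equation*}
\mu: \KC \times_{\LC\cap\KC} (\nc\cap\ssc) \longrightarrow \ssc
\end{equation*}
and assert that it is proper because ``$\KC/(\LC\cap\KC)$ is projective.'' It is not: $\LC\cap\KC$ is the \emph{Levi} of the parabolic $\QC\cap\KC \subset \KC$, so $\KC/(\LC\cap\KC)$ is an affine bundle over the partial flag variety $\KC/(\QC\cap\KC)$, hence not projective, and the map $\mu$ you wrote down is not proper. The fix is to observe that $\nc\cap\kc$ acts on $\nc\cap\ssc$ by bracket, so $\nc\cap\ssc$ is actually a $\QC\cap\KC$-representation, and the correct collapsed-fiber-bundle picture places $\gr\pi$ as sections of a coherent sheaf on the projective variety $\KC/(\QC\cap\KC)$ (your geometric remark at the end of step two has the same $\LC\cap\KC$ vs.\ $\QC\cap\KC$ confusion). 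The moment map one actually uses is
\begin{equation*}
\KC \times_{\QC\cap\KC} (\nc\cap\ssc) \longrightarrow \ssc,
\end{equation*}
which \emph{is} proper and has the same image $\KC\cdot(\nc\cap\ssc)$. With that correction, the closedness and irreducibility of the image go through, and the finiteness of nilpotent $\KC$-orbits in $\ssc$ gives the single dense orbit as you say. Aside from this point (and some care needed about whether $\gr$ of the produced module is $S(\bar\nc)\otimes Z$ or its graded dual, which does not change the support computation), your outline is a faithful sketch of the cited proof.
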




\subsection{Coherent Continuation and Translation Functors}
We know that  $X^*(\HC)$ is the lattice of weights of finite dimensional representations for $\GC$. So that given a finite dimensional representation $F$ of $\GC$, the set $\Delta(F)$ of weights of $F$, is a subset of $X^*(\HC)$. We beging with the definition of coherent family:

\begin{defn}[Coherent Family]
A coherent family of virtual modules is a map 
\begin{equation}
\Theta : X^*(\HC)  \longrightarrow \mathbb{K}\mathfrak{M}(\gc, \KC),
\end{equation}
such that:
\begin{enumerate}
\item $\Theta(\lambda)$ has infinitesimal character $\lambda \in X^*(\HC)$; and 
\item For every finite-dimensional representation $F$ of $\GC$, 
\begin{equation}
F\otimes\Theta(\lambda) = \sum_{\mu \in \Delta(F)} \Theta (\lambda + \mu).
\end{equation}
\end{enumerate}
\end{defn} 
Now, fix $\gamma \in X^*(\HC)$ and assume that it is integral. Given $M\in\mathfrak{M}(\gc, \KC, \gamma)$, we say $\Theta : X^*(\HC)  \longrightarrow \mathbb{K}\mathfrak{M}(\gc, \KC)$ is a coherent family through $M$ if $\Theta(\gamma) = M$. The set of all coherent families on $X^*$ is a finite rank, free $\mathbb{Z}$-module. If $\gamma$ is assumed to be regular then we have a basis for coherent families on $X^*(\HC)$ given by $\{ \Theta_M\}$, where $\Theta_M$ is a coherent family through $M$, and $M \in \Pi(\gc, \KC, \gamma)$.
\\

Suppose $w\in \W$ and $\Theta : X^*\longrightarrow \mathbb{K}\mathfrak{M}(\gc, \KC)$ is a coherent family. We can construct a new coherent family $w\cdot \Theta$ defined by 

\begin{equation}
w\cdot \Theta (\lambda) = \Theta( w^{-1}\lambda)
\end{equation}

Since the infinitesimal character is equivalent up to the action of $\W$, $w\cdot \Theta (\lambda)$ has infinitesimal character $\lambda$. Since the weights of a finite dimensional representation of $\GC$ are invariant under the action of $\W$, the second condition for the definition of coherent family is also true for $w\cdot \Theta$. To summarize the above discussion:

\begin{theorem}[Coherent Continuation Action]
Suppose $\gamma$ is a fixed regular integral infinitesimal character. Then there is an action of $\W$ on the set of all coherent families 
$$\Theta: X^* \longrightarrow \mathbb{K}\mathfrak{M}(\gc, \KC)$$ defined by 
\begin{equation}
 w\cdot \Theta (\lambda) = \Theta (w^{-1}\lambda).
\end{equation}
\end{theorem}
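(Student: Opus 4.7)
The plan is to verify directly that $w\cdot \Theta$, defined by $(w\cdot \Theta)(\lambda) := \Theta(w^{-1}\lambda)$, satisfies the two axioms of a coherent family, and that the assignment $w \mapsto (\Theta \mapsto w\cdot \Theta)$ gives a genuine $\W$-action on the $\mathbb{Z}$-module of coherent families. The paragraph preceding the statement already points to the essential ingredients; my task is to record them precisely. Note first that the definition makes sense: $\W$ preserves the weight lattice $X^*(\HC)$, so $w^{-1}\lambda \in X^*(\HC)$ whenever $\lambda \in X^*(\HC)$.

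For axiom~(1), $(w\cdot \Theta)(\lambda) = \Theta(w^{-1}\lambda)$ has infinitesimal character $w^{-1}\lambda$ by the corresponding axiom for $\Theta$. Under the Harish-Chandra isomorphism, infinitesimal characters for $\gc$ are parameterized by $\hc^*/\W$, so $w^{-1}\lambda$ and $\lambda$ determine the same infinitesimal character, as required.

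For axiom~(2), fix a finite-dimensional representation $F$ of $\GC$. Applying axiom~(2) for $\Theta$ at $w^{-1}\lambda$ gives
\begin{equation*}
F\otimes (w\cdot \Theta)(\lambda) \;=\; F\otimes \Theta(w^{-1}\lambda) \;=\; \sum_{\mu\in \Delta(F)} \Theta(w^{-1}\lambda + \mu).
\end{equation*}
The key point is that the weights of a finite-dimensional $\GC$-representation form a $\W$-stable multiset, so $\mu \mapsto w^{-1}\mu$ is a multiplicity-preserving bijection of $\Delta(F)$ onto itself. Reindexing with $\mu = w^{-1}\nu$ yields
\begin{equation*}
\sum_{\mu\in \Delta(F)} \Theta(w^{-1}\lambda + \mu) \;=\; \sum_{\nu\in \Delta(F)} \Theta\bigl(w^{-1}(\lambda+\nu)\bigr) \;=\; \sum_{\nu\in \Delta(F)} (w\cdot \Theta)(\lambda + \nu),
\end{equation*}
which is axiom~(2) for $w\cdot \Theta$.

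Finally the action property follows from a direct computation: for $w_1, w_2 \in \W$,
\begin{equation*}
\bigl(w_1 \cdot (w_2\cdot \Theta)\bigr)(\lambda) \;=\; (w_2\cdot \Theta)(w_1^{-1}\lambda) \;=\; \Theta(w_2^{-1}w_1^{-1}\lambda) \;=\; \Theta\bigl((w_1w_2)^{-1}\lambda\bigr) \;=\; \bigl((w_1w_2)\cdot \Theta\bigr)(\lambda),
\end{equation*}
and the identity element of $\W$ clearly acts trivially. There is no real obstacle here: the argument is a formal verification, and the only non-trivial input is the $\W$-invariance of $\Delta(F)$, which is standard (representatives of $\W$ in $\Norm_\GC(\HC)$ permute the weight spaces of $F$). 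Without this invariance, the reindexing step in axiom~(2) would break, so it is the single substantive ingredient.
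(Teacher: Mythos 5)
Your proposal is correct and takes essentially the same route as the paper: the paper's proof consists of the two observations you elaborate, namely that infinitesimal characters are determined up to the $\W$-action (so axiom (1) holds), and that $\Delta(F)$ is $\W$-invariant (so axiom (2) holds after reindexing). You additionally verify the group-action axiom, which the paper leaves implicit but is a harmless and correct addition.
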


We can use the coherent continuation action to define a $\W$ action on $ \mathbb{K}\mathfrak{M}(\gc, \KC, \gamma)$. Since $\Pi(\gc, \KC, \gamma)$ is a basis for $ \mathbb{K}\mathfrak{M}(\gc, \KC, \gamma)$, we only need to define the action of $\W$ on this basis and then linearly extend this action. 
\\
Suppose $J \in \Pi(\gc, \KC, \gamma)$. Choose a coherent family $\Theta:  X^* \longrightarrow \mathbb{K}\mathfrak{M}(\gc, \KC)$ such that $\Theta(\gamma) = J$. Then, 

\begin{equation}
w\cdot J = (w\cdot\Theta)(\gamma).
\end{equation}

We can use the action of $\W$ on  $\mathbb{K}\mathfrak{M}(\gc, \KC, \gamma)$ to define a partial order on representations in $\Pi(\gc, \KC, \gamma)$ as follows:

\begin{defn}
Suppose $X, Y \in \mathfrak{M}(\gc, \KC, \gamma)$. 
\begin{enumerate}
 \item We say $X<_\gamma Y$ if $Y$ appears in $w\cdot X$ for some $w \in \W_\gamma$
 \item We say $X\sim_\gamma Y$ if $X <_\gamma Y$ and $Y<_\gamma X$. 
 \end{enumerate}
\end{defn}
The relation $\sim_\gamma$ is an equivalence relation on $\Pi(\gc, \KC, \gamma)$, and, the equivalence classes are called Harish-Chandra cells.
\\

The following result provides a relation between coherent continuation action and the operation of computing associated varieties.
\begin{prop}
Let $J\in \Pi(\gc, \KC, \gamma)$. Suppose $\Theta: X^* \longrightarrow \mathbb{K}\mathfrak{M}(\gc, \KC)$ such that $\Theta(\gamma) = J$. Let $w\in \W$ be an arbitrary element in $\W_\gamma$, then
\begin{enumerate}
\item $\AVC(J) = \AVC( w \cdot J)$.
\item $\AVT(J) = \AVT( w \cdot J)$.
\item $\AVR(J) = \AVR( w \cdot J)$.
\end{enumerate}
\end{prop}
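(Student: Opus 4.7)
The plan is to reduce the statement to the action of simple reflections in $\W_\gamma$, to realize each such reflection via a translation (wall-crossing) functor, and then to verify directly that these functors preserve each of $\AVC$, $\AVT$, and $\AVR$. Because $\W_\gamma$ is generated by the reflections $s_\as$ attached to simple roots $\as$ integral with respect to $\gamma$, and because the claim ``$\AVC(\cdot)=\AVC(\cdot)$'' holds universally over $\Pi(\gc,\KC,\gamma)$, an induction on the length of $w$ reduces matters to proving $\AVC(J)=\AVC(s_\as\cdot J)$, and likewise for $\AVT$ and $\AVR$, for an arbitrary simple integral reflection $s_\as$ and an arbitrary $J$.

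For such a simple reflection, the action $s_\as\cdot J$ is realized in $\mathbb{K}\mathfrak{M}(\gc,\KC,\gamma)$ by the $\as$-wall-crossing functor. Using the coherent family identity $F\otimes\Theta(\lambda)=\sum_{\mu\in\Delta(F)}\Theta(\lambda+\mu)$ with $F$ a carefully chosen finite dimensional $\GC$-representation (extremal weight close to $-\as$, say), one writes $s_\as\cdot J$ as an explicit signed combination of irreducible constituents of $F\otimes J$ lying in the $\gamma$-infinitesimal character block. In particular, every irreducible summand of $s_\as\cdot J$ is a subquotient of $F\otimes J$ for some finite dimensional $F$, and the reverse containment holds by applying $s_\as$ a second time, using $s_\as^2=1$.

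For $\AVC$, I would argue as follows: if $N$ is an irreducible subquotient of $F\otimes J$, a standard good-filtration argument on $U(\gc)$ (using the compatibility of $F\otimes -$ with the PBW filtration) shows $\AVC(N)\subseteq \AVC(J)$, and projecting to the $\gamma$-block can only refine constituents, not enlarge the variety; the reverse inclusion then follows from the involution $s_\as^2=1$. For $\AVT$, choose a $\KC$-invariant good filtration $\{V_n\}$ of $J$; then $\{F\otimes V_n\}$ is a $\KC$-invariant good filtration of $F\otimes J$ with $\gr(F\otimes J)\cong F\otimes \gr J$ as $(S(\gc),\KC)$-modules (with $S(\gc)$ acting on the second factor). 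Since $F$ is a finite dimensional free vector space, the support in $(\gc/\kc)^*$ of $F\otimes\gr J$ coincides with that of $\gr J$, so $\AVT(F\otimes J)=\AVT(J)$. Projection to the $\gamma$-block is a direct summand decomposition and cannot enlarge support, and the $s_\as^2=1$ argument again closes the loop to give $\AVT(s_\as\cdot J)=\AVT(J)$. Invariance of $\AVR$ is then immediate from the Kostant-Sekiguchi correspondence applied orbit by orbit to the result for $\AVT$.

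The principal difficulty I anticipate is making the $\AVC$ step precise: because $s_\as\cdot J$ is a priori a virtual module with possibly several constituents and signs, one must show that the union of their primitive-ideal varieties equals $\AVC(J)$ rather than a proper subvariety. The key leverage is the Borho-Brylinski-Joseph theorem recalled in Section 5, which asserts that each $\AVC$ is the closure of a single nilpotent coadjoint orbit; this rigidity, combined with the two-sided inclusion coming from $s_\as^2=1$, forces equality on the nose. The technical heart of the argument is therefore the filtration compatibility of $F\otimes -$ with the standard filtrations on $U(\gc)$ and on any good filtration of $J$, together with the exactness of translation functors.
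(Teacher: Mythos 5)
Your approach rests on the same two pillars as the paper's one-paragraph proof --- the coherent continuation action is built from tensoring with finite-dimensional representations, and such tensoring is ``associated-variety neutral'' --- but you flesh out the argument considerably: the reduction to simple integral reflections, the realization of $s_\as\cdot J$ via wall-crossing, and especially the good-filtration computation showing $\gr(F\otimes J)\cong F\otimes\gr J$ as $(S(\gc),\KC)$-modules with $S(\gc)$ acting on the second factor, whence $\AVT(F\otimes J)=\AVT(J)$. All of that is correct and is more than the paper records.

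The gap is exactly where you anticipate it: the reverse inclusion via $s_\as^2=1$ does not go through as stated. Write $s_\as\cdot J=\sum_{J'}n_{J'}J'$ in $\mathbb{K}\mathfrak{M}(\gc,\KC,\gamma)$ and apply $s_\as$ again; the identity $J=\sum_{J'}n_{J'}(s_\as\cdot J')$ lives in the Grothendieck group, where cancellation happens. For instance, if $\as\notin\tau(J)$ then $s_\as\cdot J=J+\sum m_{J'}J'$ with each $J'$ satisfying $\as\in\tau(J')$, so $s_\as\cdot J'=-J'$; the extra constituents cancel pairwise and $J$ does \emph{not} appear in $s_\as\cdot J'$. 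Moreover those $J'$ lie in strictly lower cells and can have strictly smaller $\AVC$, so the statement is false if one reads $\AVC(w\cdot J)$ as the union over nonzero constituents, and the Borho--Brylinski--Joseph irreducibility you invoke cannot rescue it: an irreducible variety can certainly sit \emph{properly} inside another. The claim that $J$ is a composition factor of $F'\otimes J'$ for every constituent $J'$ of $s_\as\cdot J$ also requires an argument (it does not follow from adjunction of translation functors alone). What is actually needed, and all that the downstream Proposition~\ref{AV_of_Cell} uses, is the one-sided inclusion $\AVC(J')\subseteq\AVC(J)$ for every constituent $J'$ of $w\cdot J$, together with the observation that $J\sim_\gamma J'$ means by definition that each appears in the coherent continuation orbit of the other, which yields both inclusions at once. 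Your good-filtration argument, applied once in each direction, proves precisely this; I would reformulate the reverse step that way (using $\sim_\gamma$ rather than $s_\as^2=1$), and note that the same sentence then handles $\AVT$ and, via Kostant--Sekiguchi, $\AVR$.
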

\begin{proof}
This results comes down to checking that the graded algebras involved in the computations of the associated varieties for $J$ and $w\cdot J$ are all isomorphic, since $J$ and $w\cdot J$ differ only up to tensoring with finite dimensional representations of $\GC$. The conclusion about associated varieties then follows. 
\end{proof}

Recall that $\Pi(\gc, \KC, \gamma)$ is the set of irreducble representations with infinitesimal character $\gamma$. Zuckerman's ideas of tensoring representations with finite-dimensional representations lead to the the theory of translation functors, which is a way of studying the representation theory at an infinitesimal character $\delta$ (possibly different from $\gamma$) in terms of the representation theory at $\gamma$. These ideas will be used extensively in computing unipotent representations. 
\\

Let $\gamma \in \hc^*$ be a fixed infinitesimal character. Fix a weight $\phi \in X^*(\HC)$ and let $F_\phi$ be the finite dimensional representation of $\GC$ with highest weight $\phi$. Let $\pi \in \Pi(\gc, \KC, \gamma)$ be an irreducible $(\gc, \KC)$-module. For, $\gamma \in X^*$ , let $\xi_\gamma: \mathfrak{z}(U(\gc)) \longrightarrow \mathbb{C}$ be the character on $\mathfrak{z}(U(\gc))$ given by Harish Chandra's isomorphism. 
Define the projection map:
\begin{equation}
P_\gamma: \mathbb{K}\mathfrak{M}(\gc, \KC) \longrightarrow \mathbb{K}\mathfrak{M}(\gc, \KC, \gamma),
\end{equation}
where the map takes $\pi \in \mathfrak{M}(\gc, \KC)$  to the largest submodule of $\pi$ annhilated by $(I - \xi_\gamma)|_{\mathfrak{z}(U(\gc))}$. In other words, $P_\gamma$ takes $\pi$ to the largest submodule with infinitesimal character $\gamma$. 

\begin{defn}[Translation (to the Wall) Functor]
Suppose $F_\phi$ is a finite dimensional representation of $\GC$ with highest weight $\phi$. Let $\gamma\in \hc^*$ be regular and integral, and, let $\pi \in \mathbb{K}\mathfrak{M}(\gc, \KC, \gamma)$.  Assume that $\gamma + \phi$ is dominant (possibly singular). The translation functor is the functor
\begin{equation}
T_\gamma^{\gamma + \phi} : \mathfrak{M}(\gc, \KC, \gamma) \longrightarrow \mathfrak{M}(\gc, \KC, \gamma+\phi), \quad \pi \mapsto P_{\gamma+\phi}(\pi\otimes F_\phi).
\end{equation}
\end{defn}

Alternately, we can define translation functors using coherent families as follows: suppose $J\in \Pi(\gc, \KC, \gamma)$, choose a coherent family $\Theta$ such that $\Theta(\gamma) = J$, then
\begin{equation}
T_\gamma^{\gamma + \phi} (J) = \Theta(\gamma+\phi).
\end{equation} 
Since $J\in \Pi(\gc, \KC, \gamma)$ is a basis for  $\mathfrak{M}(\gc, \KC, \gamma)$, we can then linearly extend this definition. Using the relationship of coherent families and associated varieties, we have 
\begin{prop}
Let $\gamma \in \hc^*$ be a regular integral infinitesimal character and let $\pi \in \mathfrak{M}(\gc, \KC, \gamma)$. Let $\phi\in X^*$ be an extremal weight of $F_\phi$, a finite dimensional representation of $\GC$. Then, 
\begin{enumerate}
\item $\AVC(\pi) = \AVC(T_\gamma^{\gamma + \phi} (\pi))$.
\item $\AVT(\pi) = \AVT( T_\gamma^{\gamma + \phi} (\pi))$.
\item $\AVR(\pi) = \AVR( T_\gamma^{\gamma + \phi} (\pi))$.
\end{enumerate}
\end{prop}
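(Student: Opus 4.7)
The plan is to reduce to the alternative characterization already noted in the paper, namely $T_\gamma^{\gamma+\phi}(\pi) = \Theta(\gamma+\phi)$ for the coherent family $\Theta$ through $\pi$, and to combine it with a direct analysis of what tensoring by a finite-dimensional module does to a good filtration. With this in hand the statement becomes: the associated varieties depend only on the coherent family, not on the point at which it is evaluated.

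First I would use the tensor-product definition $T_\gamma^{\gamma+\phi}(\pi)=P_{\gamma+\phi}(\pi\otimes F_\phi)$. Given a good $\KC$-invariant filtration $\{V_n\}$ on $\pi$, the filtration $(\pi\otimes F_\phi)_n := V_n\otimes F_\phi$ is a good $\KC$-invariant filtration on $\pi\otimes F_\phi$. Because $F_\phi$ is finite-dimensional and $\gc$ acts trivially on it at the level of the symbol, the associated graded is $(\gr V)\otimes F_\phi$ as a graded $(S(\gc),\KC)$-module, and its support on $(\gc/\kc)^*$ coincides with $\text{Supp}(\gr V)=\AVT(\pi)$. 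The analogous argument applied to the annihilator ideal in place of $V$ gives $\AVC(\pi\otimes F_\phi)=\AVC(\pi)$. Since the center of $U(\gc)$ acts semisimply on $\pi\otimes F_\phi$, this module decomposes as a direct sum of generalized infinitesimal character components, so $T_\gamma^{\gamma+\phi}(\pi)$ is a direct summand of $\pi\otimes F_\phi$, yielding the inclusions
$$\AVC\!\bigl(T_\gamma^{\gamma+\phi}(\pi)\bigr)\subset\AVC(\pi),\qquad \AVT\!\bigl(T_\gamma^{\gamma+\phi}(\pi)\bigr)\subset\AVT(\pi).$$

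For the reverse inclusion I would invoke the coherent family description. Because $\phi$ is extremal, the Grothendieck-group identity $\pi\otimes F_\phi=\sum_{\mu\in\Delta(F_\phi)}\Theta(\gamma+\mu)$ isolates the summand $T_\gamma^{\gamma+\phi}(\pi)=\Theta(\gamma+\phi)$. When $\gamma+\phi$ is regular, there is an element $w\in\W$ carrying $\gamma+\phi$ into the $\W$-orbit of $\gamma$, so by the preceding proposition on coherent continuation the associated varieties of $\Theta(\gamma+\phi)$ and $\Theta(\gamma)=\pi$ agree. When $\gamma+\phi$ lies on a wall, the wall-crossing partner $T_{\gamma+\phi}^\gamma$ composes with $T_\gamma^{\gamma+\phi}$ to produce a sum of coherent continuations of $\pi$, forcing the associated variety of $T_\gamma^{\gamma+\phi}(\pi)$ to be exactly $\AVC(\pi)$ (respectively $\AVT(\pi)$). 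The $\AVR$ equality then follows from the $\AVT$ equality via the Kostant--Sekiguchi correspondence.

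The hard part will be this last step: showing that the direct-summand projection $P_{\gamma+\phi}$ does not accidentally discard the top-dimensional orbit of $\AVC(\pi)$ when $\gamma+\phi$ is singular. Extremality of $\phi$ is what ultimately saves us, as it guarantees that $T_\gamma^{\gamma+\phi}$ admits a partner functor whose composition with it recovers $\pi$ up to coherent continuation, after which the previous proposition prevents any strict decrease in associated variety.
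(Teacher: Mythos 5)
Your proposal is considerably more detailed than the paper's own argument, which is a one-liner: ``since translation functors are nothing but evaluation of coherent families the result follows from the fact that associated varieties are constant for a fixed coherent family.'' Strictly speaking the paper never proves that constancy --- the preceding proposition establishes only $\W_\gamma$-invariance, i.e.\ invariance under $w\cdot\Theta$ for $w\in\W_\gamma$, which keeps the evaluation point inside the $\W$-orbit of $\gamma$ and does not directly cover evaluation at $\gamma+\phi$. Your good-filtration computation showing $\gr(\pi\otimes F_\phi)\cong(\gr\pi)\otimes F_\phi$ with $S(\gc)$ acting through the first factor, followed by the direct-summand inclusion and the partner functor $T_{\gamma+\phi}^\gamma$ to force the reverse inclusion, is a more honest argument for the same fact, and it is the standard one that the paper is implicitly appealing to. In that sense you are filling a real gap in the paper's exposition.

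One sentence in your proof, however, is simply wrong as written: ``When $\gamma+\phi$ is regular, there is an element $w\in\W$ carrying $\gamma+\phi$ into the $\W$-orbit of $\gamma$.'' Being in the same $\W$-orbit is being at the same infinitesimal character; $\gamma$ and $\gamma+\phi$ are at different infinitesimal characters (otherwise the translation is trivial), so no such $w$ exists. Fortunately this sentence is doing no work for you --- the partner-functor argument in the next sentence already handles both the regular and the singular case, since in the regular case $T_{\gamma+\phi}^\gamma\circ T_\gamma^{\gamma+\phi}$ is the identity (up to multiplicity) and in the singular case its composite is a sum of coherent continuations of $\pi$, and either way $\pi$ is a subquotient, giving $\AVC(\pi)\subset\AVC(T_\gamma^{\gamma+\phi}(\pi))$. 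Delete the offending sentence and let the wall-crossing argument carry both cases. Finally, both you and the paper should flag that the statement tacitly assumes $T_\gamma^{\gamma+\phi}(\pi)\neq 0$: if $\pi$ is killed by the wall-crossing (as it can be when $\gamma+\phi$ is singular and $\phi$ is not chosen compatibly with the $\tau$-invariant of $\pi$), the conclusion is vacuously false since the translated module has empty associated variety.
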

\begin{proof}
Since translation functors are nothing but evaluation of coherent families the result follows from the fact that asociated varieties are constant for a fixed coherent family. 
\end{proof}

We package this information about associated varieties being constant on coherent families into the following result,
\begin{prop} \label{AV_of_Cell}
Suppose $$ \mathfrak{M}(\gc, K, \gamma) = \coprod _{\text{HC-Cells}} \Cc, $$
then  $\AVC(\pi), \AVT(\pi)$, and $\AVR(\pi)$ remain constant as one varies $\pi$ over a fixed cell $\Cc$.
\end{prop}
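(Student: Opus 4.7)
The plan is to combine the invariance of all three associated varieties under coherent continuation (established in the proposition immediately preceding) with the defining relation of Harish-Chandra cells. Concretely, if $X,Y\in\Pi(\gc,\KC,\gamma)$ lie in the same cell $\Cc$, then by symmetry of $\sim_\gamma$ we have both $X<_\gamma Y$ and $Y<_\gamma X$; I will extract a two-sided containment of associated varieties from these relations and then upgrade containment to equality.

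First I would record the following virtual-module observation: for any $M=\sum n_iX_i\in\mathbb{K}\mathfrak{M}(\gc,\KC,\gamma)$ with irreducible constituents $X_i$, each of $\AVC(M)$, $\AVT(M)$, $\AVR(M)$ decomposes as the union of the corresponding associated varieties of those $X_i$ with $n_i\neq 0$. This follows because each associated variety is defined as the support of an associated graded module, and support is additive across short exact sequences after passing to compatible good filtrations. Now $X<_\gamma Y$ means $Y$ appears as a constituent of $w\cdot X$ for some $w\in\W_\gamma$, so combining this additivity with the preceding proposition $\AVC(w\cdot X)=\AVC(X)$ yields $\AVC(Y)\subseteq\AVC(X)$, and likewise $\AVT(Y)\subseteq\AVT(X)$ and $\AVR(Y)\subseteq\AVR(X)$. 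The reverse relation $Y<_\gamma X$ gives the opposite containments.

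For $\AVC$, the Borho-Brylinski-Joseph theorem gives $\AVC(X)=\overline{\Oo_X}$ and $\AVC(Y)=\overline{\Oo_Y}$ for single complex nilpotent orbits. The two-sided containment then forces $\Oo_X\subseteq\overline{\Oo_Y}$ and $\Oo_Y\subseteq\overline{\Oo_X}$, which is only possible if $\overline{\Oo_X}=\overline{\Oo_Y}$, so $\AVC(X)=\AVC(Y)$. For $\AVT$, the two-sided containment already delivers equality directly. The equality $\AVR(X)=\AVR(Y)$ then follows either from the same direct argument applied to $\Ad(\GR)$-orbits in $\N_{\mathbb{R}}^*$, or by transporting the $\AVT$ equality through the Kostant-Sekiguchi correspondence.

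The main obstacle is the virtual-module step: making precise that $\AVC$, $\AVT$, and $\AVR$ extend meaningfully to $\mathbb{K}\mathfrak{M}(\gc,\KC,\gamma)$ and that they behave additively on short exact sequences of $(\gc,\KC)$-modules. This hinges on choosing good filtrations that descend to subrepresentations and subquotients, and on the exactness of $\gr$. Once this additivity is in hand, the proposition reduces to the purely formal symmetry built into the definition of a cell.
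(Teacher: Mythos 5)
Your proof is correct and takes essentially the same route the paper intends: reduce everything to the invariance of associated varieties under coherent continuation (the preceding proposition) and the symmetry built into the definition of a Harish--Chandra cell. The paper in fact gives no written proof of this proposition at all (it is ``packaged'' from the preceding results), so your spelling out of the two-sided containment is a genuine improvement in rigor.

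The one place you should clean up is the step you yourself flag as the ``main obstacle.'' Defining $\AVC$, $\AVT$, $\AVR$ of an arbitrary element $M=\sum n_iX_i$ of $\mathbb{K}\mathfrak{M}(\gc,\KC,\gamma)$ by the union over $n_i\neq 0$ is not something that needs to be verified \emph{as a property} of a preexisting invariant; it would have to be taken as the \emph{definition} of the invariant on the Grothendieck group, and then the preceding proposition $\AVC(w\cdot J)=\AVC(J)$ must be re-proved against that definition. That is more work than the statement deserves, and the signed coefficients in $w\cdot J$ make ``support is additive across short exact sequences'' not immediately applicable, since the Grothendieck group only records alternating sums, not filtrations. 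The cleaner route is to avoid virtual modules altogether: the relation $X<_\gamma Y$, unwound through the definition of coherent continuation, says precisely that $Y$ occurs as a composition factor of $F\otimes X$ for some finite-dimensional representation $F$ of $\GC$. For any such $Y$ one chooses a good filtration on $F\otimes X$ of the form $F\otimes X_n$; then $\gr(F\otimes X)\cong F\otimes\gr X$ as an $S(\gc)$-module (the $F$-factor contributes only in lower degree), so $\AVT(F\otimes X)=\AVT(X)$, and since good filtrations restrict to subobjects and quotients with $\mathrm{Supp}(\gr A)\cup\mathrm{Supp}(\gr C)=\mathrm{Supp}(\gr B)$ for $0\to A\to B\to C\to 0$, every composition factor $Y$ of $F\otimes X$ satisfies $\AVT(Y)\subseteq\AVT(X)$. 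The same argument applies verbatim to $\gr I(\pi)$ for $\AVC$, and $\AVR$ comes along for free via Kostant--Sekiguchi. From there your two-sided containment, including the use of Borho--Brylinski--Joseph to upgrade the $\AVC$ containments to equality of orbit closures, is exactly right.
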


Proposition \ref{AV_of_Cell} allows us to define the notion of associated variety of a cell, that is, if $\Cc$ is a $\W$-cell, we can define $\AVC(\Cc)$, $\AVT(\Cc)$, $\AVR(\Cc)$ to be the respective associated varieties of a fixed $\pi \in \Cc$.
\\

Suppose $\Cc$ is a $HC$-cell. Taking the irreducible representations in  $\Cc$ as a basis, we can linearly extend the coherent continuation action to a $\# \Cc :=c $-dimensional complex representation of $\W$. Understanding this Weyl group representation on the cell $\Cc$ will be the main goal of the following section. 
\\

The coherent continuation action on $\Cc$ contains a unique special Weyl group representation. We can then use the Springer-correspondence to attach a complex nilpotent orbit of $\gc$ to $\Cc$. This complex nilpotent orbit turns out to be the complex associated variety of representations in this cell. There are atleast two approaches to computing the special $HC$ -representation of the cell $\Cc$ - one due to Noel and Jackson, and the other due to Binegar.  In the case when the group $\GR$ is of classical type, the algorithm due to Noel and Jackson is very amenable to implementation in Atlas. 

\subsection{The Noel-Jackson Algorithm} 
Let $\GR$ be the real form a complex classical connect reductive algebraic group $\GC$. The special $\W$-representation attached to a $\W$-cell $\Cc$ can be studied using the sign representation. More precisely, suppose $\pi$ is a representation of $\W$ and let $\textbf{L}(\pi)$ be the set of all parabolic subroups $\Pp$ of $\W$, such that $\text{Res}_\Pp^\W(\pi)$ contains the sign representation of $\Pp$, $\textbf{L}(\pi)$ is called the Levi-set of $\pi$.

\begin{theorem}[Noel-Jackson]
Suppose $\W$ is a Weyl of classical type. Let $\pi$ be an irreducible representation of $\W$. Then, $\pi$ is determined by its Levi set  $\textbf{L}(\pi)$. 
\\
Alternately, starting with a Levi set $\textbf{L}$, it is possible to construct a $\pi \in \widehat{\W}$ such that $\textbf{L}(\pi) = \textbf{L}.$
\end{theorem}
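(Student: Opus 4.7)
The plan is to prove both assertions type-by-type (for $A_{n-1}$, $B_l$, $C_l$, and $D_l$), leveraging Frobenius reciprocity to reinterpret the Levi set as the list of parabolics whose induced sign representation contains $\pi$, and then using the classical combinatorial parameterization of $\widehat{\W}$.

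First, by Frobenius reciprocity,
\begin{equation*}
\dim \text{Hom}_{\Pp}\!\left(\text{Res}_\Pp^\W \pi,\, \text{sgn}_\Pp\right) \;=\; \dim \text{Hom}_\W\!\left(\pi,\, \text{Ind}_\Pp^\W \text{sgn}_\Pp\right),
\end{equation*}
so $\Pp \in \textbf{L}(\pi)$ if and only if $\pi$ is a constituent of $\text{Ind}_\Pp^\W \text{sgn}_\Pp$. The Levi set therefore records the ``support" of $\pi$ among all induced sign representations, and the two halves of the theorem amount to: (i) this support determines $\pi$; (ii) it is realized by some $\pi$.

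For type $A_{n-1}$ the irreducibles are Specht modules $S^\lambda$ for $\lambda \vdash n$, and the parabolic subgroups are Young subgroups $S_\mu$. The classical formula $\text{Ind}_{S_\mu}^{S_n}(\text{sgn}) = \sum_\lambda K_{\lambda',\mu}\, S^\lambda$, with Kostka numbers $K_{\lambda',\mu}$, gives $S_\mu \in \textbf{L}(S^\lambda)$ iff $\mu$ is dominated by $\lambda'$. Hence the dominance-maximal element of $\textbf{L}(S^\lambda)$ is $\lambda'$ itself, which determines $\lambda$, and conversely any given set of compositions closed below under dominance with a unique maximum $\nu$ is the Levi set of $S^{\nu'}$. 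In types $B_l$ and $C_l$ the irreducibles are parameterized by bipartitions $(\as, \bs)$ with $|\as| + |\bs| = l$, and parabolic subgroups are products $W(B_k)\times S_{\mu_1}\times \cdots \times S_{\mu_r}$. One combines the type-$A$ formula above with the branching rules for the hyperoctahedral group (restriction to $(\mathbb{Z}/2)^l \rtimes S_l$ and extraction of the $\text{sgn}$-isotypic component on each $\mathbb{Z}/2$-factor) to obtain an analogous decomposition: $\pi_{(\as,\bs)}$ appears in $\text{Ind}_\Pp^\W(\text{sgn})$ exactly when the components of $\Pp$ are dominated by a pair built from $(\as', \bs')$. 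Thus $\textbf{L}(\pi_{(\as,\bs)})$ again has a unique dominance-maximal parabolic, from which $(\as, \bs)$ is recovered; conversely, any admissible such set determines a bipartition and hence an irreducible.

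For type $D_l$ one descends from type $B_l$ via restriction: most $W(B_l)$-irreducibles stay irreducible on $W(D_l)$, and the pairs $(\as, \as)$ that split into two $W(D_l)$-irreducibles are separated by examining their restrictions to standard parabolics of type $D_{l-1}$ or $A_{l-1}$. Finally, the explicit construction map $\textbf{L} \mapsto \pi$ is just the inverse of the recipe above: take the dominance-maximum of $\textbf{L}$, read off the partition or bipartition from its type, and output the corresponding irreducible.

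The main obstacle is in types $B_l$, $C_l$, and especially $D_l$: unlike the type-$A$ case, the decomposition of $\text{Ind}_\Pp^\W(\text{sgn})$ is not a single Kostka formula but requires assembling the Littlewood–Richardson rule for the $S_{\mu_i}$-pieces with the sign characters of the $(\mathbb{Z}/2)$-factors, and matching the resulting dominance-maximal parabolic to the Lusztig symbol parameterization $(\lambda,\mu)$ introduced earlier. One must verify that the lexicographic maximum in the Levi set corresponds exactly to the rows of the symbol (which is what makes the procedure amenable to Atlas implementation). For $D_l$ the additional bookkeeping needed to distinguish the split pair $(\as,\as)$ is the most delicate point, and is best handled by enlarging the notion of ``parabolic'' slightly to include both classes of type-$A_{l-1}$ parabolics in $W(D_l)$.
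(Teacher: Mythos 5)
The paper does not actually prove this theorem; it cites Noel and Jackson and explicitly notes that their paper ``is in the publication pipeline,'' so there is no in-paper proof to compare against and your outline has to be judged on its own.

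The Frobenius-reciprocity reformulation and the type $A$ step via Kostka numbers are fine, but the central mechanism you propose for reconstructing $\pi$ from $\textbf{L}(\pi)$ --- read off a unique dominance-maximal element and invert --- fails as soon as one leaves type $A$. Already in $W(B_2)$ the two-dimensional irreducible $\pi_{(1,1)}$ restricts to $\mathrm{triv}\oplus\mathrm{sgn}$ on each of the two rank-one standard parabolics $\langle s_1\rangle$ and $\langle s_2\rangle$; these are non-conjugate (one is a type-$A_1$ Young factor, the other a type-$B_1$ factor), hence incomparable, so $\textbf{L}(\pi_{(1,1)})=\{\{e\},\langle s_1\rangle,\langle s_2\rangle\}$ has two maximal elements, not one. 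This is systematic in types $B$, $C$: a parabolic is a product $W(B_k)\times S_{\mu_1}\times\cdots$, and the $B$-part and the $A$-parts are not ordered against each other, so the Levi set is not a principal lower set. You therefore cannot conclude uniqueness by exhibiting a single dominance-maximum; you would need to show that the entire set (or, equivalently, its full antichain of maximal elements, suitably interpreted in terms of the Lusztig symbol) separates bipartitions, and that argument is precisely what is missing. Your type $D$ paragraph has a second, independent gap: the split pair $(\as,\as)$ is already handled by the fact that the two type-$A_{l-1}$ standard parabolics obtained by removing either fork node are distinct standard parabolics and are swapped by the diagram automorphism together with the two halves of the irreducible; ``enlarging the notion of parabolic'' is not what is needed, but one must actually verify that these two parabolics land in different Levi sets, which you do not do.
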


We need the following definition:
\begin{defn}[Tau-invariant]
Suppose $J \in \Pi(\gc, K, \gamma)$. Fix a set of positive simple roots $\Delta^+(\gc, \mathfrak{h})$. We say that a simple root $\alpha$ is in the tau invariant of $J$ if and only if $s_\alpha \cdot J = - J$ ( in the Grothendieck group $\mathbb{K}\mathfrak{M}_\gamma(\gc, K) $).  We denote this set by $\tau(J)$. 
\end{defn}

Fix a $\W$-cell $\Cc$, let $J\in \Cc$ and let $\pi$ be the special $\W$-representation attached to $\Cc$. We can construct a parabolic subgroup $\Pp_J$ of $W$ using the $s_\alpha$ for $\as\in \tau(J)$ as generators. Furthermore, by definition of the tau-invariant, we see that $\text{Res}^\W_{\Pp_J}(\pi)$ contains the sign representation of $\Pp_J$. Therefore, using tau-invariants of representations in $\Cc$, we can extract a Levi-set $\textbf{L}(\Cc)$ for the cell $\Cc$. This is the Levi-set for the coherent continuation representation on $\Cc$. We can now use the Noel-Jackson algorithm to compute the special cell representation on $\Cc$. The paper with the details of this algorithm is in the publication pipeline. With what was available to the authors, we have managed to implement it in to the Atlas software sucessfully. 

\section{Parameterizing Theta Forms of Even Complex Nilpotent Orbits}
Let $\GC$ be a complex connected reductive algebraic group. Let $\EG$ be a $L$-group for $\GC$. We will be in the Atlas Setting (refer \ref{AtlasSetting}) for the rest of this paper.  
We outline an algorithm to parameterize real forms of a even complex nilpotent orbit. 

\subsection{Unipotent Arthur Parameters} 
Fix a unipotent Arthur parameter, say $\psi$. Using the restriction of $\psi$ to $SL(2, \mathbb{C})$ we get a nilpotent orbit $\Oo^\vee$ of $\GC^\vee$ on $\gc^\vee$. Furthermore, the restriction of $\psi$ to $W_\mathbb{R}$ is determined once we specify $\psi(j)$, which must be an element of order two in $\EG$ satisfying:
\begin{enumerate}
\item $\psi(j) \in \text{Cent}_{\GC^\vee}(\psi|_{SL(2,\mathbb{C})})$,
\item $\psi(j) \in \EG - \GC^\vee$. 
\end{enumerate}

Corresponding to $\psi$, let
\begin{equation}
\psi_1 := \psi|_{\SL}: \SL \longrightarrow \GC^\vee, 
\end{equation}
and let 
\begin{equation}
\lambda = \lambda_1 = d\psi_1 \begin{pmatrix} \frac{1}{2} & 0 \\ 0 & -\frac{1}{2}\end{pmatrix} \in \hc^\vee, \quad E_\psi = \psi_1 \begin{pmatrix}
0 & 1 \\
0&0
\end{pmatrix}.
\end{equation}
If we assume that $\lambda$ is integral, which is equivalent to requiring $\Oo^\vee$ to be even, we can construct the parabolic subalgebra $\pc(\lambda)^\vee = \lc(\lambda)^\vee+\nc(\lambda)^\vee\subset \gc^\vee$. Let $\Pp^\vee$ be the $\GC^\vee$-conjugacy class of parabolic subalgebras conjugate to $\pc(\lambda)^\vee$. 
\\
Let $y \in \Ii(\GC^\vee, \gamma^\vee)$ be a representative for a strong real form of $\GC^\vee$ and let $\theta^\vee = \text{Int}(y)$. Let $\gc^\vee = \kc^\vee\oplus \ssc^\vee$ be the Cartan decomposition of $\gc^\vee$ with respect to $\theta^\vee$. In this setting, $E_\psi\in \nc(\lambda)^\vee\cap \ssc^\vee$, and using (\cite{\ABV}, Lemma 27.8), it belongs to the Richardson class (\cite{\ABV}, Proposition 20.4) corresponding to $\Pp^\vee$, denoted as $\Zz(\Pp^\vee)$. 
\\

Let $\Oo^\vee_\lambda$ be the semisimple orbit containing $\lambda$, arising from a homomorphism $\psi_1 : \SL \longrightarrow \GC^\vee$, and let $X(\Oo_\lambda^\vee, \EG):= \{(y', \lambda')\;|\; y'\sim y \;\; \text{and}\;\; \lambda'\in \Oo_\lambda^\vee\}$. We say that a unipotent parameter $\psi'$ is \textit{supported} on $X(\Oo_\lambda^\vee, \EG)$ if the $\lambda' \in \Oo^\vee_\lambda$ ($\lambda'$ is constructed given $\psi'$ as above.). 

\subsection{Parameterizing Theta Forms of a even complex nilpotent orbit.}
Let $\Oo^\vee \subset \gc^\vee$ be a complex even $\GC^\vee$-nilpotent orbit. The goal of this section is to find a ``good" parameterization for the theta-forms of $\Oo^\vee$ defined in (\ref{theta_real_form_of_O}). Using the Kostant-Sekiguchi correspondence, we get a parameterization of the real forms of $\Oo^\vee$ defined in (\ref{real_form_of_O}). 
\\

Let $\{X^\vee, Y^\vee, H^\vee\}$ be the Jacobson-Morosov triple for $\Oo^\vee$, so that $\Oo^\vee = \GC^\vee \cdot X^\vee$. We recall a special case/corollary of (\cite{\ABV}, Theorem 27.10),
\begin{cor}\label{thm_real_forms_of_orbit}
Let $\theta^\vee$ be the Cartan involution of $\GC^\vee$ satisfying $(\GC^\vee)^{\theta^\vee} = \KC^\vee$ and let $\gc^\vee = \kc^\vee\oplus \ssc^\vee$ be the Cartan decompostion. Let $\GR^\vee$ be the real form of $\GC^\vee$ corresponding to the Cartan involution $\theta^\vee$. Furthermore, assume that the semisimple orbit $\Oo_\lambda^\vee$ corresponding to $\lambda$ comes from a homomorphism $\psi_1: \SL \longrightarrow \GC^\vee$ attached to the even nilpotent $\GC^\vee$ orbit $\Oo^\vee$.
Then there is a correspondence between the following sets:
\begin{enumerate}
\item The equivalence classes of unipotent Arthur parameters supported on $X(\Oo_\lambda^\vee, \EG)$.
\item $\KC^\vee$ orbits of parabolic subgroups $\QC^\vee \in \mathcal{P}^\vee=\GC^\vee/\PC^\vee$, where $\PC^\vee$ is a fixed parabolic subgroup of $\GC^\vee$ such that its Levi factor $\LC^\vee$ has Lie algebra $\lc^\vee = \text{Cent}_{\gc^\vee}(\lambda)$ and $\QC^\vee$ satisfies:
\begin{enumerate}
\item $\theta^\vee(\QC)= \QC^\vee $. 
\item Let $\qc^\vee = \lc^\vee+ \nc^\vee$ be the Langlands decomposition of $\mathfrak{q} ^\vee= \text{Lie}(\QC^\vee)$, then 
$$ \nc^\vee \cap \ssc^\vee \cap \Oo^\vee \neq \emptyset$$ 
\end{enumerate}
\item $\KC^\vee$ orbits on $\ssc^\vee \cap \Oo^\vee$. 
\end{enumerate}
\end{cor}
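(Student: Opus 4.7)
The plan is to establish the three-way bijection by constructing the pairwise correspondences (1) $\leftrightarrow$ (2) and (2) $\leftrightarrow$ (3) explicitly, treating the result as a direct specialization of Theorem 27.10 of \cite{\ABV} to the even unipotent case. Throughout I fix a Jacobson--Morozov triple $\{X^\vee, H^\vee, Y^\vee\}$ for $\Oo^\vee$ with $H^\vee$ dominant and write $\lambda = \tfrac{1}{2}H^\vee$; the evenness hypothesis ensures $\lambda \in P^\vee(\GC^\vee)$, so the standard parabolic $\PC(\lambda)^\vee$ of Levi factor $\text{Cent}_{\gc^\vee}(\lambda)$ is well defined.

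For (1) $\leftrightarrow$ (2), I start from a unipotent Arthur parameter $\psi$ supported on $X(\Oo_\lambda^\vee, \EG)$ and send it to the parabolic $\PC(\lambda')^\vee$, where $\lambda' = d\psi_1\,\text{diag}(\tfrac{1}{2}, -\tfrac{1}{2}) \in \Oo_\lambda^\vee$. The $\theta^\vee$-stability is automatic: since $\psi(j) \in \text{Cent}_{\GC^\vee}(\psi|_{\SL})$, the involution $\theta^\vee = \Int(\psi(j))$ fixes $\lambda'$, and hence fixes $\PC(\lambda')^\vee$. Non-emptiness of $\nc^\vee \cap \ssc^\vee \cap \Oo^\vee$ is witnessed by $E_\psi$: the equivariance of $\psi$ with the diagonal $SL(2,\mathbb{C})$-element forces $\Ad(\psi(j)) E_\psi = -E_\psi$, placing $E_\psi$ in $\ssc^\vee$. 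Conversely, a $\theta^\vee$-stable parabolic $\QC^\vee$ of the prescribed type recovers $\lambda'$ (up to $\LC^\vee$-conjugacy) as the unique semisimple element whose weight grading is the Langlands filtration, and $\psi$ is reconstructed by choosing $\psi(j) \in \EG - \GC^\vee$ to realize the given Cartan involution on $\LC^\vee$.

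For (2) $\leftrightarrow$ (3), the forward map sends $\QC^\vee$ to the $\KC^\vee$-orbit through any chosen $E \in \nc^\vee \cap \ssc^\vee \cap \Oo^\vee$. Because $\nc^\vee \cap \Oo^\vee$ is the single dense Richardson $\PC^\vee$-orbit, intersecting with the $\theta^\vee$-eigenspace $\ssc^\vee$ yields a single $(\PC^\vee \cap \KC^\vee)$-orbit, so the resulting $\KC^\vee$-orbit in $\ssc^\vee \cap \Oo^\vee$ is independent of the choice of $E$. For the reverse map, given $E \in \ssc^\vee \cap \Oo^\vee$, I would apply the Kostant--Rallis refinement of Jacobson--Morozov to extract a normal triple $\{E, H, F\}$ with $H \in \kc^\vee$ and $F \in \ssc^\vee$; setting $\mu = \tfrac{1}{2} H$, the parabolic $\PC(\mu)^\vee$ is $\theta^\vee$-stable (as $H \in \kc^\vee$ implies $\theta^\vee(\mu) = \mu$) and has $E$ in the $\ssc^\vee$-part of its nilradical by construction.

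The hard part will be verifying well-definedness of (3) $\to$ (2) up to $\KC^\vee$-conjugacy: two Kostant--Rallis triples through the same $E$ differ by elements of $\text{Cent}_{\KC^\vee}(E)$, and one must check that this ambiguity descends to $\KC^\vee$-conjugacy of the resulting parabolics, and further that choosing a different representative in the $\KC^\vee$-orbit of $E$ yields the same class. The cleanest route is to pipe both constructions through Theorem 27.10 of \cite{\ABV}, where the underlying three-way correspondence is established at the level of Arthur parameters; the even hypothesis then collapses the general ABV framework onto precisely the Richardson data at play here, forcing the parabolic to depend only on the $\KC^\vee$-orbit of $E$.
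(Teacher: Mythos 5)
Your proposal is correct and ultimately lands on the same argument as the paper: recognize that evenness forces $\Oo^\vee$ to be the Richardson orbit induced from the trivial orbit on $\lc^\vee$ (so $\Zz_{\Pp^\vee} = \Oo^\vee$), then invoke Theorem~27.10 of \cite{\ABV}. You spend most of the space sketching explicit maps between the three sets (sending $\psi$ to $\PC(\lambda')^\vee$, and recovering a parabolic from $E \in \ssc^\vee \cap \Oo^\vee$ via a Kostant--Rallis normal triple), and honestly flag that the well-definedness of (3)~$\to$~(2) up to $\KC^\vee$-conjugacy is nontrivial; your final paragraph concedes that the clean way to close this is exactly the ABV~27.10 citation, which is all the paper does. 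The one step you state slightly too casually is that $\nc^\vee \cap \ssc^\vee \cap \Oo^\vee$ is a single $(\PC^\vee \cap \KC^\vee)$-orbit: the paper does not assert this; what it uses (later, via Yamashita's theorem) is only that $\nc^\vee \cap \ssc^\vee$ is dense in a single $\KC^\vee$-orbit, which is weaker. Since you defer to ABV~27.10 anyway, this does not break the proof, but that sentence as written overstates what you have established.
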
 
\begin{proof}
Let $\{X^\vee, Y^\vee, H^\vee\}$ be the Jacobson-Morosov triple for $\Oo^\vee$ and let $\lc^\vee = \text{Cent}_{\gc^\vee}(H^\vee)$. Using (\cite{\CMc}, Corollary 7.1.7), we note that the even nilpotent orbit $\Oo^\vee$ is a Richardson orbit, in fact, it is induced from the trivial obit on $\lc^\vee$. This implies that $\Zz_{\Pp^\vee} = \Oo^\vee$. Using (\cite{\ABV}, Theorem 27.10) in the light of these observations we get the three correspondences. 
\end{proof}

The conditions (1) and (3) in the Corollary above are most intuitive to the reader, yet checking them is not easy. We will use the Atlas Setting and software to work with condition (2): (2a) is has been implemented and can be easily tested for, (2b) is the more difficult one to test, and our method uses representation theory to arrive at an algorithm to test it succesfully in many cases, and point out the cases when it cannot be computed. 
\\

We continue to be in the Atlas Setting as follows: $\GC$ a complex connected reductive algebraic group,  $\GC^\vee$ the dual group. Fix a strong real form $\eta^\vee$ of $\GC^\vee$ and let $\theta^\vee_{\eta^\vee}$ be the corresponding Cartan involution of $\GC^\vee$. Let $\KC^\vee = \text{Cent}_{\GC^\vee}(\eta^\vee)$. Let $\gc^\vee = \kc^\vee \oplus \ssc^\vee$ be the Cartan decomposition of $\gc^\vee$ with respect to $\theta_{\eta^\vee}^\vee$. Furthermore, we choose $\xi$ to be the dual quasisplit strong real form of $\GC$, in the dual inner class of $^\Gamma \GC^\vee$. 
\\

Fix a regular integral infinitesimal character $\gamma \in \hc^*\simeq \hc^\vee$. We are fixing $\xi$, $\eta^\vee$ and $\gamma$, so we will suppress them from the notation. In this setting, we have a block of irreducible representations of $\GR$ at infinitesimal character $\gamma$, $\Bb = \Bb(\xi, \eta^\vee, \gamma) \subset  \Pi(\gc, \KC, \gamma)$. Corresponding to $\Bb$, we have the dual block $\Bb^\vee$ of irreducible $(\gc^\vee, \KC^\vee)$-modules at infinitesimal character $\gamma^\vee$. Since $\gamma^\vee$ is integral, the full Weyl group $\W^\vee$ acts on $\Bb^\vee$. Using the coherent continuation action, $\Bb^\vee$ decomposes into into disjoint $HC$-cells:
\begin{equation}
\mathcal{B}^\vee = \coprod_{ HC-\text{cells}} \Cc^\vee.
\end{equation}
Recall that $\AVC(\Cc^\vee)$ makes sense, since the associated variety remains constant for a fixed cell $\Cc^\vee$. 
\\

Now fix an even nilpotent orbit $\Oo^\vee\subset \gc^\vee$ with Jacobson-Morosov triple $\{X^\vee, Y^\vee, H^\vee\}$ and fix $\lambda=\frac{1}{2}H^\vee$ and let $\lc^\vee = \text{Cent}_{\gc^\vee}(\lambda)$. Let 
\begin{equation}
\Pp^\vee(\lc^\vee) := \{ \text{$\theta^\vee$ - stable parabolic subalgebras of $\gc^\vee$ with Levi-factor $\lc^\vee$}\}\subset \Pp^\vee.
\end{equation}
Then, every $\pc^\vee \in \Pp^\vee(\lc^\vee)$ is conjugate to a parabolic subalgebra the form $\pc(\lambda')^\vee$ for some semisimple $\lambda' \in \lc^\vee$ and the $\theta^\vee$-stable condition comes down to checking that $\theta^\vee(\lambda') = \lambda'$. 

\begin{defn}[Parameter set for theta forms of $\Oo^\vee$]\label{defn_S_O}
Suppose $\eta^\vee$ is a strong real form of $\GC^\vee$ and $\theta^\vee = \text{Int}(\eta^\vee)$ is a corresponding Cartan involution of $\GC^\vee$. Associated to the pair $(\Oo^\vee, \eta^\vee)$ is the set 
\begin{equation}
\Ss(\Oo^\vee, \eta^\vee) := \{\pc^\vee\in \KC^\vee\backslash\Pp^\vee(\lc^\vee) \;|\; \theta^\vee(\pc^\vee) =\pc^\vee, \;\;  \nc^\vee \cap \ssc^\vee \cap \Oo^\vee \neq \emptyset \},
\end{equation} 
where $\pc^\vee = \lc^\vee + \nc^\vee$ is Langlands decomposition of $\pc^\vee$ and $\gc^\vee = \kc^\vee \oplus \ssc^\vee$ is the Cartan decomposition of $\gc^\vee$. In our setting, $\eta^\vee$ is fixed, so we will drop it from the notation, i.e. the parameter set will be denoted as $\Ss(\Oo^\vee)$. 
\end{defn}

As noted in Corollary \ref{thm_real_forms_of_orbit}, we know that $\Ss(\Oo^\vee)$ parameterizes the theta forms of $\Oo^\vee$. Even though the conditions defining $\Ss(\Oo^\vee)$ are explicit, it is not clear how one could actually check them and in the process compute $\Ss(\Oo^\vee)$. The first result of this paper addresses this problem:

\begin{theorem}\label{thm_real_orbits}
Suppose $\eta^\vee$ a strong real form of $\GC^\vee$. Choose $\xi$ to be the dual quasisplit strong real form of $\GC$ corresponding to $\eta^\vee$. Let $\Bb(\xi, \eta^\vee, \gamma)$ and $\Bb^\vee(\xi, \eta^\vee, \gamma^\vee)$ be blocks of representations at regular integral infinitesimal characters $\gamma$, $\gamma^\vee$ repectively. Let $\theta$ and $\theta^\vee$ be Cartan involutions of $\GC$ and $\GC^\vee$ correponding to $\xi$ and $\eta^\vee$. 
\\

Let $\Oo^\vee$ be an even nilpotent orbit in the complexified Lie algebra $\gc^\vee$. Let $\{X^\vee, Y^\vee, H^\vee\}$ be the Jacobson-Morosov triple for $\Oo^\vee$ and let $\lc^\vee = \text{Cent}_{\gc^\vee}(H^\vee)$. Let $\gc^\vee = \kc^\vee \oplus \ssc^\vee$ be the Cartan decomposition of $\gc^\vee$ with respect to $\theta^\vee$. Let $\Ss(\Oo^\vee)$ be set in Definition \ref{defn_S_O}
\\

Then,
\begin{equation}
\Ss(\Oo^\vee) \leftrightarrow \{ \pc^\vee\KC^\vee\backslash\Pp^\vee(\lc^\vee) \;|\;\; \AVC(\Rr_{\pc^\vee}(\chi_{\text{triv}})) = \overline{\Oo^\vee}\},
\end{equation}
there is an algorithm to explicitly compute the latter set in Atlas.
 there is an explicit algorithm to compute the set $\Ss(\Oo^\vee)$. This algorithm is implementable in the Atlas of Lie Groups software. 
\end{theorem}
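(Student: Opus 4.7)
My plan is to realize the claimed bijection by identifying the theta associated variety of the cohomologically induced module $\Rr_{\pc^\vee}(\chi_{\text{triv}})$ via Yamashita's theorem, and then saturating to $\GC^\vee$ to obtain $\AVC$. Concretely, for each $\pc^\vee = \lc^\vee + \nc^\vee \in \Pp^\vee(\lc^\vee)$ with $\theta^\vee(\pc^\vee) = \pc^\vee$, the trivial character has differential $\lambda = 0$, so the good-range hypothesis of Theorem \ref{Aq_AV} reduces to $\langle \rho(\nc^\vee), \as\rangle > 0$ for $\as \in \Delta(\nc^\vee)$, which I would verify directly from the structure of $\pc(\lambda)^\vee$ (noting that for an even $\Oo^\vee$, the element $\lambda = \tfrac{1}{2}H^\vee$ is in the interior of the dominant chamber of $\lc^\vee$, and $\rho(\nc^\vee)$ pairs strictly positively with the roots of $\nc^\vee$). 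Hence Theorem \ref{Aq_AV} gives
\begin{equation}
\AVT(\Rr_{\pc^\vee}(\chi_{\text{triv}})) = \overline{\KC^\vee \cdot (\nc^\vee \cap \ssc^\vee)},
\end{equation}
and passing to $\GC^\vee$-saturation yields $\AVC(\Rr_{\pc^\vee}(\chi_{\text{triv}})) = \overline{\GC^\vee \cdot (\nc^\vee \cap \ssc^\vee)}$.

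Next I would use the Richardson structure of $\Oo^\vee$. Since $\Oo^\vee$ is even, the corollary following Theorem 7.1.6 of \cite{\CMc} shows that $\Oo^\vee$ is the Richardson orbit induced from the trivial orbit on $\lc^\vee$, so for any $\pc^\vee \in \Pp^\vee(\lc^\vee)$ we have $\overline{\GC^\vee \cdot \nc^\vee} = \overline{\Oo^\vee}$, and therefore $\overline{\GC^\vee \cdot (\nc^\vee \cap \ssc^\vee)} \subseteq \overline{\Oo^\vee}$ is a union of nilpotent orbit closures contained in $\overline{\Oo^\vee}$. Equality holds if and only if the dense orbit $\Oo^\vee$ is contained in $\GC^\vee \cdot (\nc^\vee \cap \ssc^\vee)$, i.e., if and only if $\nc^\vee \cap \ssc^\vee \cap \Oo^\vee \neq \emptyset$. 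This is exactly the defining condition of $\Ss(\Oo^\vee)$ in Definition \ref{defn_S_O}, so combining with the $\theta^\vee$-stability condition (already built into the enumeration over $\Pp^\vee(\lc^\vee)$) gives the claimed bijection.

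For the algorithmic assertion, I would show the right-hand set is computable in the Atlas setting as follows. First, the $\KC^\vee$-conjugacy classes of $\theta^\vee$-stable parabolic subalgebras with Levi $\lc^\vee$ are parameterized by $\Xx[y]/\W_{S(\lambda)}$ where $\lambda = \tfrac{1}{2}H^\vee$ and $S(\lambda)$ is the set of simple roots vanishing on $\lambda$, as described in Section 4.1; Atlas enumerates these directly. For each such parabolic $\pc^\vee$, Atlas constructs the module $\Rr_{\pc^\vee}(\chi_{\text{triv}})$ through Zuckerman's functor. To compute $\AVC$ of this module, I would locate the Harish-Chandra cell of the dual block $\Bb^\vee$ in which it sits, use the coherent continuation action together with the tau-invariant computation to extract the Levi set of the cell, and apply the Noel-Jackson algorithm to recover the special Weyl group representation attached to the cell; the Springer correspondence then returns the nilpotent orbit which, by Proposition \ref{AV_of_Cell}, is $\AVC(\Rr_{\pc^\vee}(\chi_{\text{triv}}))$. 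Comparing with $\overline{\Oo^\vee}$ completes the test.

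The main obstacle I anticipate is verifying that the positivity hypothesis of Theorem \ref{Aq_AV} is in force: although the trivial character has $\lambda = 0$, this must be combined with the observation that $\pc^\vee = \pc(\lambda')^\vee$ for a suitable $\lambda'$ determined by the choice of parabolic of Levi $\lc^\vee$, and one must ensure $\rho(\nc^\vee)$ remains strictly dominant on the nilradical. A secondary technical point is that $\Rr_{\pc^\vee}(\chi_{\text{triv}})$ may occur with nontrivial multiplicity in its cell or may fail to be irreducible in borderline cases; these boundary situations are precisely the cases where the algorithm must flag failure, and characterizing them precisely (rather than proving an unconditional algorithm) is what the statement of the theorem allows for.
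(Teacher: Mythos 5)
Your argument is correct and follows the same overall strategy as the paper: parameterize $\KC^\vee\backslash\Pp^\vee(\lc^\vee)$ in Atlas, apply Yamashita's theorem to identify $\AVT(\Rr_{\pc^\vee}(\chi_{\text{triv}})) = \overline{\KC^\vee\cdot(\nc^\vee\cap\ssc^\vee)}$, saturate to $\AVC$, and then run the Noel--Jackson plus Springer machinery on cells to compute $\AVC$ in practice. The one genuine difference is in how you establish the equivalence $\AVC(\Rr_{\pc^\vee}(\chi_{\text{triv}})) = \overline{\Oo^\vee} \iff \nc^\vee\cap\ssc^\vee\cap\Oo^\vee\neq\emptyset$. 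The paper isolates this as Proposition \ref{prop_S(O)_description} and, for the reverse implication, cites Corollary \ref{thm_real_forms_of_orbit} (hence ABV Theorem 27.10) to conclude that $\Xi(\pc^\vee)$ is a theta form of some other orbit. You instead derive it directly from the Richardson structure: since $\Oo^\vee$ is induced from the trivial orbit on $\lc^\vee$, one has $\nc^\vee\subset\overline{\Oo^\vee}$, hence $\overline{\GC^\vee\cdot(\nc^\vee\cap\ssc^\vee)}\subseteq\overline{\Oo^\vee}$ with equality exactly when $\Oo^\vee$ meets $\nc^\vee\cap\ssc^\vee$ (by irreducibility of $\overline{\Oo^\vee}$ and the fact that orbit closures cover). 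This is a modest but genuine simplification: it reduces the dependence on the heavy ABV parametrization theorem to the elementary Richardson corollary of \cite{\CMc}, and makes the logical shape of the equivalence transparent. Your explicit flagging of the positivity hypothesis $\langle\rho(\nc^\vee),\as\rangle>0$ for $\as\in\Delta(\nc^\vee)$ is also a useful point that the paper's proof silently assumes; it does hold here (strict dominance of $\rho(\nc)$ on $\Delta(\nc)$ is standard, and the even hypothesis on $\Oo^\vee$ guarantees $\lambda=\tfrac{1}{2}H^\vee$ lies in the required chamber), but it is worth verifying.
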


\begin{proof}
The computation of the set $\Ss(\Oo^\vee)$ involves checking for two conditions:
\begin{enumerate}
\item  We need to know how to check if a given parabolic is theta stable, which is a relatively simple task in Atlas.
\item We need to find a method to check the condition $ \nc^\vee \cap \ssc^\vee \cap \Oo^\vee \neq \emptyset$. This is difficult, to put it mildly, to check by hand. The main idea of this theorem is to replace this condition with something more amenable to computation, in this case to reduce it to computing the complex associated variety.  
\end{enumerate}
Let $\lambda=H^\vee$ be the semisimple element in the Jacobson-Morozov triple for $\Oo^\vee$. Recall that $\Pp^\vee(\lc^\vee) $ is the set of theta-stable parabolics having Levi factor $\lc^\vee = \text{Cent}_{\gc^\vee}(\lambda)$. As a first step we find a description of $\KC^\vee\backslash\Pp^\vee(\lc^\vee)$, the $\KC^\vee$-conjugacy classes of parabolics in $\Pp^\vee(\lc^\vee)$. 
\\
Let $S(\lambda)$ be the set of simple roots of $\gc^\vee$ which are singular on $\lambda$. Then, a $\KC^\vee$-conjugacy class of parabolic subgroups is determined by specifying a parabolic $\QC^\vee$ corresponding to the data $(y, S(\lambda))$, where $y$ is a representative for a $\KC^\vee$-orbit of $\GC^\vee/\BC^\vee$. The parabolic $\QC^\vee(y, S(\lambda))$ is $\theta^\vee$-stable if and only if $\theta^\vee_{y}(\lambda) = \lambda$, where $\theta^\vee_y$ is the Cartan involution on $\GC^\vee$ corresponding to the $\KGB$-element $y$. 
\\
Using this description of parabolics in Atlas, we can compute the set $\KC^\vee\backslash\Pp^\vee(\lc^\vee)$ in Atlas.
\\

Since by definition, $\Ss(\Oo^\vee) \subset \KC^\vee\backslash \Pp^\vee(\lc^\vee)$, our goal will be to to pare down $\Ss'(\Oo^\vee):=\KC^\vee\backslash \Pp^\vee(\lc^\vee)$ to $\Ss(\Oo^\vee)$. To achieve this, we will use the condition second condition defining $\Ss(\Oo^\vee)$, that is $\nc^\vee \cap \ssc^\vee \cap \Oo^\vee \neq \emptyset$, for a $\nc^\vee$ arising as the nilpotent part of the the Langlands decomposition of $\pc^\vee\in \Ss'(\Oo^\vee)$. 
\\

Define the map:
\begin{equation}
\Xi : \KC^\vee\backslash\Pp^\vee(\lc^\vee) \longrightarrow \{ \text{$\KC^\vee$ - orbits on $\ssc^\vee$}\},\quad  \pc^\vee \mapsto \KC^\vee\cdot (\nc^\vee\cap\ssc^\vee).
\end{equation}

Using Theorem \ref{Aq_AV}, we have $\nc^\vee\cap\ssc^\vee$ is open and dense in a single $\KC^\vee$-orbit of $\ssc^\vee$, as a result, the map $\Xi$ is well defined. 
\\

We note the following consequences of Theorem \ref{thm_real_forms_of_orbit}:
\begin{enumerate}
\item The image of $\Xi$ contains all the $\KC^\vee$ - orbits on $\ssc^\vee\cap \Oo^\vee$. 
\item The restriction of $\Xi$ to $\Ss(\Oo^\vee)$ is a bijective correspondence between $\Ss(\Oo^\vee)$ and $\KC^\vee$-orbits on $\ssc^\vee\cap\Oo^\vee$.
\end{enumerate}

Therefore, to compute $\Ss(\Oo^\vee)$, it comes down to checking if $\Xi(\pc^\vee)$ is a $\KC^\vee$-orbit on $\ssc^\vee\cap\Oo^\vee$. 

\begin{prop}\label{prop_S(O)_description}
Suppose we are in the above setting and let $\pc^\vee \in \KC^\vee\backslash\mathcal{P}^\vee(\lc^\vee)$. Then $\Xi(\pc^\vee)$ is a $\KC^\vee$-orbit on $\ssc^\vee\cap\Oo^\vee$ (that is $\pc^\vee\in \Ss(\Oo^\vee)$) if and only if $\AVC(\Rr_{\pc^\vee}(\chi_{\text{triv}})) = \overline{\Oo^\vee}$, where $\chi_{\text{triv}}$ is the trivial character on $\pc^\vee$. 
\end{prop}
\begin{proof}
Suppose $\Xi(\pc^\vee)$ is a $\KC^\vee$-orbit on $\ssc^\vee\cap\Oo^\vee$, then $\nc^\vee\cap\ssc^\vee\cap \Oo^\vee \neq \emptyset$. Let $X^\vee\in \nc^\vee\cap\ssc^\vee\cap \Oo^\vee$ be a generic element, then using Theorem \ref{Aq_AV}, we see that 
\begin{equation*}
\AVT(\mathcal{R}_{\pc^\vee}(\chi_{\text{triv}})) = \overline{\KC^\vee\cdot (\nc^\vee\cap \ssc^\vee)} = \overline{\KC^\vee\cdot X^\vee},
\end{equation*}
therefore  $\Xi(\pc^\vee)$ is by definition $\AVT(\Rr_{\pc^\vee}(\chi_{\text{triv}}))$. 
\\
Furthermore, using the relationship between $\AVC$ and $\AVT$ for a fixed module, we have
\begin{eqnarray*}
 \AVC(\Rr_{\pc^\vee}(\chi_{\text{triv}}))& =& \overline{\GC^\vee \cdot (\KC^\vee\cdot (\nc^\vee\cap \ssc^\vee))}\\
 &=&\overline{\GC^\vee \cdot (\KC^\vee\cdot X^\vee)}\\
 &=& \overline{\GC^\vee \cdot X^\vee}\\
 &=&\overline{\Oo^\vee}
\end{eqnarray*}

This implies that $\AVC(\Rr_{\pc^\vee}(\chi_{\text{triv}})) =  \overline{\Oo^\vee}$ if  $\nc^\vee\cap\ssc^\vee\cap \Oo^\vee \neq \emptyset$.
\\

Now, if $\Xi(\pc^\vee)$ is not a $\KC^\vee$-orbit on $\ssc^\vee\cap\Oo^\vee$, then by Corollary \ref{thm_real_forms_of_orbit}, $\Xi(\pc^\vee) =\AVT(\Rr_{\pc^\vee}(\chi_{\text{triv}}))$ cannot be a theta form of the complex nilpotent orbit $\Oo^\vee$, so that $\AVC(\Rr_{\pc^\vee}(\chi_{\text{triv}})) = \GC^\vee\cdot \AVT(\Rr_{\pc^\vee}(\chi_{\text{triv}})) \neq \overline{\Oo^\vee}$. This completes the proof of the proposition. 
\end{proof}
Given the fact that we can compute $\KC^\vee\backslash\mathcal{P}^\vee(\lc^\vee)$ using Atlas, Proposition \ref{prop_S(O)_description} reduces the computation of $\Ss(\Oo^\vee)$ to the computation of complex associated varieties of all representations in the given block $\Bb^\vee$. It turns out that there are algorithms to take care of this latter step, it is dealt in two cases: 
\begin{enumerate}
\item When $\GC^\vee$ is of classical type. 
\item When $\GC^\vee$ is of exceptional type.
\end{enumerate} 
For Case 1, an algorithm by Noel-Jackson provides the special $\W^\vee$-representation attached to a representation $\pi^\vee$ of $\GR^\vee$ by computing the special $\W^\vee$-representation $\sigma(\Cc^\vee)$ attached to the $HC$-cell $\Cc^\vee$ containing $\pi^\vee$, using the tau-invariants of the representations in $\Cc^\vee$.
\\
We can then apply the Springer correspondence to compute the special nilpotent orbit attached to $\sigma(\Cc^\vee)$, the closure of this special nilpotent orbit is  $\AVC(\pi^\vee))$. Both, the Noel-Jackson algorithm and the Springer correspondences can be implemented as functions in the Atlas software, so that given the block $\Bb^\vee$, one gets an output specifiying $\AVC(\Cc^\vee)$ for every $HC$-cell in $\Bb^\vee$.
\\
For Case 2, we use tables computed by Binegar to find out what the $\AVC(\Cc^\vee)$, for a given $HC$-cell in $\Bb^\vee$. There is an algorithm due to Vogan that would compute the special Weyl group representation of an irreducible representation of a group of exceptional type, work is in progress to write it down in a way that could be implemented in the Atlas software. 
\\

This completes the proof of the theorem. For the reader's convenience, we summarize the algorithm to compute $\Ss(\Oo^\vee)$:
\begin{enumerate}
\item Given $\Oo^\vee$, compute the neutral element $H^\vee$ in the Jacobson-Morozov triple for $\Oo^\vee$ and let $\lambda=H^\vee$. Let $\lc^\vee = \text{Cent}_{\gc^\vee}(\lambda)$. 
\item Compute the set $\Ss'(\Oo^\vee):=\KC^\vee\backslash\Pp^\vee(\lc^\vee)$, which is possible in Atlas.
\item Using Noel-Jackson algorithm or the tables by Birne, compute the $\AVC(\pi^\vee)$ for every $\pi^\vee\in \Bb^\vee$. 
\item To pare down $\Ss'(\Oo^\vee)$ to $\Ss(\Oo^\vee)$, for every $\pc^\vee\in \Ss'(\Oo^\vee)$ compute $\AVC(\Rr_{\pc^\vee}(\chi_{\text{triv}}))$ using previous step. If $\AVC(\Rr_{\pc^\vee}(\chi_{\text{triv}})) = \overline{\Oo^\vee}$ put $\pc^\vee$ into $\Ss(\Oo^\vee)$ else discard it from the list. 
\item Since $\Ss'(\Oo^\vee)$ was a finite set, this algorithm will terminate in finite number of steps and at the end we will be left with exactly $\Ss(\Oo^\vee)$. 
\end{enumerate}
\end{proof}

\subsection{Computing Real Associated Variety}
Continuing in the setting of the last section, now we describe an algorithm to compute the theta-real-associated variety of an irreducible $(\gc^\vee, \KC^\vee)$-module, $\pi^\vee \in \Bb^\vee$, at infinitesimal character $\gamma^\vee$.  Let $\AVC(\pi^\vee) = \overline{\Oo^\vee}$. We specify certain ``good" conditions on $\pi^\vee$, which when satisfied, $\AVT(\pi^\vee)$ can be explicitly computed. 
\\

To begin, fix the nilpotent orbit $\Oo^\vee$ in $\gc^\vee$ and let $\lambda \in H^\vee$ be the semisimple element in the Jacobson-Morozov triple corresponding to $\Oo^\vee$. Let $\pc^\vee(\lambda) = \lc^\vee+\nc(\lambda)^\vee$ be the parabolic subalgebra corresponding to $\lambda$. If we assume that $\Oo^\vee$ is even, the set $\Ss(\Oo^\vee)$ corresponding to the real forms of $\Oo^\vee$ in terms of representatives of $\KC^\vee$-conjugacy classes of $\theta^\vee$-stable parabolics, is computable and the algorithm is described in Section 6.4. The first ``good" condition to compute $\AVT(\pi^\vee)$ is as follows:
\\

\begin{center}
\begin{large} {Condition 1}
\end{large}
\\
\textbf{$\AVC(\pi^\vee)$ is the closure of an even nilpotent orbit $\Oo^\vee$.}
\end{center} 
\vspace{.3in}

Fix a parabolic $\qc^\vee \in \Ss(\Oo^\vee)$ and let $\qc^\vee=\lc^\vee+\nc^\vee$ be its Langlands decomposition. Let $\QC^\vee = \LC^\vee\NC^\vee$ be the corresponding parabolic of $\GC^\vee$, with $\LC^\vee$ the complexification of the real Levi $\LR^\vee$. 
\\
Let $\lambda$ be a one dimensional $(\lc^\vee, \LC^\vee\cap\KC^\vee)$-module, we then cohomologically induce it up to a $(\gc^\vee, \KC^\vee)$-module, denoting this final representation as $\Rr_{\qc^\vee}(\lambda)$. 

\begin{prop}
Suppose $\qc^\vee$ is as above and $\chi_1$ and $\chi_2$ are two one dimensional representations of $\LC^\vee\cap\KC^\vee$, such that $\chi_1|_{(\LC^\vee\cap\KC^\vee)_0} = \chi_2|_{(\LC^\vee\cap\KC^\vee)_0}$, then 
\begin{equation}
\AVT(\Rr_{\qc^\vee}(\chi_1)) =\AVT(\Rr_{\qc^\vee}(\chi_2)).
\end{equation}
\end{prop}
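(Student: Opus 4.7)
The plan is to reduce the statement to Theorem \ref{Aq_AV} (Yamashita) by observing that the right-hand side of the formula in that theorem depends only on the parabolic $\qc^\vee$ and not on the character being cohomologically induced, once the positivity hypothesis on the differential is met.

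First I would observe that the hypothesis $\chi_1|_{(\LC^\vee\cap\KC^\vee)_0}=\chi_2|_{(\LC^\vee\cap\KC^\vee)_0}$ is equivalent to the statement that the Lie-algebra differentials agree:
\begin{equation*}
d\chi_1 \;=\; d\chi_2 \;\in\; (\lc^\vee)^*.
\end{equation*}
Indeed, any one-dimensional representation of $\LC^\vee\cap\KC^\vee$ is determined on the identity component by its differential, so the two characters differ only by a character $\eta$ of the finite component group $(\LC^\vee\cap\KC^\vee)/(\LC^\vee\cap\KC^\vee)_0$, inflated to $\LC^\vee\cap\KC^\vee$.

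Next, since $\chi_1$ and $\chi_2$ are extended to $\qc^\vee$ by letting $\nc^\vee$ act trivially, they define the same $\qc^\vee$-module structure, and in particular the positivity condition $\langle d\chi_i+\rho(\nc^\vee),\alpha\rangle>0$ for every $\alpha\in\Delta(\nc^\vee)$ holds simultaneously for $\chi_1$ and $\chi_2$ (it depends only on the common differential). Assuming this positivity is satisfied, Theorem \ref{Aq_AV} applies to both modules and yields
\begin{equation*}
\AVT(\Rr_{\qc^\vee}(\chi_1)) \;=\; \overline{\KC^\vee\cdot(\nc^\vee\cap\ssc^\vee)} \;=\; \AVT(\Rr_{\qc^\vee}(\chi_2)),
\end{equation*}
since the right-hand side depends only on $\qc^\vee$ and the Cartan decomposition, not on the chosen character. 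This gives the desired equality directly in the regime where Theorem \ref{Aq_AV} applies.

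The main obstacle is the case when positivity fails, since then Theorem \ref{Aq_AV} cannot be invoked as a black box. Here I would argue instead at the level of filtrations: the $U(\gc^\vee)$-module structure of $\Rr_{\qc^\vee}(\chi_i)$ is determined by $\chi_i$ viewed as a $\qc^\vee$-module, which in turn is determined by $d\chi_i$; hence $\Rr_{\qc^\vee}(\chi_1)$ and $\Rr_{\qc^\vee}(\chi_2)$ agree as $U(\gc^\vee)$-modules up to the twist by $\eta$, which only alters how the component group acts. Choosing a generating subspace $V_0$ for $\Rr_{\qc^\vee}(\chi_1)$ that is stable under both $\KC^\vee$-structures (possible because $\eta$ is trivial on the identity component and $V_0$ can be made a sum of component-group isotypics), one obtains the identical filtration $\{U_n(\gc^\vee)V_0\}$ on both modules. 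The associated graded, together with its $S(\gc^\vee/\kc^\vee)$-annihilator, is therefore the same, so the supports in $(\gc^\vee/\kc^\vee)^*$ coincide. This reduces everything to checking that the twist by $\eta$ does not affect the annihilator ideal in $S(\gc^\vee/\kc^\vee)$, which is immediate since $\eta$ is trivial on $(\LC^\vee\cap\KC^\vee)_0$ and so acts by scalars on each graded piece compatibly.
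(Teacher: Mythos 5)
Your first paragraph (the case when the positivity hypothesis $\langle d\chi_i+\rho(\nc^\vee),\alpha\rangle>0$ holds) is correct and is essentially the paper's argument: the hypothesis forces $d\chi_1=d\chi_2$, and Theorem \ref{Aq_AV} then gives $\AVT(\Rr_{\qc^\vee}(\chi_i))=\overline{\KC^\vee\cdot(\nc^\vee\cap\ssc^\vee)}$, which visibly depends only on $\qc^\vee$. You are also right that the positivity condition depends only on the common differential, so it holds or fails simultaneously for $\chi_1$ and $\chi_2$ — a point the paper leaves implicit.

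Your supplementary argument for the case when positivity fails, however, contains a gap. You claim that $\Rr_{\qc^\vee}(\chi_1)$ and $\Rr_{\qc^\vee}(\chi_2)$ ``agree as $U(\gc^\vee)$-modules up to the twist by $\eta$'' because they are induced from the same $\qc^\vee$-module. That step is not justified: the cohomological induction functors $\text{pro}$, $\Gamma$, and the derived functors $\mathcal R^j$ all make essential use of the $\LC^\vee\cap\KC^\vee$- and $\KC^\vee$-module structures, not just the $\qc^\vee$-module structure of the input. Changing $\chi$ by a character $\eta$ of the finite component group of $\LC^\vee\cap\KC^\vee$ (which does not in general extend to $\KC^\vee$ or to $\GR^\vee$) can genuinely change the resulting $(\gc^\vee,\KC^\vee)$-module, including its underlying $\gc^\vee$-structure, its $\KC^\vee$-types, and hence the filtration $\{U_n(\gc^\vee)V_0\}$. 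The assertion that one can choose $V_0$ so that ``one obtains the identical filtration on both modules'' and that the $S(\gc^\vee/\kc^\vee)$-annihilators coincide would need to be proved, not observed — as stated it is not a consequence of the hypotheses. Since the paper's proof only asserts the statement via Theorem \ref{Aq_AV} (and thus implicitly works in the good range), this extra case is not needed to match the paper, but if you want to include it you should either restrict to the setting where Theorem \ref{Aq_AV} applies or supply a genuine argument (e.g.\ via Zuckerman translation or by analyzing the $K$-type filtration of the induced modules directly).
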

\begin{proof}
Since $\chi_1|_{(\LC^\vee\cap\KC^\vee)_0} = \chi_2|_{(\LC^\vee\cap\KC^\vee)_0}$, we have an equality of derivatives $d\chi_1 = d \chi_2$, let's call this $\lambda_0$. Using Theorem \ref{Aq_AV} and the discussion preceding it, we know that the theta-associated variety $\AVT(\Rr_{\qc^\vee}(\chi_1))$ and $\AVT(\Rr_{\qc^\vee}(\chi_2))$ depend only on $\nc(\lambda_0)$, and hence the equality of the two thetal associated varieties follows. 
\end{proof}

We know how to compute all the real forms of $\Oo^\vee$ in terms of $\theta^\vee$-stable parabolics, using Theorem \ref{thm_real_orbits} denoted as $\Ss(\Oo^\vee)$. Suppose $\Ss(\Oo^\vee) = \{ \pc_1^\vee, \pc_2^\vee, \dots, \pc_r^\vee\}$ and let $\{\LC_1^\vee, \LC_2^\vee, \dots, \LC_r^\vee\}$ be the corresponding the Levi subgroups. Choose one dimensional $(\lc^\vee_i, \LC_i^\vee\cap \KC^\vee)$-modules $\chi_{ij}$ for $j = 0, 1, 2, \cdots, s-1$ where $s = |\LC^\vee/\LC^\vee_0|$ such that the infinitesimal character of $\chi_{ij} = \rho(\LC_i)=\gamma_L$ (in fact, any regular integeral infinitesimal charactar for $\LC_i$ would work, we make this choice so that our induced modules have infinitesimal character $\rho(\GC)$). For a fixed $i$ and for all $j$,  $\AVT(\Rr_{\pc_i^\vee}(\chi_{ij}))$ corresponds to the $\KC^\vee$-orbit corresponding to the same $\pc_i$. That is, the real associated variety remains the same as we vary $j$ but keep $i$ fixed.
\\

Recall that $\mathcal{B}^\vee$ is partitioned into $\W^\vee$-cells, $\Cc^\vee$, given by 
$$ \mathcal{B}^\vee = \coprod \Cc^\vee.$$  

\begin{defn}[``Good" Cells]
In the above setting we say that a $\W^\vee$-cell $\Cc^\vee$ is good if it contains a representation of the form $\Rr_{\pc_i^\vee}(\chi_{ij})$ for some choice of $i$ and $j$. 
\end{defn}
Let $\Cc_{ij}^\vee$ be the set of good cells corresponding to the representations $\Rr_{\pc_i^\vee}(\chi_{ij})$ for $i\in \{ 1, 2, \dots, r\}$ and $j\in \{1, 2, \dots, s\}$. Note that $\AVT(\Cc_{i,j}^\vee)$  is the closure of a single $\KC^\vee$-orbit that corresponds to the $\theta^\vee$-stable parabolic $\pc_i^\vee \in \Ss(\Oo^\vee)$. 
\\

We can now state the second good condition:
\\

\begin{center}
\begin{large} {Condition 2}
\end{large}
\\
\textbf{$\pi^\vee$ lies in a good cell.}
\end{center} 
\vspace{.3in}

\begin{defn}[Good Condition]\label{defn_good_condition}
Suppose $\pi^\vee\in \Bb^\vee$. We say that $\pi^\vee$ satisfies the good condition if Conditions 1 and 2 (above) are both satisfied.  
\end{defn}
We are led to the following theorem:
\begin{theorem}\label{thm_real_ass_var}
Let $\xi$, $\eta^\vee$ be strong real forms of $\GC$, $\GC^\vee$ in the Atlas Setting, Definition \ref{AtlasSetting}. Let $\Bb$ and $\Bb^\vee$ be blocks of representations at regular integral infinitesimal characters $\gamma$, $\gamma^\vee$ repectively. Let $\Oo^\vee$ be a fixed even nilpotent orbit and let $\pi^\vee\in\Bb^\vee$. 
\\
Then, 
\begin{enumerate}
\item  there is an explicit algorithm to compute $\AVC(\pi^\vee)$. 
\item if $\pi^\vee$ satisfies the ``good condition" (that is when Condition 1 and 2 are both satisfied), then there is a algorithm to explicitly compute $\AVT(\pi^\vee)$ as the closure of a single $\KC^\vee$-orbit in $\ssc^\vee\cap\Oo^\vee$.
\item the algorithms described above are implemented into the Atlas Software.
\end{enumerate}  
\end{theorem}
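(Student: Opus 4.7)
The plan is to dispose of the three claims in turn, drawing together the machinery that has already been assembled: the cell-constancy of associated varieties (Proposition \ref{AV_of_Cell}), the parameterization of theta-forms of an even orbit (Theorem \ref{thm_real_orbits}), the Noel--Jackson/Binegar determination of the special cell representation, and the computation of $\AVT$ for a cohomologically induced module (Theorem \ref{Aq_AV}).

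For claim (1), the computation of $\AVC(\pi^\vee)$ factors through the $HC$-cell $\Cc^\vee\subset \Bb^\vee$ containing $\pi^\vee$, since by Proposition \ref{AV_of_Cell} the complex associated variety is constant on $\Cc^\vee$. First I would have Atlas compute the block $\Bb^\vee$, decompose it into $\W^\vee$-cells via the coherent continuation action, and extract the tau-invariants of each representation in $\Cc^\vee$. In classical type, the Noel--Jackson algorithm assembles these tau-invariants into a Levi set $\textbf{L}(\Cc^\vee)$ which determines the unique special $\W^\vee$-representation $\sigma(\Cc^\vee)$; feeding $\sigma(\Cc^\vee)$ through the (already implemented) classical-type Springer correspondence returns the special nilpotent orbit whose closure is $\AVC(\pi^\vee)$. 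In exceptional type one substitutes Binegar's tables for Noel--Jackson. This is an algorithmic implementation of the steps already spelled out at the end of the proof of Theorem \ref{thm_real_orbits}.

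For claim (2), assume Conditions 1 and 2. By Condition 1, $\AVC(\pi^\vee) = \overline{\Oo^\vee}$ with $\Oo^\vee$ even, so Theorem \ref{thm_real_orbits} applies and produces the finite list $\Ss(\Oo^\vee) = \{\pc_1^\vee,\dots,\pc_r^\vee\}$ of theta-stable parabolics parameterizing the $\KC^\vee$-orbits inside $\ssc^\vee\cap \Oo^\vee$. For each $\pc_i^\vee = \lc_i^\vee + \nc_i^\vee$ and each one-dimensional $(\lc_i^\vee, \LC_i^\vee\cap\KC^\vee)$-character $\chi_{ij}$ chosen so that $\Rr_{\pc_i^\vee}(\chi_{ij})$ has infinitesimal character $\gamma^\vee$, Theorem \ref{Aq_AV} gives
\begin{equation}
\AVT(\Rr_{\pc_i^\vee}(\chi_{ij})) \;=\; \overline{\KC^\vee \cdot (\nc_i^\vee \cap \ssc^\vee)},
\end{equation}
a single $\KC^\vee$-orbit closure in $\ssc^\vee \cap \Oo^\vee$. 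By Condition 2, $\pi^\vee$ lies in a good cell $\Cc_{ij}^\vee$, meaning $\Cc_{ij}^\vee$ contains some $\Rr_{\pc_i^\vee}(\chi_{ij})$; invoking Proposition \ref{AV_of_Cell} once more,
\begin{equation}
\AVT(\pi^\vee) \;=\; \AVT(\Rr_{\pc_i^\vee}(\chi_{ij})) \;=\; \overline{\KC^\vee \cdot (\nc_i^\vee \cap \ssc^\vee)},
\end{equation}
which is the desired closure of a single $\KC^\vee$-orbit in $\ssc^\vee \cap \Oo^\vee$.

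For claim (3), I would package the above into a finite procedure: run the algorithm of claim (1) to compute $\AVC$ on every cell of $\Bb^\vee$ and verify Condition 1; run the algorithm of Theorem \ref{thm_real_orbits} to enumerate $\Ss(\Oo^\vee)$; for each $(\pc_i^\vee, \chi_{ij})$ form $\Rr_{\pc_i^\vee}(\chi_{ij})$ using Atlas's existing cohomological-induction routine, locate the cell in which it lies, and thus tabulate the good cells together with the $\KC^\vee$-orbit each one resolves to; finally, given $\pi^\vee$, identify its cell and read off the answer. The main obstacle, and the place where care is needed, is the bookkeeping that links the abstract parabolics $\pc_i^\vee \in \Ss(\Oo^\vee)$ (produced via $\KGB$-data and the singular-simple-root set $S(\lambda)$) to the concrete induced modules $\Rr_{\pc_i^\vee}(\chi_{ij})$ that Atlas can produce and locate inside $\Bb^\vee$; the proposition preceding Definition \ref{defn_good_condition}, which shows that $\AVT(\Rr_{\pc^\vee}(\chi))$ depends only on $d\chi|_{(\LC^\vee\cap\KC^\vee)_0}$, is the technical lynchpin that makes the finite sweep over characters $\chi_{ij}$ well-defined and independent of incidental choices.
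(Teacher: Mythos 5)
Your proposal is correct and follows essentially the same route as the paper: claim (1) via cell-constancy of $\AVC$, Noel--Jackson (or Binegar) to get the special cell $\W^\vee$-representation, and Springer to convert it to an orbit; claim (2) via Theorem \ref{thm_real_orbits} for $\Ss(\Oo^\vee)$, Theorem \ref{Aq_AV} for $\AVT$ of the cohomologically induced modules $\Rr_{\pc_i^\vee}(\chi_{ij})$, and cell-constancy to transfer to $\pi^\vee$; claim (3) by packaging these into an Atlas procedure. Your explicit flagging of the proposition that $\AVT(\Rr_{\qc^\vee}(\chi))$ depends only on $d\chi$ as the step that makes the finite sweep over characters well-posed is a helpful clarification of something the paper leaves implicit, but it does not change the argument.
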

\begin{proof}
We describe the two algorithms mentioned in the theorem. The algorithm to compute the complex associated variety is as follows:
\begin{enumerate}
\item Suppose $\pi^\vee\in\Bb^\vee$. By the decomposition of $\Bb^\vee$ into $HC$-Cells, there must be a cell $\Cc^\vee$, such that $\pi^\vee \in \Cc^\vee$. 
\item When $\GC$ is of classical type, we use the Noel-Jackson algorithm to compute the special Weyl group representation $\sigma(\Cc^\vee)$ attached to $\Cc^\vee$, this algorithm has been implemented in Atlas. If $\GC$ is of Exceptional type, there are tables for the special Weyl groups representations attached to cells, by Birne Binegar for example. 
\item We apply the Springer correspondence (again implemented in Atlas) to $\sigma(\Cc^\vee)$ to get the special nilpotent orbit attached to $\Cc^\vee$, by construction, this is exactly $\AVC(\Cc^\vee)$. 
\item Since the associated variety remains constant on the cell, we have hence computed $\AVC(\pi^\vee)$.
\end{enumerate}
When $\GC$ is of exceptional type, these computations have already been tabulated in literature. We mention that case here only for the sake of completeness.
\\

\begin{enumerate}
\item We compute $\AVC(\pi^\vee)$ using the previous algorithm, this will be a closure of a single nilpotent orbit. It is possible to check if this complex nilpotent orbit is even, if it is even, we have Condition 1 satisfied and denote this nilpotent orbit as $\Oo^\vee$.
\item Since $\Oo^\vee$ is even, we can compute $\Ss(\Oo^\vee)$ corresponding to the block $\Bb^\vee$ as in Theorem \ref{thm_real_orbits}.
\item Now, suppose $\pi^\vee$ is in a good cell, say $\Cc^\vee$. By definition of a good cell, $\Cc^\vee$ must contain a $\Rr_{\pc_i^\vee}(\chi_{ij})$ for a choice of $i$ and $j$. The theta-associated variety of $\Rr_{\pc_i^\vee}(\chi_{ij})$ and hence of $\Cc^\vee$ is the closure of a single theta-form parameterized by $\pc_i\in \Ss(\Oo^\vee)$. 
\end{enumerate}
Therefore if the good condition is satisfied, we can compute the theta-associated variety of $\pi^\vee$ as the closure of the theta-form corresponding to the parabolic $\pc_i \in \Ss(\Oo^\vee)$. This algorithm has been implemented in the Atlas software, so that if you input a representation into the software, we can check if the good condition holds, and if it does, we output the theta-associated variety in terms a parabolic corresponding to a theta-form of $\Oo^\vee$. 
\end{proof}

\section{Special Unipotent Packets for Real Reductive Groups}
We now return to the main goal of this paper, to compute unipotent Arthur packets in the ``good" case, and when things are not ``good", to provide definitive places to look for completing the packets. We provide an algorithm that explicitly computes Atlas/Langlands parameters of representations in these packets, and that has been implemented into the Atlas of Lie groups software.  

\subsection{Special Unipotent Parameters and Packets}
We will be in the Atlas Setting of Definition \ref{AtlasSetting}. Thas is:
\\
Let $(\GC, \gamma)$ be a basic data and $(\GC^\vee, \gamma^\vee)$ be the corresponding dual basic data. Let $(\BC, \HC, \{X_\as\})$ be a fixed pinning for $\GC$. Let $\xi$ be a strong real form of $\GC$ in the inner class of $\gamma$ and let $\eta^\vee$ be a strong real form for $\GC^\vee$ in the dual inner class given by $\gamma^\vee$. Corresponding to $\xi$ and $\eta^\vee$, let $\theta_\xi = \text{Int}(\xi)$, and $\theta_{\eta^\vee} = \text{Int}(\eta^\vee)$ be Cartan involutions of $\GC$ and $\GC^\vee$ respectively with maximal complex subgroups $\KC_\xi$ and $\KC^\vee_{\eta^\vee}$ respectively. 
\\
Let $\delta$ be a regular integral infinitesimal character for $\GC$ and let $\Bb(\xi, \eta^\vee, \delta)$ be the block of irreducible $(\gc, \KC_\xi)$-modules at infinitesimal character $\delta$ specified by the pair of strong real forms $(\xi, \eta^\vee)$. Also, $\Bb^\vee = \Bb(\eta^\vee, \xi,  \delta^\vee)$ is the corresponding dual block of irreducible $(\gc^\vee, \KC_{\eta^\vee}^\vee)$-modules.
\\
\begin{defn}[Block at Singular infinitesimal character]
Suppose $\lambda \in \delta + X^*(\HC)$, then by a block at (possibly singular infinitesimal character) $\lambda$ we will mean the translation of the block at regular integral infinitesimal character at $\delta$ to the infinitesimal character $\lambda$, that is 
\begin{equation}
\Bb(\lambda) = \Bb(\xi, \eta^\vee,  \lambda) := T_{\delta}^\lambda (\Bb(\xi, \eta^\vee,  \delta)).
\end{equation}
\end{defn}
Note that $\Bb(\lambda)$, depends on the choice of a regular integral $\delta\in X^*(\HC)$ such that $\lambda\in \delta+X^*(\HC)$. It is a property of translation functors that the block $\Bb(\lambda)$ does not depend on the choice of $\delta$. 
\\

Fix a unipotent Arthur parameter $\psi$, and let $\phi_\psi$ be the corresponding Langlands parameter with datat $(y, \lambda)$. We recall that the pair $(y, \lambda)$ satisfies:
\begin{enumerate}
\item Let $\psi_0$ be the tempered Langlands parameter corresponding to the restriction of $\psi$ to $\RWG$. Let $(y_0, \lambda_0)$ be the data corresponding to the parameter $\psi_0$. 
\item Let $\psi_1$ be the restriction of $\psi$ to $\SL$. Define:
$$ y_1 = \psi \begin{pmatrix} i &0 \\ 0 &-i \end{pmatrix}, \quad \lambda_1 = d\psi_1 \begin{pmatrix} 1/2 &0\\0&-1/2\end{pmatrix}.$$
Then the Langlands parameter  $\phi_\psi$ corresponding to $\psi$ is given by $(y, \lambda)$ where, $y = y_0y_1$ and $\lambda = \lambda_0+\lambda_1$. 
\end{enumerate}

Recall that $y$ must satisfy: $y^2 = \text{exp}(2\pi i\lambda)$.  We can attach the nilpotent element  $E_\psi := d\psi_1 \begin{pmatrix}0& 1 \\ 0&0\end{pmatrix}$ to $\psi$ . The element $E_\psi \in \nc(\lambda)^\vee\cap \ssc^\vee$. Let $\Oo^\vee = \GC^\vee\cdot E_\psi$ , then $\Oo^\vee$ is a nilpotent orbit for $\GC^\vee$ if and only if $\lambda$ is integral, and in this case $\Oo^\vee$ is even. Let  $\{X^\vee, H^\vee, Y^\vee\}$ be the Jacobson-Morozov triple corresponding to $\Oo^\vee$, and now define
\begin{equation}
\lambda(\Oo^\vee) := \frac{1}{2} H^\vee.
\end{equation}

\begin{defn}[Weak Unipotent Arthur Packet]
Let $\Oo^\vee$ be a dual even complex nilpotend orbit. Choose $\delta$ such that $\lambda(\Oo^\vee)\in X^*(\HC)$. The weak unipotent packet corresponding to the triple $(\xi, \eta^\vee, \Oo^\vee)$ is the set 
\begin{equation}
\Pi_{\text{weak}}^u(\xi, \eta^\vee, \Oo^\vee) := \{ \pi \in \Bb(\lambda(\Oo^\vee)):= T_\delta^{\lambda(\Oo^\vee)}(\Bb(\delta))\;|\; \AVC(\pi^\vee) = \overline{\Oo^\vee}\}. 
\end{equation}
\end{defn}

An easy consequence of the definition weak unipotent packets is the fact that two weak unipotent Arthur packets are either equal or disjoint.
\\

We can construct the parabolic subalgebra $\pc^\vee = \lc(\lambda)^\vee+\nc(\lambda)^\vee$ and define $\Pp^\vee$ to be the conjugacy class of parabolic subalgebras conjugate to $\pc^\vee$. In this setting $E_\psi\in \Zz_{\Pp^\vee}$, the Richardson orbit corresponding to $\Pp^\vee$. Since $\Oo^\vee$ is even, $\Zz_{\Pp^\vee} = \Oo^\vee$. This implies that $E_\psi \in \nc(\lambda)^\vee\cap\ssc^\vee\cap \Oo^\vee$, as a result, we can find a $\KC^\vee$ orbit on $\ssc^\vee$  (this is exactly $\KC^\vee\cdot E_\psi$) corresponding to the Arthur parameter $\psi$, call this orbit $\Oo^\vee_{\KC^\vee}$. The map 
\begin{equation}
\psi \mapsto \KC^\vee\cdot E_\psi,
\end{equation}
defines a bijection between unipotent Arthur packets corresponding to unipotent Arthur parameters supported on $X_j(\Oo_\lambda^\vee, \EG)$ and the theta real forms of $\Oo^\vee$ in the block $\Bb^\vee(\lambda(\Oo^\vee))$. Note that $\GC^\vee \cdot E_\psi = \Oo^\vee$. We now define a special unipotent Arthur packet.

\begin{defn}[Special Unipotent Arthur Packet] 
The special unipotent Arthur packet corresponding to the tuple $(\xi, \eta^\vee, \Oo^\vee_{\KC^\vee})$ is the set 
\begin{equation}
\Pi^u(\xi, \eta^\vee, \Oo_{\KC^\vee}^\vee) := \{ \pi \in\Pi_{\text{weak}}^u(\xi, \eta^\vee, \Oo^\vee) \;|\; \overline{\Oo_{\KC^\vee}^\vee}\subset \AVT(\pi^\vee)\}. 
\end{equation}
\end{defn}
The theta associated variety of an irreducible representation need not necesarily be the closure of a single orbit, as a result, we can only hope for an inclusion of $\Oo^\vee_{\KC^\vee}$ inside $\AVT(\pi^\vee)$ as a result two distinct unipotent Arthur packets need not necessarilly be disjoint.

\subsection{Computing Special Unipotent Packets}
Continuing with the definitions of unipotent packets, we now proceed to compute them. Even though the packets are explicitly defined, the computation of it's actual contents is difficult. The most difficult step in this process is the computation of the invariants $\AVC(\pi)$ and $\AVT(\pi)$. 
\\
There is no algorithm that computes the contents of the special unipotent packets. The results on the computation of the invariants $\AVC(\pi)$ and $\AVT(\pi)$ (Theorem \ref{thm_real_ass_var}) in the earlier sections provide us the tools to explicitly compute these packets. As a result, we now show how to completely compute weak unipotent Arthur packets. The algorithm to compute special unipotent Arthur packets, does not provide complete answers in all cases, however, when the packets remain incomplete, we can provide an exact list of representations that could possibly complete these packets. We use the Atlas software to implement these algorithms to get explicit Langlands parameters for representations in the given unipotent packet. 
\\

Fix a dual even complex nilpotent orbit $\Oo^\vee$. Let $\lambda:=\lambda(\Oo^\vee)=\frac{1}{2}H^\vee$, where $H^\vee$ is the semisimple element in the Jacobson-Morozov triple. We want to compute 
\begin{equation}
\Pi_{\text{weak}}^u(\xi, \eta^\vee, \Oo^\vee) := \{ \pi \in \Bb(\lambda)\;|\; \AVC(\pi^\vee) = \overline{\Oo^\vee}\}. 
\end{equation}

Fix a regular and integral $\delta\in X^*(\HC)$, such that $\lambda\in \delta + X^*(\HC)$. Let $\Bb(\xi, \eta^\vee, \delta)$ be a block of irreducible $(\gc, \KC)$-modules at regular integral infnitesimal character $\delta$, note that $\eta^\vee$ satisfies $(\eta^\vee)^2 = e^{2\pi i \lambda}$. Let $\Bb^\vee$ be the dual block, a block of irreducible $(\gc^\vee, \KC^\vee)$-modules at infnitesimal character $\delta^\vee$. Using Vogan duality, we note that there is a bijection between $\Bb(\gamma)$ and $\Bb^\vee(\gamma^\vee)$. If $\pi\in \Bb(\gamma)$, it's Vogan dual will be denoted as $\pi^\vee \in \Bb^\vee(\gamma^\vee)$.

This leads us to the first main result of this section:
\begin{theorem}
Let $\Oo^\vee$ be an even nilpotent orbit in $\gc^\vee$. Suppose we are in the setting described above, then $\Pi_{\text{weak}}^u(\xi, \eta^\vee, \Oo^\vee)$ can be completely and explicitly computed. This algorithm can be implemented into the Atlas Software.
\end{theorem}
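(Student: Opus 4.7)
The plan is to reduce everything to a regular integral infinitesimal character, where blocks, Vogan duality, cell decompositions and $\AVC$ are directly computable in Atlas, and then translate the selected set back to $\lambda(\Oo^\vee)$.

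First I would choose $\delta\in X^*(\HC)$ regular and integral with $\lambda(\Oo^\vee)\in\delta+X^*(\HC)$; such a $\delta$ exists because $\Oo^\vee$ is even, so $\lambda(\Oo^\vee)=\tfrac{1}{2}H^\vee\in P^\vee(\GC^\vee)$ is integral. Using Atlas I enumerate the regular block $\Bb(\xi,\eta^\vee,\delta)$ together with its Vogan-dual block $\Bb^\vee(\eta^\vee,\xi,\delta^\vee)$ and the explicit bijection $\pi\leftrightarrow\pi^\vee$ supplied by Vogan duality.

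Second, for each $\pi^\vee\in\Bb^\vee$ I would compute $\AVC(\pi^\vee)$ by the algorithm in part (1) of Theorem \ref{thm_real_ass_var}: decompose $\Bb^\vee$ into Harish--Chandra cells under the coherent continuation action of $\W^\vee$; apply the Noel--Jackson algorithm (or Binegar's tables in exceptional type) to read off the special $\W^\vee$-representation $\sigma(\Cc^\vee)$ attached to each cell $\Cc^\vee$; and apply the Springer correspondence to extract the complex nilpotent orbit whose closure equals $\AVC(\Cc^\vee)$. By Proposition \ref{AV_of_Cell}, this labels every representation in $\Bb^\vee$ by a well-defined orbit. Transporting the answer across Vogan duality, I isolate the finite subset
\[
\Bb(\delta)_{\Oo^\vee} := \{\pi\in\Bb(\delta) \mid \AVC(\pi^\vee)=\overline{\Oo^\vee}\}.
\]

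Third, I would apply the translation functor $T_\delta^{\lambda(\Oo^\vee)}$ to $\Bb(\delta)_{\Oo^\vee}$. Because $\AVC$ is constant on coherent families, it is preserved by $T_\delta^{\lambda(\Oo^\vee)}$ whenever the translate is nonzero, so the image is contained in $\Pi^u_{\text{weak}}(\xi,\eta^\vee,\Oo^\vee)$; conversely, every $\pi\in\Bb(\lambda(\Oo^\vee))=T_\delta^{\lambda(\Oo^\vee)}(\Bb(\delta))$ with $\AVC(\pi^\vee)=\overline{\Oo^\vee}$ is the translate of some element of $\Bb(\delta)_{\Oo^\vee}$, again by the coherent-family invariance of $\AVC$. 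Discarding the parameters whose translates vanish at $\lambda(\Oo^\vee)$ leaves exactly $\Pi^u_{\text{weak}}(\xi,\eta^\vee,\Oo^\vee)$. Since $\Bb(\delta)$ is finite, the whole procedure terminates in finitely many steps.

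The hard part will be the singular-translation bookkeeping in the last step: because $\lambda(\Oo^\vee)$ is generically singular on some coroots, $T_\delta^{\lambda(\Oo^\vee)}$ annihilates certain irreducibles and can identify others, so the algorithm must implement $T_\delta^{\lambda(\Oo^\vee)}$ on Atlas parameters using tau-invariant data and reliably detect the vanishing translates. All remaining ingredients (block construction, Vogan duality, cell decomposition, Noel--Jackson, Springer correspondence, and translation functors) already exist as Atlas primitives, so once the singular-translation routine is in place the full algorithm is a finite composition of existing subroutines.
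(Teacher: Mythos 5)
Your proposal is essentially the paper's own proof: decompose the regular dual block $\Bb^\vee(\delta^\vee)$ into $HC$-cells, compute $\AVC$ per cell via Noel--Jackson (or Binegar tables) plus the Springer correspondence, select the cells with the right complex associated variety, pass across Vogan duality, and translate to $\lambda(\Oo^\vee)$. The only added value in your write-up is that you explicitly flag the singular-translation bookkeeping (vanishing and coincidence of translates), which the paper leaves implicit in its definition $\Bb(\lambda):=T_\delta^{\lambda}(\Bb(\delta))$; this is a correct and welcome refinement rather than a different route.
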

\begin{proof}
We will prove this result in a series of steps as follows:
\begin{enumerate}
\item Recall that the dual block is a disjoint union of $HC$-cells
$$ \mathcal{B}^\vee(\delta^\vee) = \coprod \Cc^\vee.$$  
\item Given a $\W^\vee$-cell $\Cc^\vee$ and  for any $\pi^\vee \in \Cc^\vee$, by Theorem \ref{thm_real_ass_var} part (a), we know how to compute  $\AVC(\pi^\vee)$. Since the associated variety remains constant on $\Cc^\vee$, this lets us compute $\AVC(\Cc^\vee)$. 
\item Let $\CC^\vee(\Oo^\vee)$ be the set of all cells $\Cc^\vee$ satisfying $\AVC(\Cc^\vee) = \overline{\Oo^\vee}$. For every cell $\Cc^\vee$, we use Vogan-duality to compute the dual cell $\Cc$ and put this cell into the set $\CC(\Oo^\vee)$, so that $\Cc(\Oo^\vee)$ is set of $HC$-cells for $\Bb(\delta)$ such that the dual cell $\Cc^\vee$ has complex associated variety $\overline{\Oo^\vee}$.
\item The representations in the cells $\Cc\in \CC(\Oo^\vee)$ all have dual complex associated variety $\overline{\Oo^\vee}$, and, have infinitesimal character $\delta$. To get representations at infinitesimal character $\lambda:=\lambda(\Oo^\vee)$, we apply the translation functor $T_\delta^\lambda$. Since we chose $\delta$ such that $\lambda\in \delta+X^*(\HC)$, the application of the translation functor is valid.
\item Therefore, the computation of the weak unipotent Arthur packets is the set: 
\begin{equation}
\Pi_{\text{weak}}^u(\xi, \eta^\vee, \Oo^\vee) = \coprod_{\Cc\in \CC(\Oo^\vee)} T_\delta^\lambda(\Cc).
\end{equation}
\end{enumerate}
To implement this algorithm in the Atlas software, in addition to using in built functions (for induction and translation functors), we have:
\begin{enumerate}
\item implemented the algorithm to compute $H^\vee$, the semisimple element in the Jacobson-Morozov triple corresponding to $\Oo^\vee$.
\item implemented the Noel-Jackson algorithm to compute the special Weyl group representation when $\GC$ is of classical type. In the case when $\GC$ is of exceptional type we hard code the special nilpotent orbit attached to a cell. 
\item implemented the Springer correspondence to compute the special nilpotent orbit given the special $\W$-representation, when $\GC$ is of classical type. 
\item implemented Vogan-duality to compute a dual cell.  
\item each of these functions have combined so that if one inputs the pair $(\Bb, \Oo^\vee)$ we output the set $\Pi_{\text{weak}}^u(\xi, \eta^\vee, \Oo^\vee)$ in terms of explicit Langlands parameters. 
\end{enumerate} 
\end{proof}

We move to computing special unipotent Arthur packets. Let $\Oo^\vee$ be a fixed dual even complex nilpotent orbit for $\GC$. Let $\lambda:=\lambda(\Oo^\vee)$ be the infinitesimal character attached to $\Oo^\vee$. Let $\xi$ be a strong real form of $\GC$. Choose $\delta\in X^*(\HC)$ so that $\lambda\in \delta+X^*(\HC)$. Let $\eta^\vee$ be a strong real form for $\GC^\vee$ such that $(\eta^\vee)^2=e^{2\pi i \lambda}$ and $\eta^\vee$ is in the dual quasisplit inner class for $\GC$. Let $\Bb(\xi, \eta^\vee, \delta)$ be a block for the strong real form $\xi$ of $\GC$ at infinitesimal character $\delta$. Let $\Bb^\vee(\delta^\vee)$ be the corresponding dual block. 
\\

Given the complex nilpotent orbit $\Oo^\vee$, we have a set of $\KC^\vee$-conjugacy classes of $\theta^\vee$-stable ($\theta^\vee=\text{Int}(\eta^\vee)$) parabolic subgalgebras in $\gc^\vee$ parameterizing the theta forms of $\Oo^\vee$ in the block $\Bb^\vee(\delta^\vee)$, denoted as $\Ss(\Oo^\vee)$ and computed in Theorem \ref{thm_real_orbits}.
\\
Let $\Ss(\Oo^\vee) = \{\pc^\vee_1, \pc_2^\vee, \dots, \pc_r^\vee\}$ and suppose for $i=1, 2, \dots, r$, let $\pc_i^\vee = \lc^\vee_i+\nc^\vee_i$ be the Langlands decomposition. Let $\{\Oo^\vee_{\KC^\vee,1}, \Oo^\vee_{\KC^\vee, 2}, \dots, \Oo^\vee_{\KC^\vee,r}\}$ be the set of theta real forms of $\Oo^\vee$ corresponding to the ordered set $\Ss(\Oo^\vee)$.  
\\

For a fixed $i$, let $s_i$ be the number of connected components of the real Levi subgroup, $\LC^\vee_i$, corresponding to $\lc_i$. For $j=1, 2, \dots, s_i$, let $\chi_{ij}$ be a character on $\LC^\vee_i$ such that $d\chi_{ij} = \delta_{\LC^\vee}^\vee$ for all $j=1, 2, \dots, s_i$. 
\\
Corresponding to each $\chi_{ij}$, let $\Cc^\vee_{i,j}$ be the $HC$-cell in $\Bb^\vee(\delta^\vee)$ containing $\Rr_{\pc_i^\vee}(\chi_{ij})$. Following the proof of Theorem \ref{thm_real_ass_var}, recall that for a fixed $i$, $\AVT(\Cc^\vee_{ij}) = \overline{\Oo^\vee_{\KC^\vee, i}}$. We define 
\begin{equation}
\CC^\vee(\Oo^\vee_{\KC^\vee, i}) := \{\Cc^\vee_{ij}\;| \; j=1, 2, \dots s_i\} \subset \CC^\vee(\Oo^\vee).
\end{equation}
Note that every cell $\Cc^\vee \in \CC^\vee(\Oo^\vee_{\KC^\vee, i})$ satisfies $\AVT(\Cc^\vee) = \overline {\Oo^\vee_{\KC^\vee, i}}$. Let 
\begin{equation}
\CC(\Oo^\vee_{\KC^\vee, i}) := \{\text{the dual cell of $\Cc^\vee$, for every $\Cc^\vee\in\CC^\vee(\Oo^\vee_{\KC^\vee, i})$} \} \subset \CC(\Oo^\vee).
\end{equation}
Now, let 
\begin{equation}
\Pi^u_{\text{icp}}(\xi, \eta^\vee, \Oo_{\KC^\vee, i}^\vee) := \coprod_{\Cc\in \CC(\Oo^\vee_{\KC^\vee, i}) } T_\delta^\lambda(\Cc).
\end{equation}
Note that $\Pi^u_{\text{icp}}(\xi, \eta^\vee, \Oo_{\KC^\vee}^\vee) \subset \Pi^u(\xi, \eta^\vee, \Oo_{\KC^\vee}^\vee)$. Every representation $\pi\in\Pi^u_{\text{icp}}(\xi, \eta^\vee, \Oo_{\KC^\vee}^\vee)$ is such that $\AVT(\pi^\vee)$ is the closure of a single theta form $\Oo^\vee_{\KC^\vee, i}$ of $\Oo^\vee$, and hence irreducible. 
\\
It can be the case that there is a representation $\pi\in \Bb(\lambda)$ such that $\AVT(\pi^\vee)$ is reducible and that $\Oo^\vee_{\KC^\vee, i}$ is just one of the components, then $\pi$ must belong to $\Pi^u(\xi, \eta^\vee, \Oo_{\KC^\vee, i}^\vee)$, however such a $\pi$ cannot belong to $\Pi^u_{\text{icp}}(\xi, \eta^\vee, \Oo_{\KC^\vee, i}^\vee)$. That is why we use the subscript ``icp" which stands for ``incomplete packet". 
\\

Now, recall that $\CC(\Oo^\vee)$ is the set of all cells $\Cc\in \Bb(\delta)$ such that $\AVC(\Cc^\vee) =\overline{\Oo^\vee}$. Let 
\begin{equation}
\CC_{\KC^\vee}(\Oo^\vee) = \coprod_{i=1}^r \CC(\Oo^\vee_{\KC^\vee, i}) \subset \CC(\Oo^\vee).
\end{equation}

\begin{defn}[Good Condition for Unipotence]
We will say that the good condition for unipotence is satisfied if $\CC(\Oo^\vee) = \CC_{\KC^\vee}(\Oo^\vee)$. 
\end{defn}
When the good condition for unipotence is satisfied, all the unipotent Arthur packets $\Pi^u_{\text{icp}}(\xi, \eta^\vee, \Oo_{\KC^\vee, i}^\vee) = \Pi^u(\xi, \eta^\vee, \Oo_{\KC^\vee, i}^\vee)$ for all $i=1, 2, \dots, r$. Furthermore, in this case, two unipotent Arthur packets are either disjoint or equal. 
\\

There are cases when the good condition for unipotence is not satisfied. This mostly has to do with the fact that there is no clear understanding about the real associated variety of some $HC$-cell $\Cc^\vee$ in $\Bb^\vee$, in this case $\AVT(\pi^\vee)$ is likely reducible or if it is irreducible $\pi^\vee$ does not belong to any of cells $\Cc^\vee \in \CC^\vee(\Oo{\KC^\vee, i}^\vee)$ for any $i$. 
Let 
\begin{equation}
\Cc_{\text{mis}}(\Oo^\vee) = \CC(\Oo^\vee) - \CC_{\KC^\vee}(\Oo^\vee).
\end{equation}
Then testing for the good condition for unipotence is equivalent to checking if $\CC_{\text{mis}}(\Oo^\vee)$ is empty. Finally let 

\begin{equation}
\Pi^u_{\text{mis}}(\xi, \eta^\vee, \Oo^\vee) := \coprod_{\Cc\in \CC_{\text{mis}}(\Oo^\vee) }T_\gamma^\lambda(\Cc) \subset \Pi^u_{\text{weak}}(\xi, \eta^\vee, \Oo^\vee) .
\end{equation}
The set $\Pi^u_{\text{mis}}(\xi, \eta^\vee, \Oo^\vee)$ is exactly the set of representations, a subset of which when added to $\Pi^u_{\text{ic}}(\xi, \eta^\vee, \Oo_{\KC^\vee, i}^\vee)$, one gets the complete unipotent Arthur packet $\Pi^u(\xi, \eta^\vee, \Oo_{\KC^\vee, i}^\vee)$. For this reason we use the subscript ``mis" which stands for ``missing representations". It is not immediately clear what subset of $\Pi^u_{\text{miss}}(\xi, \eta^\vee, \Oo^\vee)$ can be added to $\Pi^u_{\text{ic}}(\xi, \eta^\vee, \Oo_{\KC^\vee, i}^\vee)$ to get a complete unipotent Arthur packet. In ongoing work with Jeffrey Adams, we explore some ideas about stable characters to achieve this completion in some cases. We summarize the above discussion in the following theorem 

\begin{theorem}\label{thm_main_thm}
Let $\Oo^\vee$ be an even nilpotent orbit in $\gc^\vee$. Let $\xi$ be a strong real form of $\GC$ and let $\eta^\vee$ a strong real form of $\GC^\vee$, and $\delta$ a integral regular infinitesimal character for $\GC$ be chosen such that $(\eta^\vee)^2 = \text{exp}(2\pi i\lambda(\Oo^\vee))$ and $\lambda(\Oo^\vee) \in \gamma+X^*(\HC)$. Let $\Bb(\delta):=\Bb(\xi, \eta^\vee, \delta)$ be the block of $(\gc, \KC_\xi)$-modules and $\Bb^\vee(\delta^\vee)$ the corresponding dual block. Let $\{\Oo^\vee_{\KC^\vee,1}, \Oo^\vee_{\KC^\vee, 2}, \dots, \Oo^\vee_{\KC^\vee,r}\}$ be the theta real forms of $\Oo^\vee$ in the block $\Bb^\vee(\delta^\vee)$. 
\begin{enumerate}
\item Suppose the good condition for unipotence is satisfied, then $\Pi^u (\xi,\eta^\vee, \Oo^\vee_{\KC^\vee, i})$ can be computed for all $i=1, 2, \dots r$. This computation can be implemented in Atlas to compute the explicit Langlands parameters of representations in these complete unipotent packets. 
\item Suppose the good condition for unipotence is not satisfied, then for each $i=1, 2, \dots, r$, we can compute a set 
\begin{equation*}
\Pi^u_{\text{icp}} (\xi, \eta^\vee, \Oo_{\KC^\vee, i}^\vee) \subset \Pi^u (\xi, \eta^\vee,  \Oo_{\KC^\vee, i}^\vee),
\end{equation*}
 and, a set 
 \begin{equation*}
 \Pi^u_{\text{mis}} (\xi, \eta^\vee, \Oo^\vee) \subset \Pi^u_{\text{weak}} (\xi, \eta^\vee, \Oo^\vee) 
\end{equation*}
such that 
\begin{equation*}
\Pi^u(\xi, \eta^\vee \Oo_{\KC^\vee, i}^\vee) - \Pi^u_{\text{icp}} (\xi, \eta^\vee, \Oo_{\KC^\vee}^\vee) \subset  \Pi^u_{\text{mis}} (\xi,\eta^\vee, \Oo^\vee),
\end{equation*}
for each $i=1, 2, \dots, r$. 
For $i=1, 2, \dots, r$, we have the following inclusions:
\begin{equation*}
 \Pi^u_{\text{icp}}(\Oo_{\KC^\vee, i}^\vee)  \subset \Pi^u(\Oo_{\KC^\vee, i}^\vee) \subset \Pi^u_{\text{ic}}(\Oo_{\KC^\vee, i}^\vee) \bigsqcup  \Pi^u_{\text{mis}} (\Oo^\vee) \subset \Pi^u_{\text{weak}} (\Oo_\mathbb{R}^\vee),
\end{equation*}
such that, except $\Pi^u(\xi, \eta^\vee, \Oo_{\KC^\vee,i}^\vee)$, all the other sets are completely and explicitly computable in Atlas. 
\end{enumerate} 
\end{theorem}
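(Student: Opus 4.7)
The plan is to assemble the algorithms from Theorems~\ref{thm_real_orbits} and \ref{thm_real_ass_var} into a single procedure that reconstructs both the weak and special unipotent Arthur packets from the dual cell decomposition of $\Bb^\vee(\delta^\vee)$, then verify the claimed inclusions by set-theoretic bookkeeping. First I would fix $\Oo^\vee$ with neutral element $H^\vee$ and set $\lambda:=\lambda(\Oo^\vee)=\tfrac{1}{2}H^\vee$, and invoke Theorem~\ref{thm_real_orbits} to produce $\Ss(\Oo^\vee)=\{\pc_1^\vee,\ldots,\pc_r^\vee\}$ parameterizing the theta forms $\Oo^\vee_{\KC^\vee,i}$ via the map $\Xi$ constructed in its proof. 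For each $i$, I would enumerate representatives $\{\chi_{ij}\}_{j=1}^{s_i}$ of one-dimensional $(\lc_i^\vee,\LC_i^\vee\cap\KC^\vee)$-modules with $d\chi_{ij}=\delta^\vee_{\LC_i^\vee}$, form the cohomologically induced $(\gc^\vee,\KC^\vee)$-modules $\Rr_{\pc_i^\vee}(\chi_{ij})$, and locate the Harish-Chandra cells $\Cc_{ij}^\vee\subset \Bb^\vee(\delta^\vee)$ containing them. By Theorem~\ref{Aq_AV} these cells satisfy $\AVT(\Cc_{ij}^\vee)=\overline{\Oo^\vee_{\KC^\vee,i}}$ and hence $\AVC(\Cc_{ij}^\vee)=\overline{\Oo^\vee}$ by the relationship between complex and theta associated varieties.

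Next, I would transfer from $\Bb^\vee(\delta^\vee)$ to the original block $\Bb(\delta)$ via Vogan duality, taking each $\Cc_{ij}^\vee$ to its dual cell $\Cc_{ij}\subset\Bb(\delta)$, and then apply the translation functor $T_\delta^\lambda$; this is legitimate because $\lambda\in\delta+X^*(\HC)$. Since $\AVC$ and $\AVT$ are constant on coherent families and preserved by translation functors (Proposition~\ref{AV_of_Cell} together with the translation-invariance proposition), the associated variety data of all representations in $T_\delta^\lambda(\Cc_{ij})$ continue to satisfy the required conditions. By construction, $\Pi^u_{\text{icp}}(\xi,\eta^\vee,\Oo^\vee_{\KC^\vee,i})$ is precisely the disjoint union of these translated cells, and the inclusion $\Pi^u_{\text{icp}}\subseteq\Pi^u$ follows because each $\pi$ in this set has $\AVT(\pi^\vee)=\overline{\Oo^\vee_{\KC^\vee,i}}$, which contains $\Oo^\vee_{\KC^\vee,i}$ as required by the definition of the special unipotent packet.

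For part (1), the good condition for unipotence asserts $\CC(\Oo^\vee)=\CC_{\KC^\vee}(\Oo^\vee)$, so every dual cell contributing to $\Pi^u_{\text{weak}}$ already lies in some $\CC(\Oo^\vee_{\KC^\vee,i})$. This forces
\begin{equation*}
\Pi^u(\xi,\eta^\vee,\Oo^\vee_{\KC^\vee,i})=\Pi^u_{\text{icp}}(\xi,\eta^\vee,\Oo^\vee_{\KC^\vee,i})=\coprod_{\Cc\in\CC(\Oo^\vee_{\KC^\vee,i})}T_\delta^\lambda(\Cc),
\end{equation*}
giving explicit Langlands parameters, and two packets with distinct indices are automatically disjoint because their defining theta forms are distinct $\KC^\vee$-orbits. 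For part (2), the inclusion chain follows by tracking membership: any $\pi\in\Pi^u(\xi,\eta^\vee,\Oo^\vee_{\KC^\vee,i})\setminus\Pi^u_{\text{icp}}(\xi,\eta^\vee,\Oo^\vee_{\KC^\vee,i})$ satisfies $\AVC(\pi^\vee)=\overline{\Oo^\vee}$, so $\pi\in T_\delta^\lambda(\Cc)$ for some $\Cc\in\CC(\Oo^\vee)$; since $\pi$ is not captured by any $\CC(\Oo^\vee_{\KC^\vee,k})$, the cell $\Cc$ lies in $\CC_{\text{mis}}(\Oo^\vee)=\CC(\Oo^\vee)\setminus\CC_{\KC^\vee}(\Oo^\vee)$, so $\pi\in\Pi^u_{\text{mis}}(\xi,\eta^\vee,\Oo^\vee)$. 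The remaining inclusion $\Pi^u_{\text{mis}}\subseteq\Pi^u_{\text{weak}}$ is immediate from the definition.

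The main obstacle will be the cell-locating step: given the algebraically produced module $\Rr_{\pc_i^\vee}(\chi_{ij})$, extracting its Atlas parameter and identifying the Harish-Chandra cell of $\Bb^\vee(\delta^\vee)$ containing it requires composing cohomological induction, parameter recognition, and the block-decomposition machinery already implemented in Atlas. A secondary obstacle is that when $\GC$ is of exceptional type one must substitute Binegar's tables for the Noel-Jackson algorithm at the step of attaching the special Weyl group representation to a cell; this is a computational rather than conceptual obstruction, but it limits the range of cases in which the procedure is fully automated. The present theorem is essentially a packaging of Theorems~\ref{thm_real_orbits} and \ref{thm_real_ass_var} with Vogan duality and translation functors, so once these subroutines are in place the proof reduces to checking the inclusions above.
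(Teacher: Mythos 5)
Your proposal is essentially the paper's own argument: the theorem is stated explicitly as a summary of the preceding discussion in Section~7.2, and your proposal faithfully reconstructs that discussion — invoke Theorem~\ref{thm_real_orbits} to get $\Ss(\Oo^\vee)$, form the cohomologically induced modules, locate their cells, pass through Vogan duality and $T_\delta^\lambda$, and then verify the inclusions by tracking cell membership. One small step you pass over quickly is worth making explicit: in part~(2), to conclude that a $\pi\in\Pi^u(\xi,\eta^\vee,\Oo^\vee_{\KC^\vee,i})\setminus\Pi^u_{\text{icp}}(\xi,\eta^\vee,\Oo^\vee_{\KC^\vee,i})$ comes from a cell in $\CC_{\text{mis}}(\Oo^\vee)$ rather than from some $\CC(\Oo^\vee_{\KC^\vee,k})$ with $k\neq i$, one should note that distinct theta forms of $\Oo^\vee$ are $\KC^\vee$-orbits of equal dimension, so neither closure contains the other; hence $\AVT(\pi^\vee)=\overline{\Oo^\vee_{\KC^\vee,k}}$ would violate $\overline{\Oo^\vee_{\KC^\vee,i}}\subset\AVT(\pi^\vee)$. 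With that observation supplied, the inclusion chain closes exactly as you describe.
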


\section{An Application}
Recall that Theorem \ref{thm_real_ass_var} computes the real associated variety only when the `good condition' given in Definition \ref{defn_good_condition} is satisfied. 
\\
Here are two possiblities of how the good condition might fail to be true:
\begin{enumerate}
\item The cell $\Cc^\vee$ contains a $\Rr_\qc(\lambda)$, but $\qc$ is not conjugate to any of the parabolics in $\Ss(\Oo^\vee)$. 
\item The cell $\Cc^\vee$ does not contain a cohomologically induced module of the type $\Rr_\qc(\lambda)$ for any choice of theta-stable data $(\qc, \lambda)$. 
\end{enumerate}
In case of (1), we know that the associated variety is definitely irreducible and therefore has to be one of the theta-forms corresponding to a parabolic in $\Ss(\Oo^\vee)$. It is possible that such a scenario does not arise, but at this point we don't know how to prove otherwise. 
\\

In case of (2), we will use Theorem \ref{thm_main_thm} to try to figure out the the associated variety. The main tool in this application is the stable sum formula for unipotent packets which we now state:

\begin{theorem}[Theorem 22.7, \cite{\ABV}]
Suppose we are in setting of previous section, that is the Atlas Setting, let $\Oo^\vee$ be a dual nilpotent orbit and let $\Oo^\vee_{\KC^\vee, i}$ be one of its theta-forms. Let $\Pi^u(\xi, \eta^\vee, \Oo_{\KC^\vee, i}^\vee) := \Pi_u(\Oo^\vee_{\KC, i})$. Then corresponding to $\Oo^\vee_{\KC, i}$, is a strongly stable virtual character given by
\begin{equation}
\eta(\Oo^\vee_{\KC, i}) = \sum_{\pi \in \Pi_u(\Oo^\vee_{\KC, i})} a(\pi)\pi,
\end{equation}
the coefficients $a(\pi)$ are explicitly determined, and are non-zero.
\end{theorem}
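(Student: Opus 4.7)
The plan is to follow the microlocal geometric approach of Adams-Barbasch-Vogan, in which stable virtual characters attached to orbits on Langlands parameter spaces arise as characteristic cycles of constructible sheaves. First I would set up the parameter space $X(\EG, \lambda)$ of Langlands parameters at infinitesimal character $\lambda=\lambda(\Oo^\vee)$, together with its stratification by $\GC^\vee$-orbits. Via the Kazhdan-Lusztig-Vogan correspondence each $\pi$ in the block $\Bb(\lambda)$ matches an irreducible $\GC^\vee$-equivariant perverse sheaf $P(\pi^\vee)$ on $X(\EG, \lambda)$ supported on a single orbit closure, and the dictionary sends sums of irreducibles in the Grothendieck group to sums of characteristic cycles in the cotangent bundle.

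Second, I would invoke the microlocal definition of the Arthur packet: a representation $\pi$ lies in $\Pi_u(\Oo^\vee_{\KC, i})$ precisely when the characteristic cycle $CC(P(\pi^\vee))$ contains the conormal variety to $\Oo^\vee_{\KC, i}$ with positive multiplicity $m(\pi)$. I would then set
\begin{equation*}
a(\pi) := (-1)^{d(\pi)}\, m(\pi),
\end{equation*}
where $d(\pi)$ is the integral length of $\pi$ (an explicit combinatorial invariant on the block). Non-vanishing is built into the construction, since membership in $\Pi_u(\Oo^\vee_{\KC, i})$ forces $m(\pi) > 0$; explicit computability in the Atlas setting reduces to reading the microlocal multiplicities off the Kazhdan-Lusztig-Vogan polynomials attached to the block, which are already computable by \textsc{atlas}.

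Third, to establish stability I would identify $\eta(\Oo^\vee_{\KC, i})$ with the image under the Kashiwara-Dubson characteristic-cycle-to-character map of the intersection cohomology extension of the constant sheaf on $\overline{\Oo^\vee_{\KC, i}}$. Expanding this sheaf in the irreducible equivariant perverse basis and taking characteristic cycles recovers the formal sum $\sum_\pi a(\pi)\pi$. Since the constant sheaf is canonically $\GC^\vee$-equivariant and depends only on the orbit $\Oo^\vee_{\KC, i}$ (not on an auxiliary choice of strong real form), the resulting virtual character becomes, under the character formula, a distribution invariant under stable conjugacy. The main obstacle is the sign bookkeeping: reconciling the microlocal multiplicity, the orientation-local-system twists in the KLV dictionary, and the integral-length normalization, so that the three combine to produce coefficients which are simultaneously explicit, non-zero, and assemble into a genuinely stable (rather than merely $\GC^\vee$-invariant) character. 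Once this bookkeeping is in place, existence and non-vanishing follow formally from the microlocal index formula.
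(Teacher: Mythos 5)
This theorem is not proved in the paper at all: it is imported verbatim as Theorem~22.7 of Adams--Barbasch--Vogan \cite{\ABV} and used as a black box. So there is no internal argument to compare against, and what you have written is an attempted reconstruction of the ABV proof. As such a reconstruction, your outline has the right ingredients (geometric parameter space at infinitesimal character $\lambda(\Oo^\vee)$, KLV dictionary with $\GC^\vee$-equivariant perverse sheaves, characteristic cycles, microlocal multiplicities), and you correctly identify the microlocal multiplicity as the source of the coefficient $a(\pi)$.

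However, several of the steps you declare ``formal'' or ``built in'' are in fact the deepest results in the ABV machinery, and your sketch would not close without them. First, you slide between two different definitions of the packet: the paper defines $\Pi^u(\xi,\eta^\vee,\Oo^\vee_{\KC^\vee,i})$ via the theta-associated variety $\AVT(\pi^\vee)\supset\overline{\Oo^\vee_{\KC^\vee,i}}$, while your step~2 takes for granted that this is the same as the characteristic cycle of $P(\pi^\vee)$ containing the conormal variety to an orbit in the geometric parameter space. The compatibility of $\AVT$ with the microlocal support of the dual-side sheaf is a genuine theorem (in the Barbasch--Vogan/Schmid--Vilonen circle of ideas), and without invoking it, the equation $a(\pi)=\pm m(\pi)$ has no content; you cannot just \emph{set} $a(\pi)$ this way. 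Second, stability does not follow formally from $\GC^\vee$-equivariance of the constant sheaf --- every irreducible in the KLV basis is already $\GC^\vee$-equivariant, yet the irreducibles individually are certainly not stable. In ABV the stability of $\eta(\Oo^\vee_{\KC,i})$ is equivalent (via their Theorems~18.14 and~19.16) to the statement that the corresponding complete geometric parameter contributes a well-defined \emph{geometric} virtual character, and establishing this is the core of the Langlands--Shelstad-type analysis, not an afterthought. Third, the normalization $a(\pi)=(-1)^{d(\pi)}m(\pi)$ with $d(\pi)$ an ``integral length'' does not match the ABV formula, which involves the dimension of the supporting orbit in the geometric parameter space relative to the dimension of the orbit attached to $\psi$. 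In short: your sketch has the correct geometric picture but is not a proof, and it is not the route the paper takes, since the paper simply cites ABV.
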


Since all the coefficients $a(\pi)$ are non-zero, given a complete Arthur packet, we should be able to compute $\eta(\Oo^\vee_{\KC, i})$. 
\\
Alternately, if we start with a subset of an Arthur packet which does not have a stable sum of virtual characters and we inductively add a representation, from finite set, to this subset checking for stable sums at each step, then, in this scheme, suppose we did not find a stable sum at stage $n$, and we find a stable sum with all non-zero coefficients at stage $n+1$. This would imply that adding these $n+1$ representations to the subset we started with gives us the complete Arthur packet or a better approximation to the Arthur packet than the original subset. In this setup, $ \Pi^u_{\text{icp}}(\Oo_{\KC^\vee, i}^\vee)$ is the subset we want to start with and $\Pi^u_{\text{mis}} (\Oo^\vee)$ is the set from which we add representations. 
\\
Suppose we started out with $\Pi^u_{\text{icp}}(\Oo_{\KC^\vee, j}^\vee)$, $i\neq j$ and repeated the same process as above to compute the unipotent packet $\Pi^u_{\text{icp}}(\Oo_{\KC^\vee, j}^\vee)$, then the representations of $\Pi^u_{\text{mis}}(\Oo^\vee)$ that are in both $\Pi^u_{\text{icp}}(\Oo_{\KC^\vee, i}^\vee)$ and $\Pi^u_{\text{icp}}(\Oo_{\KC^\vee, j}^\vee)$ would contain $ \Oo_{\KC^\vee, i}^\vee$ and $\Oo_{\KC^\vee, j}^\vee)$ in their theta-associated variety, proving that the associated variety is reducible. As we vary over all the theta-forms, we end up computing the associated varieties of all the representations in $\Pi^u_{\text{mis}}(\Oo^\vee)$. 
\\
Work on this is still in progress, a crucial component is the implementation of the computations of stable sum formulas that have been implemented into the Atlas software by Adams. 


\bibliography{MasterRef}

\end{document}